\documentclass[twoside]{article}%

\usepackage{mathtools}
\usepackage{latexsym}
\usepackage{mathrsfs}
\usepackage{listings}
\usepackage{hyperref}
\usepackage{pgfplots}
\usepackage{appendix}
\hypersetup{
  unicode=true,
  pdfauthor={},
  pdftitle={Patterns of Non-Stationary Solutions},
  pdfsubject={},
  pdfkeywords={}; {}
}
\usepackage{pst-all}	%call the pstricks package
\usepackage{xcolor}
\usepackage{enumitem}
\usepackage{todonotes}
\usepackage{amssymb,bm}
\usepackage{amsmath}
\usepackage{amsfonts}
\usepackage{amsthm}
\usepackage{tikz}
\usepackage{epsfig}
\usepackage{verbatim}
\usepackage{float}
\usepackage{tabularx}
\usepackage{bm}
\usepackage{tikz-cd}

\providecommand{\U}[1]{\protect\rule{.1in}{.1in}}

\setlength{\textwidth}{15truecm}
\setlength{\textheight}{21truecm}
\setlength{\oddsidemargin}{.65truecm}
\setlength{\evensidemargin}{.25truecm}
\setlength{\marginparwidth}{2.4truecm}
\setlength{\topmargin}{-.15in}
\newcolumntype{Y}{>{\raggedleft\arraybackslash}X}
\def\bc{{\mathbb{C}}}

\def\bn{{\mathbb{N}}}

\def\br{{\mathbb{R}}}

\def\bz{{\mathbb{Z}}}

\def\br{\mathbb R}

\def\vs{\vskip.3cm}
\def\noi{\noindent}

\def\gdeg{G\text{\rm -deg}}

\def\Om{\Omega}

\def\sign{\text{\rm sign\,}}

\def\ker{\text{\rm Ker\,}}

\DeclareMathOperator{\id}{Id}

\newcommand\cU{\ensuremath{\mathcal U}}

\newrgbcolor{violet}{.6 .1 .8}
  \newrgbcolor{lightyellow}{1 1 .8}
  \newrgbcolor{lightblue}{.80 1 1}
  \newrgbcolor{mygreen}{0 .66 .05}
  \definecolor{mygreen}{rgb}{0,.66,.05}
  \definecolor{lightyellow}{rgb}{1,1,.80}
  \newrgbcolor{orange}{1 .6 0}
  \newrgbcolor{GreenYellow}{.85 1 .31}
  \newrgbcolor{Yellow}{1  1  0}
  \newrgbcolor{Goldenrod}{1  .90  .16}
  \newrgbcolor{Dandelion}{1  .71  .16}
  \newrgbcolor{Apricot}{1  .68  .48}
  \newrgbcolor{Peach}{1  .50  .30}
  \newrgbcolor{Melon}{1  .54  .50}
  \newrgbcolor{YellowOrange}{1  .58  0}
  \newrgbcolor{Orange}{1  .39  .13}
  \newrgbcolor{BurntOrange}{1  .49  0}
  \newrgbcolor{Bittersweet}{1.  .4300  .24}
  \newrgbcolor{RedOrange}{1  .23  .13}
  \newrgbcolor{Mahogany}{1.  .4475  .4345}
  \newrgbcolor{Maroon}{1.  .4084  .5376}
  \newrgbcolor{BrickRed}{1.  .3592  .3232}
  \newrgbcolor{Red}{1  0  0}
  \newrgbcolor{OrangeRed}{1  0  .50}
  \newrgbcolor{RubineRed}{1  0  .87}
  \newrgbcolor{WildStrawberry}{1  .04  .61}
  \newrgbcolor{CarnationPink}{1  .37  1}
  \newrgbcolor{Salmon}{1  .47  .62}
  \newrgbcolor{Magenta}{1  0  1}
  \newrgbcolor{VioletRed}{1  .19  1}
  \newrgbcolor{Rhodamine}{1  .18  1}
  \newrgbcolor{Mulberry}{.6668  .1180  1.}
  \newrgbcolor{RedViolet}{.9538  .4060  1.}
  \newrgbcolor{Fuchsia}{.5676  .1628  1.}
  \newrgbcolor{Lavender}{1  .52  1}
  \newrgbcolor{Thistle}{.88  .41  1}
  \newrgbcolor{Orchid}{.68  .36  1}
  \newrgbcolor{DarkOrchid}{.60  .20  .80}
  \newrgbcolor{Purple}{.55  .14  1}
  \newrgbcolor{Plum}{.50  0  1}
  \newrgbcolor{Violet}{.98 .15 .95}
  \newrgbcolor{RoyalPurple}{.25  .10  1}
  \newrgbcolor{BlueViolet}{.84  .38  .98}
  \newrgbcolor{Periwinkle}{.43  .45  1}
  \newrgbcolor{CadetBlue}{.38  .43  .77}
  \newrgbcolor{CornflowerBlue}{.35  .87  1}
  \newrgbcolor{MidnightBlue}{.4414  .9259  1.}
  \newrgbcolor{NavyBlue}{.06  .46  1}
  \newrgbcolor{RoyalBlue}{0  .50  1}
  \newrgbcolor{Blue}{0  0  1}
  \newrgbcolor{Cerulean}{.06  .89  1}
  \newrgbcolor{Cyan}{0  1  1}
  \newrgbcolor{ProcessBlue}{.04  1  1}
  \newrgbcolor{SkyBlue}{.38  1  .88}
  \newrgbcolor{Turquoise}{.15  1  .80}
  \newrgbcolor{TealBlue}{.1572  1.  .6668}
  \newrgbcolor{Aquamarine}{.18  1  .70}
  \newrgbcolor{BlueGreen}{.15  1  .67}
  \newrgbcolor{Emerald}{0  1  .50}
  \newrgbcolor{JungleGreen}{.01  1  .48}
  \newrgbcolor{SeaGreen}{.31  1  .50}
  \newrgbcolor{Green}{0  1  0}
  \newrgbcolor{ForestGreen}{.1992  1.  .2256}
  \newrgbcolor{PineGreen}{.3100  1.  .5575}
  \newrgbcolor{LimeGreen}{.50  1  0}
  \newrgbcolor{YellowGreen}{.56  1  .26}
  \newrgbcolor{SpringGreen}{.74  1  .24}
  \newrgbcolor{OliveGreen}{.6160  1.  .4300}
  \newrgbcolor{RawSienna}{.53  .28  .16}
  \newrgbcolor{Sepia}{1.  .7510  .70}
  \newrgbcolor{Brown}{.41  .25  .18}
  \newrgbcolor{TAN}{.86  .58  .44}
  \newrgbcolor{Gray}{1.  1.  1.}
  \newrgbcolor{Black}{1  1  1}
  \newrgbcolor{White}{1  1  1}
\definecolor{cadmiumgreen}{rgb}{0.0, 0.42, 0.24}
\definecolor{codegray}{rgb}{0.5,0.5,0.5}

\lstdefinestyle{mystyle}{
%backgroundcolor=\color{backcolour},   
    commentstyle=\color{codegreen},
    keywordstyle=\color{magenta},
    numberstyle=\tiny\color{codegray},
    stringstyle=\color{codepurple},
    basicstyle=\ttfamily\footnotesize,
    breakatwhitespace=false,         
    breaklines=true,                 
    captionpos=b,                    
    keepspaces=true,                 
    numbers=left,                    
    numbersep=5pt,                  
    showspaces=false,                
    showstringspaces=false,
    showtabs=false,                  
    tabsize=2
}
  \lstdefinelanguage{GAP}{
    basicstyle=\ttfamily,
    keywords={true, false, function, return, fail, if, in, while, do, od, else, elif, fi, break, continue},
    keywordstyle=\color{NavyBlue}\bfseries,
    otherkeywords={% Operators
      >, <, ==
    },
    breaklines=true,      
    identifierstyle=\color{black},
    sensitive=True,
    comment=[l]{\#},
    commentstyle=\color{cadmiumgreen},
    stringstyle=\color{black},
    morestring=[b]',
    morestring=[b]"
  }
\lstset{style=mystyle}

\newtheorem{theorem}{Theorem}[section]
\newtheorem{proposition}[theorem]{Proposition}
\newtheorem{lemma}[theorem]{Lemma}
\newtheorem{corollary}[theorem]{Corollary}

\newtheorem{remark}[theorem]{Remark}

\begin{document}

\title{Patterns of Non-Stationary Solutions to Symmetric Systems of Second-Order Differential Equations}

\author{
Ziad Ghanem\thanks{\small Department of Mathematical Sciences, University of Texas at Dallas, Richardson, TX 75080, USA
}}
%}
%}
\date{}

\maketitle

\begin{abstract}
We establish the existence of non-stationary solutions to a symmetric system of second-order autonomous differential equations. Our technique is based on the equivariant degree theory and involves a novel characterization of orbit types of maximal kind in the Burnside Ring product of a finite number of basic degrees for the group $O(2) \times \Gamma \times \mathbb Z_2$.
\end{abstract}

\noi \textbf{Mathematics Subject Classification:} Primary: 34C25, 37C81, 47H11, 55M25, 19A22

\medskip

\noi \textbf{Key Words and Phrases:}  symmetric equation, existence of solutions, equivariant
Leray-Schauder degree, nonlinear analysis, periodic solutions.

\section{Introduction} \label{sec:introduction}
The classical forced pendulum equation $\ddot x = \bm f(x)$ serves as a paradigmatic problem for validating novel techniques in nonlinear analysis (cf. \cite{Mahwin1}-\cite{Mahwin5}). More generally, dynamical systems described by second-order differential equations strike a delicate balance between simplicity and a capacity for capturing rich and intricate nonlinear behavior, making them ideal  candidates for the testing of analytical methods. On the other hand, autonomous differential equations with periodic boundary conditions always admit natural $SO(2)$ symmetries related to time shifting that make them particularly well-suited for a framework based on the equivariant degree theory. Second order autonomous differential equations not involving any first derivative terms, specifically, admit $O(2)$ symmetries, where the reflection generator acts by time reversal. When these equations are subjected to an odd forcing function and arranged in symmetric networks characterized by a finite group $\Gamma$, their full symmetry group becomes
\[
G:= O(2) \times \Gamma \times \bz_2.
\]
Our primary objective is to demonstrate how the $G$-equivariant Leray-Schauder degree can be used to establish the existence of non-stationary solutions to the following $\Gamma$-symmetric system of $p$-periodic, second order, autonomous equations:
\begin{align} \label{eq:system_periodic}
    \begin{cases}
        \ddot x(t) = f(x(t)) + Ax(t), \quad t \in \br, \;  x(t) \in V \\
        x(t) = x(t+ p), \; \dot x(t) = \dot x(t+p),
    \end{cases}
\end{align}
where $V := \br^N$ is an orthogonal $\Gamma$-representation, $A: V \rightarrow V$ is a symmetric $\Gamma$-equivariant matrix and $f: V \rightarrow V$ is a continuous function satisfying the conditions
\begin{enumerate}[label=($A_\arabic*$)]
    \item\label{A1} $f$ is $\Gamma$-equivariant, i.e. $f(\gamma x) = \gamma f(x)$ for all $\gamma \in \Gamma, \; t \in \br, \; x \in V$.
    \item\label{A2} $f$ is odd, i.e. $f(-x) = -f(x)$ for all $x \in V$.
    \item\label{A3} $f(x)$ is $o(|x|)$ as $x$ approaches $0$, i.e.
    \[
    \lim_{x \rightarrow 0} \frac{f(x)}{|x|} = 0.
    \]
    \item \label{A4} there exists a constant $M > 0$ such that for all $x \in V$ with $| x | \geq M$, one has
    \[
    f(x) \cdot x > 0.
    \]
\end{enumerate}
Conditions \ref{A1}--\ref{A2} ensure that the equations admit the symmetries of the product group $O(2) \times \Gamma \times \bz_2$, while conditions \ref{A3} and \ref{A4} imply the well-behavedness of our non-linearity at solutions to \eqref{eq:system_periodic} near zero and infinity, respectively. Specifically, \ref{A2} allows the equivariant degree to distinguish between stationary and non-stationary functions, \ref{A3} guarantees that the nonlinearity does not interact with the spectrum of $A$ and the Nagumo condition \ref{A4} is essential for establishing a-priori bounds on the magnitude of solutions to \eqref{eq:system_periodic}.
\vs
System \eqref{eq:system_periodic} is a natural model for the dynamics of a configuration of $N$ identical pendula, subjected to nonlinear forcing $f$ and with coupling relations specified by the matrix $A$ and the group $\Gamma$. In this context, the vector $x(t) := (x_1(t), x_2(t), \ldots, x_N(t))$ represents the angles
of the pendula from the vertical position, $\dot x(t) := (\dot x_1(t), \dot x_2(t), \ldots, \dot x_N(t))$ the angular velocities and $\ddot x(t) := (\ddot x_1(t), \ddot x_2(t), \ldots, \ddot x_N(t))$ the angular accelerations at any time $t > 0$. 
\vs
Methods based on the Leray-Schauder degree have been used to solve a wide variety of second order differential equations \cite{Amster, Bebernes, Gaines, Hartman, Knobloch, Mahwin3, Mahwin5}. However, the inability of the Leray-Schauder degree to distinguish between periodic solutions that differ by a fixed time shift and differentiate constant solutions from non-constant solutions have hindered its suitability for autonomous differential equations.
The effectiveness of the $G$-equivariant Leray-Schauder degree in detecting non-constant solutions to the system \eqref{eq:system_periodic} for several explicit choices of the symmetry group $\Gamma$ has already been demonstrated by Garcia-Azpetia et al. in the article \cite{Carlos}. More generally, the $G$-equivariant Leray-Schauder degree has been applied to problems admitting appropriate spatial-temporal symmetries \cite{BalBurnett, BalChen, Duan, Eze, Ghanem, Krawcewicz}.
Compared with other techniques to solve systems similar to \eqref{eq:system_periodic}, the equivariant degree approach requires relatively minimal regularity assumptions on the nonlinearity. For example, existence results were obtained in \cite{Amann} by topological index argument and in \cite{Chang} using Morse theory framework, under the assumption that the right-hand-side interacts with  only finitely many eigenvalues of the differential operator.
\vs
A prerequisite for the application of any $G$-equivariant degree-theory based argument to \eqref{eq:system_periodic} is the reformulation of our system as a fixed point equation in an appropriate functional $G$-space with a nonlinear operator in the form of a $G$-equivariant compact perturbation of identity. In \cite{Carlos}, Garcia-Azpetia et al. prove that the problem of finding $p$-periodic solutions to \eqref{eq:system_periodic} is equivalent to the problem of finding zeros of the operator
\[
\mathscr F: \mathscr H \rightarrow \mathscr H, \quad \mathscr F(u) := u - \mathscr L^{-1}(\beta^2 f(u) + \beta^2Au - u), 
\]
where $\beta := \frac{p}{2 \pi}$, $\mathscr H:= H^2_{2\pi}([0,2\pi];V)$ and $\mathscr L: \mathscr H \rightarrow \mathscr H$ is the differential operator $\mathscr L u := \ddot u - u$. 
Under the conditions \ref{A1}---\ref{A4} and provided that the linearization $ D\mathscr F(0): \mathscr H \rightarrow \mathscr H$ is an isomorphism, Garcia-Azpetia et al. additionally demonstrate 
that: 
\begin{enumerate}
    \item[$\rm (i)$] $\mathscr F$ is a $G$-equivariant completely continuous field with respect to the natural isometric action of $G$ on $\mathscr H$;
    \item[$\rm (ii)$] all of the non-trivial solutions to \eqref{eq:system_periodic} can be confined to an annulus of the form $\Om := B_R(\mathscr H) \setminus B_{\varepsilon}(\mathscr H) \subset \mathscr H$ for some sufficiently small choice of $\varepsilon > 0$ and sufficiently large choice of $R > 0$;
    \item[$\rm (iii)$] $(\mathscr F,\Om)$ constitutes an admissible $G$-pair, such that the existence of non-trivial solutions to \eqref{eq:system_periodic} is equivalent to non-triviality of the degree invariant 
    \[
\gdeg(\mathscr F,\Om) = (G) - \gdeg(D \mathscr F(0), B_1(\mathscr H)),
\]
where $\gdeg$ is the $G$-equivariant Leray-Schauder degree and $(G) \in A(G)$ is the unit element of the Burnside ring (cf. Appendix \ref{sec:appendix} for the definition of the Burnside ring $A(G)$, the definition of $G$-admissibility/the set of all admissible $G$-pairs $\mathscr M^G$ and for the construction of the degree $\gdeg:\mathscr M^G \rightarrow A(G)$).
\end{enumerate}
Together with a direct application of the existence property of the $G$-equivariant Leray-Schauder degree (cf. Appendix \ref{sec:appendix}), the above considerations suggest the following sufficient condition for the existence of a non-trivial solution to \eqref{eq:system_periodic}:
\begin{lemma}\label{lemm:sufficient_condition}
If for some orbit type $(H) \in \Phi_0(G) \setminus \{(G) \}$ one has
\[
\operatorname{coeff}^{H}\left( \gdeg(D\mathscr F(0), B(\mathscr H)) \right) \neq 0,
\]
(cf. Appendix \ref{sec:appendix} for definition of the coefficient operator $\operatorname{coeff}^{H}: A(G) \rightarrow \bz$) then there exists a function $u \in \mathscr H \setminus \{0\}$ satisfying \eqref{eq:system} with an isotropy subgroup $G_u \leq G$ satisfying $(G_u) \geq (H)$.   
\end{lemma}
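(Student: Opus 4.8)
The plan is to reduce the statement to a formal manipulation inside the Burnside ring $A(G)$, followed by a single invocation of the existence property of the $G$-equivariant Leray--Schauder degree; no analysis beyond what points $\rm(i)$--$\rm(iii)$ above already supply will be needed. First I would record the consequences of those points: under \ref{A1}--\ref{A4} and the standing hypothesis that $D\mathscr F(0)$ is an isomorphism, the pair $(\mathscr F,\Om)$ with $\Om = B_R(\mathscr H)\setminus B_\ve(\mathscr H)$ is an admissible $G$-pair, every zero of $\mathscr F$ in $\Om$ is a nonzero $p$-periodic solution of \eqref{eq:system_periodic}, and
\[
\gdeg(\mathscr F,\Om) = (G) - \gdeg\bigl(D\mathscr F(0), B_1(\mathscr H)\bigr).
\]

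Next I would apply the $\bz$-linear coefficient operator $\operatorname{coeff}^{H}$ to this identity. Since $(G)$ is the unit element of $A(G)$, it is the single basis element indexed by $(G)$ in the additive basis $\{(K): (K)\in\Phi_0(G)\}$, so $\operatorname{coeff}^{H}\bigl((G)\bigr) = 0$ for every $(H)\in\Phi_0(G)\setminus\{(G)\}$; hence
\[
\operatorname{coeff}^{H}\!\bigl(\gdeg(\mathscr F,\Om)\bigr) = -\operatorname{coeff}^{H}\!\bigl(\gdeg(D\mathscr F(0), B_1(\mathscr H))\bigr),
\]
which is nonzero by assumption. In particular $\gdeg(\mathscr F,\Om)\neq 0$, carrying a nontrivial term at the orbit type $(H)$. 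Then I would invoke the existence property of $\gdeg$ (cf. Appendix \ref{sec:appendix}): since $\operatorname{coeff}^{H}(\gdeg(\mathscr F,\Om))\neq 0$, the set $\mathscr F^{-1}(0)\cap\Om$ contains a point $u$ whose isotropy group satisfies $(G_u)\geq(H)$. Because $\Om\subset\mathscr H\setminus\{0\}$, the function $u$ is nontrivial, and by $\rm(ii)$ it is precisely a nonzero $p$-periodic solution of \eqref{eq:system_periodic} with $(G_u)\geq(H)$, which is the assertion.

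I do not expect a genuine obstacle: the argument is essentially bookkeeping. The only two points that must be respected are that the existence property produces a solution with isotropy \emph{at least} $(H)$ rather than conjugate to $H$ (which is exactly what the conclusion asserts), and that the smallness of $\ve$ and largeness of $R$ furnished by $\rm(ii)$ are what guarantee that $\Om$ captures all nontrivial solutions of \eqref{eq:system_periodic}; both are already in hand from the cited results of Garcia-Azpetia et al.
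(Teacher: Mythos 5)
Your proof is correct and follows exactly the route the paper indicates (it does not spell out a separate proof, presenting the lemma as an immediate consequence of points $\rm(i)$–$\rm(iii)$ together with the existence property of $\gdeg$). Your bookkeeping — applying $\operatorname{coeff}^H$ to $\gdeg(\mathscr F,\Om)=(G)-\gdeg(D\mathscr F(0),B_1(\mathscr H))$, noting $\operatorname{coeff}^H((G))=0$ for $(H)\neq(G)$, and then invoking the existence axiom on $\Om\subset\mathscr H\setminus\{0\}$ — is precisely the intended argument.
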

In this way, the problem of finding $p$-periodic solutions to \eqref{eq:system_periodic} has been reformulated as the problem of computing the coefficients of non-unit orbit types (i.e. orbit types $(H) \in \Phi_0(G) \setminus \{(G)\}$) in the Burnside ring element $\gdeg(D\mathscr F(0), B(\mathscr H))$. Moreover, since the $G$-equivariant Leray-Schauder degree provides a full equivariant classification to the solution set of $\mathscr F(u) = 0, \; u \in \mathscr H$, Lemma \ref{lemm:sufficient_condition} represents a framework for identifying the exact spatio-temporal symmetries of any predicted solution. In order to take advantage of the full power of the $G$-equivariant Leray-Schauder degree for this purpose, we must first identify the irreducible representations of $G$ and describe the $G$-isotypic decomposition of $\mathscr H$.  
\vs
Without specifying the group $\Gamma$, we assume that a complete list of the irreducible $\Gamma$-representations $\{ \mathcal V_j \}_{j=0}^r$ is made available. As a $\Gamma$-representation, the space $V$ is also a natural $\Gamma \times \mathbb{Z}_2$-representation with the $\Gamma \times \mathbb{Z}_2$-isotypic decomposition
\begin{align*}
  V = V_0 \oplus V_1 \oplus \cdots \oplus V_r,      
\end{align*}
%\label{Gamma_Z2_isotypic_decomposition}
where each $\Gamma \times \mathbb{Z}_2$-isotypic component $V_j$ is modeled on the irreducible $\Gamma \times \mathbb{Z}_2$-representation $\mathcal V_j^-$ (here the superscript is meant to indicate that $\mathcal V_j$ has been equipped with antipodal $\mathbb{Z}_2$-action) in the sense that $V_j$ is equivalent to the direct sum of some finite number of copies of $\mathcal V_j^-$, i.e.
\[
V_j \simeq \mathcal V_j^- \oplus \cdots \oplus \mathcal V_j^-.
\]
The exact number of irreducible $\Gamma$-representations $\mathcal V_j^-$ `contained' in the $\Gamma \times \mathbb{Z}_2$-isotypic component $V_j$ is called the \textit{$\mathcal V_j$-isotypic multiplicity} of $V$ and is calculated according to the ratio:
\begin{align}\label{def:isotypic_multiplicity}
  m_j:= \dim V_j / \dim \mathcal{V}_j^-, \quad j \in \{0,1,\ldots,r\}.    
\end{align}
Since $A:V \rightarrow V$ is $\Gamma$-equivariant, one has 
\[
\sigma(A) = \bigcup_{j=0}^r \sigma(A_j), \quad A_j := A|_{V_j}:V_j \rightarrow V_j.
\]
For the sake of simplicity, we choose to sidestep a fixed point argument made by
Garcia-Azpetia et al. in \cite{Carlos} by enforcing the following non-degeneracy assumption:
\begin{enumerate}[label=($A_0$)]
    \item\label{A0} for each $\Gamma$-isotypic component $V_j$, there exists a number $\mu_j \in \br$ with $\mu_j \neq - m^2/\beta^2$ for all $m \in \bn$ such that
    \[
    A_{j} = \mu_j \id|_{V_j}: V_j \rightarrow V_j.
    \]
\end{enumerate}
On the other hand, for each $m \in \bn$, we denote by $\mathcal W_m \simeq \mathbb{C}$ the irreducible $O(2)$-representation equipped with the $m$-folding $O(2)$-action
\begin{align*}
e^{i \theta}w:= e^{i m \theta}w, \quad \kappa w:= \bar{w},  \quad e^{i\theta},\kappa \in O(2),  \; w \in \mathcal W_m,
\end{align*}
(here, $\kappa$ indicates the reflection generator in $O(2)$) and by $\mathcal W_0 \simeq \mathbb{R}$ the irreducible $O(2)$-representation on which $O(2)$ acts trivially such that the space $\mathscr H$ admits the $O(2)$-isotypic decomposition 
\begin{align*}
 \mathscr H := \overline{\bigoplus\limits_{m=0}^\infty \mathscr H_m}, \quad \mathscr H_m  := \{ u \in \mathscr H : u(t) = \cos(mt)a + \sin(mt)b, \; a,b \in V\},
\end{align*} 
where
\[
\mathscr H_m \simeq \mathcal W_m \otimes V, \quad m \in \bn \cup \{0\}.
\]
The $G$-isotypic decomposition of $\mathscr H$ can now be expressed in terms of $G$-isotypic components $\mathscr H_{m,j} \subset \mathscr H$ modeled on the irreducible $G$-representations
\begin{align*}
    \mathcal V_{m,j} := \mathcal W_m \otimes \mathcal{V}_j^-, \quad m \in \bn \cup \{0\}, \; j \in \{0,1,\ldots,r\},
\end{align*}
as follows
\begin{align} 
\label{G-isotypic_decomposition}
   \mathscr H = \bigoplus_{j=0}^r \overline{\bigoplus\limits_{m=0}^\infty \mathscr H_{m,j}}, 
\end{align}
where
\[
\mathscr H_{m,j} := \{ u \in \mathscr H : u(t) = \cos(mt)a + \sin(mt)b, \; a,b \in V_j\}.
\]
We are now in a position to present our main existence results:
\begin{theorem}\label{thm:main_theorem}
Take $m > 0$ and let $(H)$ be a maximal element in the isotropy lattice $\Phi_0(G; \mathscr H_{m} \setminus \{0\})$. Under the assumptions \ref{A0}--\ref{A4}, if the matrix $A:V \rightarrow V$ admits an {\bf odd number} of eigenvalues $\mu_j \in \sigma(A)$ with odd isotypic multiplicity $m_j$ (cf. \eqref{def:isotypic_multiplicity}) satisfying
\[
\mu_j < -\left(\frac{m}{\beta}\right)^2 \text{  and  } 2 \nmid \dim \mathcal V_{m,j}^H,
\]
then there exists a non-stationary solution $u \in \mathscr H \setminus \{0\}$ to the system \eqref{eq:system_periodic} with an isotropy subgroup $G_u \leq G$ satisfying $(G_u) \geq (H)$. 
\end{theorem}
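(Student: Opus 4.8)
The plan is to apply Lemma~\ref{lemm:sufficient_condition} to the prescribed orbit type $(H)$: we must (i) verify that $(H)\neq(G)$ and that $\operatorname{coeff}^{H}\bigl(\gdeg(D\mathscr F(0),B(\mathscr H))\bigr)\neq 0$, and (ii) upgrade the solution produced by the lemma to a \emph{non-stationary} one. For (i)-first-part, note $\mathscr H_m^G=0$ for $m>0$ (the $SO(2)$-action on $\mathcal W_m$ is nontrivial), so $(G)\notin\Phi_0(G;\mathscr H_m\setminus\{0\})$ and the maximal element $(H)$ is automatically non-unit. For (ii), I would isolate the element $g_0:=(e^{i\pi/m},e,-1)\in G$: a time shift by $\pi/m$ sends $\cos(mt),\sin(mt)$ to their negatives, which the factor $-1\in\bz_2$ undoes, so $g_0$ acts as the identity on $\mathscr H_m$; hence $g_0\in G_u$ for every $u\in\mathscr H_m\setminus\{0\}$, and in particular $g_0\in H$. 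If the solution $u$ furnished by Lemma~\ref{lemm:sufficient_condition} were constant, then $O(2)$ would act trivially on it, so $(G_u)\geq (H)$ would force a $G$-conjugate of $g_0$, necessarily of the form $(e^{\pm i\pi/m},e,-1)$ (the $\bz_2$-component is central), to lie in $G_u$; its $\Gamma\times\bz_2$-part $(e,-1)$ then fixes the value of $u$, i.e. $-u=u$, so $u=0$, a contradiction. Thus any solution with isotropy $\geq(H)$ is non-stationary.

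For the degree computation, by assumption \ref{A3} we have $Df(0)=0$, so $D\mathscr F(0)=\mathscr L^{-1}\bigl(\tfrac{d^2}{dt^2}-\beta^2 A\bigr)$; on the isotypic component $\mathscr H_{m',j}\simeq\mathcal V_{m',j}$ it acts as the scalar $\lambda_{m',j}=\frac{(m')^2+\beta^2\mu_j}{(m')^2+1}$, which by \ref{A0} is never zero (so $D\mathscr F(0)$ is an isomorphism and $\gdeg(\mathscr F,\Om)$ is admissible), and which is negative precisely when $\mu_j<-(m'/\beta)^2$. The standard product formula for the $G$-equivariant Leray--Schauder degree of a compact linear perturbation of the identity, combined with \eqref{G-isotypic_decomposition} and the multiplicities \eqref{def:isotypic_multiplicity}, then yields the finite Burnside-ring product
\[
\gdeg\bigl(D\mathscr F(0),B(\mathscr H)\bigr)=\prod_{\substack{m'\geq 0,\ 0\leq j\leq r\\ \mu_j<-(m'/\beta)^2}}\bigl(\gdeg_{\mathcal V_{m',j}}\bigr)^{m_j},\qquad \gdeg_{\mathcal V}:=\gdeg(-\id,B(\mathcal V)).
\]

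It remains to extract $\operatorname{coeff}^{H}$ of this product, which is the heart of the matter. I would first record that for a maximal orbit type $(H)$ one has $|W(H)|=2$ (from the amalgamated-subgroup description of isotropy in $\mathscr H_m$), so the table-of-marks recurrence gives, for each factor, $\operatorname{coeff}^{H}(\gdeg_{\mathcal V_{m',j}})=\tfrac12\bigl((-1)^{\dim\mathcal V_{m',j}^{H}}-(-1)^{\dim\mathcal V_{m',j}^{G}}\bigr)=\tfrac12\bigl(1-(-1)^{\dim\mathcal V_{m',j}^{H}}\bigr)\in\{0,-1\}$, using that the antipodal $\bz_2$ acts on every $\mathcal V_j^-$ without nonzero fixed points so $\dim\mathcal V_{m',j}^{G}=0$. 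Second, since $(H)$ is covered only by $(G)$, a product of two non-unit orbit types never contributes to $\operatorname{coeff}^{H}$, whence $\operatorname{coeff}^{H}(ab)=a_H+b_H+2a_Hb_H\equiv a_H+b_H\pmod 2$; iterating, $\operatorname{coeff}^{H}$ of the product is, modulo $2$, the sum over all relevant $(m',j)$ of $m_j\cdot\tfrac12(1-(-1)^{\dim\mathcal V_{m',j}^{H}})$. The final and essential step — the ``novel characterization'' advertised in the abstract — is to show that every term with $m'\neq m$ is even, i.e. $\dim\mathcal V_{m',j}^{H}$ is even whenever $m'\neq m$: restricting $\mathcal V_{m',j}=\mathcal W_{m'}\otimes\mathcal V_j^-$ to the subgroup $\langle g_0\rangle\leq H$ (with $g_0^2$ generating a $\bz_m\leq H$) kills $\mathcal V_{m',j}^{H}$ unless $m\mid m'$ with $m'/m$ odd, and in that residual case the finer, level-$m$-adapted amalgamated structure of $H$ excludes an odd fixed-point dimension. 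This collapses the sum to $\operatorname{coeff}^{H}(\gdeg(D\mathscr F(0),B(\mathscr H)))\equiv\#\{\,j:\mu_j<-(m/\beta)^2,\ 2\nmid m_j,\ 2\nmid\dim\mathcal V_{m,j}^{H}\,\}\pmod 2$, which is odd by hypothesis; hence the coefficient is nonzero and Lemma~\ref{lemm:sufficient_condition} together with the non-stationarity argument above completes the proof. The main obstacle is exactly this last step: pinning down, via the amalgamated-subgroup combinatorics, which maximal orbit types of $\mathscr H_m$ survive in the level-$m'$ factors and with what fixed-point parity — everything else is either a direct spectral computation or a formal manipulation in $A(G)$.
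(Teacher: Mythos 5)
Your overall scaffold --- reduce to Lemma~\ref{lemm:sufficient_condition}, upgrade to non-stationarity via the element $g_0=(e^{i\pi/m},e,-1)\in H$, and express $\gdeg(D\mathscr F(0),B(\mathscr H))$ as a finite Burnside-ring product of basic degrees --- matches the paper, and your explicit non-stationarity argument is a legitimate (in fact more concrete) version of what the paper asserts in Remark~\ref{rm:nonconstant_solutions} and Lemma~\ref{lem:sufficient_condition_stronger}.

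The gap is in the coefficient extraction, and it is structural. You assert that ``$(H)$ is covered only by $(G)$,'' i.e.\ that $(G)$ is the unique orbit type strictly above $(H)$ in $\Phi_0(G)$. That is false: $(H)$ is maximal only in the \emph{sublattice} $\Phi_0(G;\mathscr H_m\setminus\{0\})$, while in the full $\Phi_0(G)$ the $s$-folded classes $({}^{s}H)=\bigl(\psi_s^{-1}(H)\bigr)$ sit strictly between $(H)$ and $(G)$ whenever $\mathfrak m(H)\mid(s-1)$ or $\mathfrak m(H)\mid(s+1)$ (Theorem~\ref{thm:relations_along_folding_chain}). These $({}^{s}H)$ are isotropy types of $\mathscr H_{sm}$ and hence appear with nonzero coefficient in $\deg_{\mathcal V_{sm,j}}$. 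This breaks both of your formulas: (i) $\operatorname{coeff}^{H}(\deg_{\mathcal V_{m',j}})=\tfrac12\bigl(1-(-1)^{\dim\mathcal V_{m',j}^H}\bigr)$ fails for $m'\neq m$ --- the recurrence must subtract the $({}^{s}H)$ contribution, and the correct statement (paper's \eqref{rm5_f1}) is that $\operatorname{coeff}^{H}(\deg_{\mathcal V_{sm,j}})=0$ for all $s>1$ \emph{regardless} of the parity of $\dim\mathcal V_{sm,j}^H$; and (ii) $\operatorname{coeff}^{H}(ab)\equiv a_H+b_H\pmod 2$ ignores cross terms $({}^{s_1}H)\cdot({}^{s_2}H)$, which by Theorem~\ref{thm:relations_along_folding_chain} and the theorem following it can produce $(H)$ when $\gcd(s_1,s_2)=1$ and the Boolean condition $\mathcal B_{\mathfrak m(H)}$ holds. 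Your proposed ``final step'' --- that $\dim\mathcal V_{m',j}^H$ is even for all $m'\neq m$ --- is therefore not the claim one needs, is not proved by the heuristic you give (the residual case $m'/m$ odd is exactly where it can fail), and is in any case a red herring: the paper's resolution (Theorem~\ref{thm:product_Nbdegs}, Proposition~\ref{prop:main_existence}, Corollary~\ref{cor:main_corollary}) is that the cross-term contributions are always multiples of $2x_0$, so that $\operatorname{coeff}^{H}(\gdeg(\mathscr A,B(\mathscr H)))=-x_0[\,2\nmid\mathfrak n^1(H)\,]+2x_0C$ is nonzero whenever $2\nmid\mathfrak n^1(H)$, independent of what happens at $m'\neq m$. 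Finally, your assumption $|W(H)|=2$ is unjustified; the paper explicitly accommodates $|W(H)|=1$ via $x_0=2$, in which case the coefficient is even and a naive mod-$2$ count would be inconclusive --- the ``$-x_0+2x_0C\neq 0$'' argument is what actually saves the day. In short, you have skipped the $s$-folding classification (Sections~\ref{sec:generator_classification} and~\ref{sec:char_maxorbtyps}), which is the paper's central technical content.
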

\begin{remark} \rm
In the field of equivariant bifurcation theory, the Ize Conjecture (IC) refers to the proposition that every irreducible representation of a compact Lie group admits at least one subgroup with an odd dimensional fixed point space. Although the IC was shown to be false by Lauterbach et al. in \cite{Lauter} via counterexample, the conjecture holds in many cases. For example, in Section \ref{sec:special_case}, we use G.A.P. to numerically verify that the irreducible representations of the group $G = O(2) \times \Gamma \times \bz_2$, with the assignment $\Gamma = D_8$, satisfy the IC.
\end{remark}
The remainder of this paper is organized as follows: 
\vs
In Section \ref{sec:degree_setting}, we prepare the problem \eqref{eq:system_periodic} for application of the $G$-equivariant Leray-Schauder degree. This involves 
a functional reformulation in an appropriate Sobolev space, allowing us to derive a practical formula for a degree invariant, in terms of the Burnside ring product of a finite number of basic degrees defined on the irreducible $G$-representations, whose non-triviality is equivalent to the existence of non-trivial solutions to \eqref{eq:system_periodic}. Using this framework, we analyze the maximal elements in the isotropy lattices $\Phi_0(G; \mathscr H_{m,j}\setminus \{0\})$, called {\it orbit types of maximal kind} in $\Phi_0(G)$. The significance of these orbit types is two-fold: first, the coefficients corresponding to orbit types of maximal kind in the degree invariant are computationally accessible and second, the non-triviality of these coefficients guarantees the existence of non-stationary solutions to \eqref{eq:system_periodic}.
In Section \ref{sec:generator_classification}, we present a classification for the generators of the Burnside ring $A(G)$ according to their lattice relations with respect to the $s$-folding homomorphism \eqref{def:sfolding_burnside} and employ this classification, in Section \ref{sec:char_maxorbtyps}, to derive a practical formula for characterizing the non-triviality of an orbit type of maximal kind in the product of a finite number of basic degrees. Specifically, we find that the contribution of each eigenvalue $\mu_{m,j} \in \sigma(D \mathscr F(0))$ to the appearance of an orbit type of maximal kind $(H)$ in the degree invariant $\gdeg(\mathscr F,\Om)$ depends on $(\rm i)$ the sign of $\mu_{m,j}$ and $(\rm ii)$ the parity of the dimension of the corresponding $H$-fixed-point space $\mathcal V_{m,j}^{H}$. Finally, in Section \ref{sec:n_pendula}, we consider the implications of our main result for dihedral symmetry group $\Gamma = D_N$. For the special case of $D_8$ coupling symmetries, we use GAP to numerically verify $(\rm i)$ the pairwise disjointness of the maximal orbit type sets in each isotropy lattice $\Phi_0(G; \mathscr H_{m,j} \setminus \{0\})$ and $(\rm ii)$ the odd-dimensionality of the fixed point sets $\mathcal V_{m,j}^H$ for every maximal element in $\Phi_0(G; \mathscr H_{m,j} \setminus \{0\})$, allowing us to reformulate Theorem \ref{thm:main_theorem} in Section \ref{sec:special_case} as follows:
\begin{proposition}\label{prop:special_case}
Let $\Gamma = D_8$ and take any fixed $m > 0$.
Under the assumptions \ref{A0}--\ref{A4}, if the $j$-th eigenvalue of the matrix $A$ satisfies
\[
\mu_j < -\left(\frac{m}{\beta}\right)^2,
\]
then, corresponding to every maximal element $(H)$ in the isotropy lattice $\Phi_0(G; \mathscr H_{m,j} \setminus \{0\})$, there exists a non-stationary solution $u \in \mathscr H \setminus \{0\}$ to the system \eqref{eq:system_periodic} with an isotropy subgroup $G_u \leq G$ satisfying $(G_u) \geq (H)$. 
\end{proposition}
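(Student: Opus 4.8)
The plan is to derive Proposition~\ref{prop:special_case} as a corollary of Theorem~\ref{thm:main_theorem} by discharging, for the specific choice $\Gamma = D_8$, the two extra hypotheses that appear in the general statement but are absent from the proposition: namely (a) that the eigenvalues with odd isotypic multiplicity satisfying $\mu_j < -(m/\beta)^2$ contributing to a given maximal orbit type $(H)$ number an \emph{odd} count, and (b) the parity condition $2 \nmid \dim \mathcal V_{m,j}^{H}$. I would open the proof by fixing $m > 0$ and an index $j$ with $\mu_j < -(m/\beta)^2$, and letting $(H)$ be an arbitrary maximal element of $\Phi_0(G; \mathscr H_{m,j} \setminus \{0\})$.

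First I would invoke the GAP-assisted computations announced in Section~\ref{sec:special_case}: for $\Gamma = D_8$, (i) the maximal orbit type sets of the lattices $\Phi_0(G; \mathscr H_{m,k} \setminus \{0\})$ are pairwise disjoint as $k$ ranges over $\{0,1,\dots,r\}$, and (ii) every maximal element $(H) \in \Phi_0(G; \mathscr H_{m,j} \setminus \{0\})$ satisfies $\dim \mathcal V_{m,j}^{H}$ odd. Point (ii) immediately gives the parity condition $2 \nmid \dim \mathcal V_{m,j}^{H}$ in Theorem~\ref{thm:main_theorem}. Point (i) is what lets me control the \emph{count} of contributing eigenvalues: because $(H)$ is maximal in $\Phi_0(G; \mathscr H_{m,j} \setminus \{0\})$ but, by disjointness, is \emph{not} an isotropy type occurring in any $\mathscr H_{m,k}$ with $k \neq j$, the only isotypic component among the $\mathcal V_{m,k}$ whose $H$-fixed-point space can be odd-dimensional and "relevant" to $(H)$ is the component $k = j$ itself. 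Hence among all eigenvalues $\mu_k \in \sigma(A)$ with $\mu_k < -(m/\beta)^2$, exactly one — the eigenvalue $\mu_j$ — satisfies $2 \nmid \dim \mathcal V_{m,k}^{H}$, so the relevant count is $1$, which is odd, and the isotypic multiplicity question becomes irrelevant since a single component is involved. This reduces the hypothesis of Theorem~\ref{thm:main_theorem} for this $(H)$ to precisely the single inequality $\mu_j < -(m/\beta)^2$ assumed in the proposition.

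With the hypotheses of Theorem~\ref{thm:main_theorem} verified for every maximal $(H) \in \Phi_0(G; \mathscr H_{m,j} \setminus \{0\})$, I would then simply apply that theorem to obtain, for each such $(H)$, a non-stationary solution $u \in \mathscr H \setminus \{0\}$ of \eqref{eq:system_periodic} with $(G_u) \geq (H)$; since $m > 0$ this solution is genuinely non-stationary. Quantifying over all maximal elements of the lattice completes the argument.

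The main obstacle I anticipate is not in the logical skeleton above — which is short — but in making the bridge between "$(H)$ is maximal in $\Phi_0(G; \mathscr H_{m,j} \setminus \{0\})$" and "$(H)$ contributes to the degree coefficient \emph{only} through the $j$-th isotypic component" fully rigorous; this requires carefully citing the characterization of orbit types of maximal kind from Section~\ref{sec:char_maxorbtyps}, in particular the statement that $\operatorname{coeff}^{H}$ of the Burnside ring product picks up a nonzero contribution governed by $\operatorname{sign}(\mu_{m,j})$ and the parity of $\dim \mathcal V_{m,j}^{H}$, together with the disjointness fact (i) which guarantees no cross-terms from other components can cancel or reinforce it. One must also confirm that the GAP verification genuinely covers \emph{all} $m > 0$ and all $j$ — presumably because the $O(2)$-folding structure makes the lattices $\Phi_0(G; \mathscr H_{m,j} \setminus \{0\})$ stabilize in $m$ (depending only on $m \bmod$ small integers, or on whether $m$ is such that certain twisted subgroups appear), so that finitely many cases suffice. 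I would flag this stabilization as the one point where the reader is asked to trust the computation in Section~\ref{sec:n_pendula} rather than a hand proof.
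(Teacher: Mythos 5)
Your proposal is correct and follows essentially the same route as the paper. The paper's own proof is a one-line citation of the two GAP-verified facts (odd-dimensionality of $\mathcal V_{m,j}^H$ for each $(H)\in\mathfrak M_{m,j}$, and pairwise disjointness of the $\mathfrak M_{m,j}$), and you have reconstructed precisely the deduction that those facts are meant to encode: disjointness together with the observation that a maximal-kind orbit type $(H)\in\mathfrak M_{m,j}$ cannot lie in $\Phi_0(G;\mathscr H_{m,k}\setminus\{0\})$ for $k\neq j$ (else, being maximal in the ambient lattice, it would be maximal there too, hence in $\mathfrak M_{m,k}$) forces $\mathcal V_{m,k}^H=\{0\}$, so the count in Theorem~\ref{thm:main_theorem} is exactly one. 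Your only loose phrase is that "the isotypic multiplicity question becomes irrelevant since a single component is involved"; more precisely, for $\Gamma=D_N$ the paper has already computed $m_j=1$ for all $j\in\mathfrak J(N)$ (Section~\ref{sec:n_pendula}), which is why the odd-multiplicity clause of Theorem~\ref{thm:main_theorem} is automatically discharged. Your flag about the $s$-folding reduction to $m=1$ is apt and matches the paper's own remark.
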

For convenience, the Appendices include an explanation of notations used, and a brief introduction to the $G$-equivariant Leray-Schauder degree. Readers who are interested in a deeper exposition of these topics are referred to \cite{book-new,AED}.
\section{Setting for the Leray-Schauder $G$-Equivariant Degree} \label{sec:degree_setting}
Notice that, with the substitutions $\beta := p/2\pi$ and $u(t):=x(\beta t)$, the system \eqref{eq:system_periodic} becomes
\begin{align} \label{eq:system}
    \begin{cases}
        \ddot u(t) = \beta^2 f(u(t)) + \beta^2 Au(t) , \quad t \in \br, \; u(t) \in V \\
        u(t) = u(t+ 2\pi), \; \dot u(t) = \dot u(t+2 \pi).
    \end{cases}
\end{align}
Clearly, any $2 \pi$-periodic solution $u(t) : \br \rightarrow V$ for the system \eqref{eq:system} corresponds to the $p$-periodic solution $x(t) = u(t/\beta)$ for our original equation.
\vs
Following \cite{Carlos}, we denote by $\mathscr H$ the Sobolev space of $2 \pi$-periodic, $V$-valued functions
\[
\mathscr H := \{ u : \br \rightarrow V : u(0) = u(2 \pi), \; u|_{[0,2\pi]} \in H^2(S^1;V) \},
\]
equipped with the standard inner product
\[
\langle u,v \rangle := \int_{0}^{2 \pi} (\dot u(t) \bullet \dot v(t) + u(t) \bullet v(t)) dt, \quad u,v \in \mathscr H,
\]
and the isometric $G$-action
\begin{align*} %\label{def:G_action}
(e^{i\theta}, \gamma, \pm 1)u(t) := \pm \gamma u(t + \theta), \quad 
(\kappa, \gamma, \pm 1)u(t) := \pm \gamma u(-t), \quad u \in \mathscr H, \; t \geq 0, 
\end{align*}
where $(e^{i\theta},\gamma,\pm 1), (\kappa,\gamma,\pm 1) \in O(2) \times \Gamma \times \bz_2$. We also consider the differential operator
\begin{align*}
%\label{def:L_operator}
    \mathscr L: \mathscr H \rightarrow \mathscr E, \quad \mathscr Lu := \ddot u - u,
\end{align*}
the Nemytskii operator
\begin{align*}
%\label{def:N_operator}
    N: L^2(S^1;V) \rightarrow L^2(S^1;V), \quad (Nu)(t) :=  f(u(t)),
\end{align*}
and the Banach embedding 
\begin{align*}
%\label{def:j_operator}
    j: \mathscr H \rightarrow L^2(S^1;V), \quad j(u)(t):= u(t).
\end{align*}
Since $\mathscr L:\mathscr H \rightarrow L^2(S^1;V)$ is a linear isomorphism, $N:L^2(S^1;V) \rightarrow L^2(S^1;V)$ is continuous and $j: \mathscr H \rightarrow L^2(S^1;V)$ is compact, the operator $\mathscr F: \mathscr H \rightarrow \mathscr H$ given by
\begin{align} \label{def:F_operator}
    \mathscr F(u) := u - \mathscr L^{-1}(\beta^2 N(j(u)) + \beta^2 A j(u) - j(u)),
\end{align}
is a compact perturbation of the identity on $\mathscr H$ and the system \eqref{eq:system} is equivalent to the operator equation
\begin{align} \label{eq:operator_equation}
    \mathscr F(u) = 0,
\end{align}
in the sense that a function $u \in \mathscr H$ is a solution to \eqref{eq:system} if and only if it satisfies \eqref{eq:operator_equation}. In what follows, \eqref{eq:system} will be called the {\it operator equation} associated with the system \eqref{eq:system}.
\vs
Gracia-Azpetia et al. demonstrate the applicability of the $G$-equivariant Leray-Schauder degree to \eqref{eq:operator_equation} by proving that the assumptions \ref{A1}--\ref{A4} imply that the nonlinear operator \eqref{def:F_operator} is a completely continuous $G$-equivariant field, differentiable at the origin $0 \in \mathscr H$ with
\[
\mathscr Au := \id - \mathscr L^{-1}(\beta^2 Au - u), \quad \mathscr A := D \mathscr F(0): \mathscr H \rightarrow \mathscr H.
\]
\subsection{Towards a Computational Formula for $\gdeg(\mathscr A, B(\mathscr H))$} 
\label{sec:computational_formula}
In order to effectively make use Lemma \ref{lemm:sufficient_condition} to determine the existence of non-trivial solutions to \eqref{eq:system_periodic}, we must derive a practical formula for the computation of the local bifurcation invariant $\gdeg(\mathscr A, B(\mathscr H)) \in A(G)$. Our first step in this direction will be to take advantage of the $G$-isotypic decomposition \eqref{G-isotypic_decomposition} to collect spectral data related to the $G$-equivariant linear operator $\mathscr A: \mathscr H \rightarrow \mathscr H$. 
\vs
By Schur's Lemma, one has
\[
\mathscr A(\mathscr H_{m,j}) \subset \mathscr H_{m,j}, \quad m \in \bn \cup \{0\}, \; j \in \{0,1,\ldots,r\},
\]
such that, adopting the notation
\begin{align*}
    \mathscr A_{m,j} := \mathscr A|_{\mathscr H_{m,j}}: \mathscr H_{m,j} \rightarrow \mathscr H_{m,j},
\end{align*}
the spectrum of $\mathscr A$ admits the following decomposition
\begin{align*} %\label{mathscrA_spectral_decomposition1}
 \sigma (\mathscr A )=  \bigcup\limits_{m=0}^{\infty} \bigcup\limits_{j=0}^{r} \sigma(\mathscr A_{m,j}).
\end{align*}
Under assumption \ref{A0}, the spectrum of each matrix $\mathscr A_{m,j}$ consists of the nonzero eigenvalue
\begin{align}\label{def:operator_eigenvalues}
\mu_{m,j} := 1 + \frac{\beta^2 \mu_j - 1}{1 + m^2} = \frac{m^2 + \beta^2 \mu_j}{1+m^2},    
\end{align}
with multiplicity $m_j \in \bn$ (cf. discussion of isotypic multiplicity \eqref{def:isotypic_multiplicity}).
\vs
The product property of the $G$-equivariant Leray-Schauder degree (cf. Appendix \ref{sec:appendix}) permits us to express the degree $\gdeg(\mathscr A, B(\mathscr H))$ in terms of a Burnside ring product of the $G$-equivariant Leray-Schauder degrees of the various restrictions $\mathscr A_{m,j}: \mathscr H_{m,j} \rightarrow \mathscr H_{m,j}$ of the $G$-equivariant linear isomorphism $\mathscr A: \mathscr H \rightarrow \mathscr H$ to the $G$-isotypic components $\mathscr H_{m,j}$ on their respective open unit balls $B(\mathscr  H_{m,j}):= \{ u \in \mathscr H_{m,j} \; : \; \| u \|_{\mathscr H} < 1 \}$ as follows
\begin{equation}\label{eq:gdegA_product_property_decomp}
  \gdeg(\mathscr A, B(\mathscr H)) = \prod\limits_{j=0}^r \prod\limits_{m=0}^\infty \gdeg( \mathscr A_{m,j}, B(\mathscr H_{m,j})).  
\end{equation}
On the other hand, each degree $\gdeg( \mathscr A_{m,j}, B(\mathscr H_{m,j}))$ is fully specified by the $\mathcal V_j$-isotypic multiplicities $\{ m_j \}_{j=0}^r$ together with the real spectra of $\mathscr A_{m,j}$ according to formula:
\begin{equation}\label{eq:negative_spectrum_gdegA}
\gdeg(\mathscr A_{m,j}, B( \mathscr H_{m,j})) =
\begin{cases}
    (\deg_{\mathcal V_{m,j}})^{m_j} \quad & \text{ if } \mu_{m,j} < 0; \\
    (G) \quad & \text{ otherwise,}
\end{cases}    
\end{equation}
where $\deg_{\mathcal V_{m,j}} \in A(G)$ is the basic degree associated with the irreducible $G$-representation $\mathcal V_{m,j}$ (cf. Appendix \ref{sec:appendix}) and $(G) \in A(G)$ is the unit element of the Burnside 
Ring. In addition, since each basic degree $\deg_{\mathcal V_{m,j}}$ is involutive in the Burnside ring (cf. Appendix \ref{sec:appendix}), one has
\begin{equation}\label{eq:involutive_bdegs_gdegA}
  (\deg_{\mathcal V_{m,j}})^{m_j} =
\begin{cases}
    \deg_{\mathcal V_{m,j}} \quad & \text{ if } 2 \nmid m_j; \\
    (G) \quad & \text{ otherwise.}
\end{cases}   
\end{equation}
Putting together \eqref{eq:gdegA_product_property_decomp}, \eqref{eq:negative_spectrum_gdegA} and \eqref{eq:involutive_bdegs_gdegA}, we introduce some notations to keep track of the indices 
\begin{align*}
%\label{index_1}
   \Sigma := \left\{ (m,j) : m \in \mathbb{N} \cup \{0\}, \; j \in \{0,1,\ldots,r \} \right\},
\end{align*}
which contribute non-trivially to $\gdeg(\mathscr A, B(\mathscr H))$. To begin, the \textit{negative} spectrum of $\mathscr A: \mathscr H \rightarrow \mathscr H$ is accounted for with the index set
\begin{align*}
%\label{index_1}
   \Sigma_{-} := \left\{ (m,j) \in \Sigma :  \mu_{m,j} < 0 \right\}.
\end{align*}
Combining this with formulas \eqref{eq:gdegA_product_property_decomp} and \eqref{eq:negative_spectrum_gdegA} yields
\begin{align*}
    \gdeg(\mathscr A, B(\mathscr H)) \; = \prod\limits_{(m,j) \in \Sigma_{-}} (\deg_{\mathcal V_{m,j}})^{m_j}.
\end{align*}
This computation can be further reduced by discarding the indices $(m,j) \in \Sigma$ associated with even $\mathcal V_j$-isotypic multiplicities $m_j$, whose corresponding basic degrees contribute trivially to the Burnside product \eqref{eq:gdegA_product_property_decomp}. Specifically, we put
\begin{align*}
%\label{def:index_set}
\Sigma_0 := \left\{ (m,j) \in \Sigma_{-} \; : \; 2 \nmid m_j \right\},
\end{align*}
which, together with \eqref{eq:involutive_bdegs_gdegA}, permits us to express $\gdeg(\mathscr A, B(\mathscr H))$ as the Burnside ring product of a finite number of unique basic degrees, as follows
\begin{align} \label{eq:final_computation_gdegA}
\gdeg(\mathscr A, B(\mathscr H)) \; = \prod\limits_{(m,j) \in \Sigma_0} \deg_{\mathcal V_{m,j}}.
\end{align}
\section{Orbit Types of Maximal Kind} \label{sec:orbtyps_maximal}
It is always possible to study the non-triviality of the product of a finite number of Burnside ring elements (such as the product of basic degrees in the expression \eqref{eq:final_computation_gdegA}) as a problem concerning the non-triviality of the product of a finite number of generator pairs $(K),(L) \in \Phi_0(G)$. Keeping in mind that the non-triviality of 
\[
(K) \cdot (L) \in A(G),
\]
is equivalent to the existence of an orbit type $(H) \in \Phi_0(G) \setminus  \{(G) \}$ with $\operatorname{coeff}^{H}((K) \cdot (L)) \neq 0$, notice that the recurrence formula for the Burnside ring product (cf. Appendix \ref{sec:appendix})
\begin{align*} 
 &C^H(K,L) = \\ &\quad\frac{n(H,L) \vert W(L) \vert n(H,K) \vert W(K) \vert - \sum_{(\tilde H) > (H)} C^{\tilde H}(K,L) n(H,\tilde H) \vert W(\tilde H) \vert}{\vert W(H) \vert},   
\end{align*}
(here, we have temporarily adopted the notation $C^H(K,L):= \operatorname{coeff}^{H} ((K) \cdot (L))$ due to constraints on space) is only useful if one first identifies the set of all orbit types $(\tilde H) \in \Phi_0(G) \setminus \{(G)\}$ with $(\tilde H) > (H)$. Therefore, computation of the coefficients $\operatorname{coeff}^{H}( (K) \cdot (L))$ is generally impractical except for those orbit types which are maximal in $\Phi_0(G)$, i.e. for those orbit types $(H) \in \Phi_0(G)$ where $(\tilde H) > (H)$ implies $(\tilde H) = (G)$, in which case the recurrence formula simplifies to 
\[
\operatorname{coeff}^{H}((K) \cdot (L)) = \frac{n(H,L) \vert W(L) \vert n(H,K) \vert W(K) \vert}{\vert W(H) \vert}.
\]
\begin{remark} \rm \label{rm:nonconstant_solutions}
One of the principal difficulties when working with a product group of the form $G = O(2) \times \Gamma \times \bz_2$ is that the conjugacy class of the subgroup $O(2)\times \{e_\Gamma\} \times \{1\} \leq G$ is always a generator of the Burnside ring $A(G)$. Since any orbit type $(H)$ which is maximal in the isotropy lattice $\Phi_0(G;\mathscr H \setminus \{0\})$ must also be maximal in the sublattice $\Phi_0(G;\mathscr H_{0} \setminus \{0\})$, this implies that any non-zero function $u \in \mathscr H$ with an isotropy group $G_u \leq G$ such that $(G_u)$ is a maximal element in $\Phi_0(G;\mathscr H \setminus \{0\})$ must be $O(2)$-invariant. Therefore, any map $u \in \mathscr H$ which is both non-trivial and non-constant has an isotropy group $G_u \leq G$ with a conjugacy class that is maximal in $\Phi_0(G;\mathscr H_{m} \setminus \{0\})$ for some positive $m$. 
\end{remark}
Motivated by Remark \ref{rm:nonconstant_solutions}, the conjugacy class of a subgroup $H \leq G$ is called an {\it orbit type of maximal kind} if it is a maximal element in $\Phi_0(G; \mathscr H_m \setminus \{0\})$ for some $m > 0$. Moreover, we denote by $\mathfrak M$ the set of all orbit types of maximal kind, by $\mathfrak M_m$ the set of maximal elements in $\Phi_0(G;\mathscr H_{m} \setminus \{0\})$ and by $\mathfrak M_{m,j}$ the set of orbit types $\mathfrak M_m  \cap \Phi_0(G; \mathscr H_{m,j} \setminus \{0\})$. Since each orbit type of maximal kind $(H) \in \mathfrak M_m$ is contained in the sublattice of orbit types $\Phi_0(G; \mathscr H_{m,j} \setminus \{0\})$ for at least one $j \in \{0,1,\ldots,r\}$, we have
\begin{align*}
 \mathfrak M = \bigcup_{m > 0} \mathfrak M_m, \quad   \mathfrak M_m = \bigcup_{j=0}^r \mathfrak M_{m,j},
\end{align*}
corresponding to the $\Gamma \times \mathbb{Z}_2$-isotypic decomposition of the $m$-th $O(2)$-isotypic component of $\mathscr H$:
\begin{align*}
    \mathscr H_m = \mathscr H_{m,1} \oplus \mathscr H_{m,2} \oplus \cdots \oplus \mathscr H_{m,r}.
\end{align*}
At this point, we can strengthen the  sufficient condition for the existence of a non-trivial solution to \eqref{eq:system} stated in Lemma \ref{lemm:sufficient_condition} to a sufficient condition for the existence of a non-stationary solution:
\begin{lemma}\label{lem:sufficient_condition_stronger}
If for some orbit type of maximal kind $(H) \in \mathfrak M$ one has
\[
\operatorname{coeff}^{H}\left( \gdeg(\mathscr A, B(\mathscr H)) \right) \neq 0,
\]
then there exists a non-stationary function $u \in \mathscr H$ satisfying \eqref{eq:system}.   
\end{lemma}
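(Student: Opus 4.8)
The plan is to reduce Lemma~\ref{lem:sufficient_condition_stronger} to Lemma~\ref{lemm:sufficient_condition} by showing that a non-trivial solution whose isotropy class dominates an orbit type of maximal kind cannot be stationary. Concretely, suppose $(H) \in \mathfrak M$ satisfies $\operatorname{coeff}^{H}(\gdeg(\mathscr A, B(\mathscr H))) \neq 0$. Since $\gdeg(\mathscr F, \Om) = (G) - \gdeg(\mathscr A, B(\mathscr H))$ and $(H) \neq (G)$ (an orbit type of maximal kind lies in $\Phi_0(G; \mathscr H_m \setminus \{0\})$ for some $m > 0$, hence is a proper subgroup class), we get $\operatorname{coeff}^{H}(\gdeg(\mathscr F,\Om)) \neq 0$. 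Lemma~\ref{lemm:sufficient_condition} then produces $u \in \mathscr H \setminus \{0\}$ solving \eqref{eq:system} with $(G_u) \geq (H)$.

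Next I would argue that such a $u$ is non-stationary. The key observation, already recorded in Remark~\ref{rm:nonconstant_solutions}, is that the constant (stationary) solutions form exactly the $O(2)$-fixed subspace $\mathscr H_0 \subset \mathscr H$: a function $u \in \mathscr H$ is stationary if and only if $O(2) \times \{e_\Gamma\} \times \{1\}$ fixes $u$, i.e. $(G_u) \geq (O(2) \times \{e_\Gamma\} \times \{1\})$. So it suffices to show that $(G_u) \geq (H)$ is incompatible with $u$ being $O(2)$-invariant. Since $(H) \in \mathfrak M_m$ for some $m > 0$, $H$ is the isotropy of some non-zero $w \in \mathscr H_m$; because $\mathscr H_m$ carries the $m$-folding $O(2)$-action with $m \geq 1$, the subgroup $SO(2) \times \{e_\Gamma\} \times \{1\}$ acts on $\mathscr H_m$ without non-zero fixed points, hence $H$ does not contain $SO(2) \times \{e_\Gamma\} \times \{1\}$. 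Therefore $H$ — and a fortiori any subgroup of $G$ whose conjugacy class dominates $(H)$, in particular $G_u$ — cannot contain $O(2) \times \{e_\Gamma\} \times \{1\}$, so $u$ is not constant. Combined with $u \neq 0$, this means $u$ is a non-stationary solution of \eqref{eq:system_periodic} (equivalently \eqref{eq:system}).

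The only genuinely delicate point is the order-theoretic step: from $(G_u) \geq (H)$ and ``$H$ does not contain $O(2) \times \{e_\Gamma\} \times \{1\}$'' one must conclude ``$G_u$ does not contain $O(2) \times \{e_\Gamma\} \times \{1\}$.'' This requires care because $(G_u) \geq (H)$ only says that a conjugate of $H$ is contained in $G_u$, not that $H \leq G_u$. The clean way to handle it is to work directly with the fixed-point-space criterion: $u \in \mathscr H_m \oplus (\text{higher modes})$ is forced by $(G_u) \geq (H)$ to have a non-zero component in some $\mathscr H_{m',j}$ with $m' \geq 1$ (indeed $\mathfrak M_m \subset \Phi_0(G; \mathscr H_m \setminus \{0\})$ tells us $(\mathscr H_m)^H \neq 0$ while the maximality and Remark~\ref{rm:nonconstant_solutions} reasoning pins the support), and any function with a non-zero mode-$m'$ component for $m' \geq 1$ is non-constant. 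Phrasing the argument this way — via the existence of a non-trivial $H$-fixed vector in $\mathscr H_m$ rather than via containment of subgroups — sidesteps the conjugacy subtlety entirely and makes the conclusion immediate from the $O(2)$-isotypic decomposition \eqref{G-isotypic_decomposition}.
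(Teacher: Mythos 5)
The first half of your proposal is fine: passing from $\operatorname{coeff}^{H}(\gdeg(\mathscr A,B(\mathscr H)))\neq 0$ to $\operatorname{coeff}^{H}(\gdeg(\mathscr F,\Om))\neq 0$ and then invoking Lemma \ref{lemm:sufficient_condition} to obtain a non-zero $u$ with $(G_u)\geq (H)$ is exactly what the paper has in mind (the paper leaves the lemma without an explicit proof, expecting the reader to combine Lemma \ref{lemm:sufficient_condition} with Remark \ref{rm:nonconstant_solutions}). The problem is in the second half, and you have actually pointed at it yourself and then not repaired it.

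The sentence ``Therefore $H$ --- and a fortiori any subgroup of $G$ whose conjugacy class dominates $(H)$, in particular $G_u$ --- cannot contain $O(2)\times\{e_\Gamma\}\times\{1\}$'' is backwards. From $(G_u)\geq (H)$ one only gets $gHg^{-1}\leq G_u$ for some $g$, so $G_u$ is \emph{larger} than a conjugate of $H$; a larger subgroup can perfectly well contain both $gHg^{-1}$ and $O(2)\times\{e_\Gamma\}\times\{1\}$, even though $H$ itself does not contain $SO(2)\times\{e_\Gamma\}\times\{1\}$. Knowing that $H$ lacks a subgroup tells you nothing about what $G_u\supseteq gHg^{-1}$ may contain. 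Your proposed repair via fixed-point spaces does not close the gap either: $(G_u)\geq(H)$ places $u$ in $\mathscr H^{gHg^{-1}}$, but $\mathscr H^{gHg^{-1}}$ a priori has a non-trivial intersection with $\mathscr H_0$ (the $O(2)$-action is trivial there, so only the $\Gamma\times\mathbb Z_2$-part of $gHg^{-1}$ constrains it), so nothing forces $u$ to have a non-zero component in a mode $m'\geq 1$. The phrase ``the maximality and Remark~\ref{rm:nonconstant_solutions} reasoning pins the support'' is an assertion, not an argument.

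The step that actually makes the lemma true uses the $\mathbb Z_2$-antipodal part of the action, not $SO(2)$. Any non-zero $w\in\mathscr H_m$ with $m>0$ is fixed by the element $(e^{i\pi/m},e_\Gamma,-1)$, because $-w(t+\pi/m)=w(t)$ for $w(t)=\cos(mt)a+\sin(mt)b$. Hence every orbit type $(H)\in\Phi_0(G;\mathscr H_m\setminus\{0\})$ with $m>0$ has representatives containing an element of the form $(a,e_\Gamma,-1)$ with $a\in O(2)$, and the same is true of every $G$-conjugate $gHg^{-1}$ (conjugation preserves the $\Gamma\times\mathbb Z_2$-component $(e_\Gamma,-1)$). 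On the other hand, the isotropy of a non-zero constant $v\in\mathscr H_0=V$ is the direct product $O(2)\times (\Gamma\times\mathbb Z_2)_v$, and $(e_\Gamma,-1)\notin(\Gamma\times\mathbb Z_2)_v$ since $-v\neq v$. So no such $G_v$ can contain a conjugate of $H$, i.e.\ $(G_v)\not\geq(H)$. Combined with the fact that $\mathscr H^G=\{0\}$ (again because $\mathbb Z_2$ acts antipodally), this shows the $u$ produced by Lemma \ref{lemm:sufficient_condition} cannot be constant. This is the content of Remark \ref{rm:nonconstant_solutions} made precise, and it is the ingredient missing from your write-up.
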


\subsection{The $s$-Folding Homomorphism}\label{sec:sfolding}
In this section, we introduce the concept of $s$-folding, which plays a key role in our study of orbit types of maximal kind. For each $s \in \mathbb{N}$, we define the \textit{$s$-folding homomorphism} by
\begin{align*} 
%\label{def:sfolding_0}
\phi_s: O(2) \rightarrow O(2)/\mathbb{Z}_s \simeq O(2), \quad    \phi_s(e^{i \theta}) := e^{is \theta}, \quad \phi_s(\kappa e^{i \theta}) := \kappa e^{is \theta}.
\end{align*}
There is a natural extension of $\phi_s$ to the Lie group homomorphism $\psi_s: O(2) \times \Gamma \times \mathbb{Z}_2 \rightarrow  O(2) \times \Gamma \times \mathbb{Z}_2$ given by
\begin{align} \label{def:sfolding}
    \psi_s(e^{i \theta},\gamma, \pm 1) := (\phi_s(e^{i \theta}), \gamma, \pm 1), \quad \psi_s(\kappa e^{i \theta},\gamma, \pm 1) := (\phi_s(\kappa e^{i \theta}), \gamma, \pm 1).
\end{align}
In turn, \eqref{def:sfolding} induces the Burnside ring homomorphism $\Psi_s: A(G) \rightarrow A(G)$ defined as follows
\begin{align} 
\label{def:sfolding_burnside}
\Psi_s(H) := ({}^{s}H), \quad {}^{s}H := \psi^{-1}_s(H).
\end{align}
Clearly, one has the following relation between the sets $\mathfrak M_m$ and $\mathfrak M_{sm}$ for all $s > 0$ and $m \geq 0$
\begin{align*} 
%\label{def:sfolding_2}
\Psi_s(\mathfrak M_m) = \mathfrak M_{sm}.
\end{align*}
Zalman et al. hypothesize in \cite{BalChen}---where it serves as an essential component in the proof of their main result---that every orbit type $(H) \in \Phi_0(G; \mathscr H_m) \setminus \{(G)\}$ satisfies the relation 
\begin{align} \label{rel:conjecture}
 ({}^{s} H) > (H) \text{ for all } s \in \bn.   
\end{align}
In this paper, we demonstrate that the relation \eqref{rel:conjecture} does not hold universally. Instead, we prove that the relation between $(H)$ and $({}^{s} H)$ depends both on $(H) \in \Phi_0(G; \mathscr H_m)$ and $s \in \bn$. 
\section{A Classification of Generators for the Burnside ring Associated with the Group $G = O(2) \times \Gamma \times \mathbb{Z}_2$.}\label{sec:generator_classification}
In order to proceed with our computations in the Burnside ring, we must employ the convention of \textit{amalgamated notation}: a shorthand for the specification of subgroups in a product group, first considered by Balanov et al. in \cite{AED}.
It is a well known consequence of Goursat's Lemma (cf. \cite{Goursat})
that any closed subgroup in the product group $G = O(2) \times \Gamma \times \bz_2$ can be identified, up to its conjugacy class in $\Phi_0(G)$, with a quintuple $(K_O, K_\Gamma, L, \varphi_O, \varphi_\Gamma)$ consisting of two subgroups $K_O \leq O(2)$, $K_\Gamma \leq \Gamma \times \mathbb{Z}_2$, a group $L$ and a pair of epimorphisms $\varphi_O: K_O \rightarrow L$, $\varphi_\Gamma: K_\Gamma \rightarrow L$, as follows
\begin{align} \label{def:amalgamated_notation}
    K_O {}^{\varphi_O}\times^{\varphi_\Gamma}_{L} K_\Gamma := \{ (x,y) \in K_O \times K_\Gamma \; : \; \varphi_O(x) = \varphi_\Gamma(y) \}.
\end{align}
Let $H$ be a closed subgroup in $G$ with the {\it amalgamated decomposition} \eqref{def:amalgamated_notation} and denote by $r(H) \in SO(2)$ the rotation generator in $K_O$. Since the order of $\varphi_O(r(H)) \in L$ is invariant under conjugation of $r(H)$ in $K_O$, the following map is well-defined:
\begin{align} \label{def:amalgamated_order_map}
  \mathfrak m : \Phi_0(G) \rightarrow \bn, \quad  \mathfrak m(H):= |\varphi_O(r(H))|.
\end{align}
Given any orbit type $(H) \in \Phi_0(G)$ and a number $s_0 \in \bn$, we will demonstrate how the map \eqref{def:amalgamated_order_map} can be used to describe the set
\begin{align}\label{def:amalgamated_order_set}
\left\{ s \in \bn : ({}^{s}H) \leq ({}^{s_0}H) \right\}.
\end{align}
In turn, for any orbit type of maximal kind $(H) \in \mathfrak M$ and for any pair of generators $(K),(L) \in \Phi_0(G)$ we can use the set \eqref{def:amalgamated_order_set} to characterize the coefficients of $({}^{s}H)$ for all $s \in \bn$ in the Burnside ring product  
\[
(K) \cdot (L) \in A(G).
\]
\begin{remark} \rm
For any orbit type of maximal kind $(H) \in \mathfrak M$, the relation $(H) < (\tilde H)$ implies $(\tilde H) = (G)$ or $(\tilde H) = ({}^{s}H)$ for some $s \in \bn$. Therefore, since satisfaction of the relations $(H) < (K)$ and $(H) < (L)$ is a prerequisite for the non-triviality $\operatorname{coeff}^{H}((K) \cdot (L)) \neq 0$, we can restrict our focus to the coefficients of $({}^{s}H)$ for all $s \in \bn$ in Burnside ring products of the form 
\begin{align*}
%\label{def:amalgamated_order_coefficients}
({}^{s_0}H) \cdot ({}^{s_1}H) \in A(G), \quad s_0, s_1 \in \bn.
\end{align*}   
\end{remark}
\begin{theorem} \label{thm:relations_along_folding_chain}
For any orbit type $(H) \in \Phi_0(G)$ and pair of numbers $s_0,s_1 \in \bn$ with $s_1 \leq s_0$, the corresponding pair of folded orbit types $({}^{s_1}H),({}^{s_0}H) \in \Phi_0(G)$ have the relation $({}^{s_1}H) \leq ({}^{s_0}H)$ if and only if
\begin{align*}
     \mathfrak m(H) \mid \frac{s_0-s_1}{\gcd(s_0,s_1)} \text{ or } \mathfrak m(H) \mid \frac{s_0+s_1}{\gcd(s_0,s_1)} .
\end{align*}
\end{theorem}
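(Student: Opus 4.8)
The plan is to reduce the relation $({}^{s_1}H) \leq ({}^{s_0}H)$ between folded orbit types to an arithmetic condition by tracking what the folding homomorphism does to the amalgamated data of $H$. First I would fix an amalgamated decomposition $H = K_O \,{}^{\varphi_O}\!\times^{\varphi_\Gamma}_{L} K_\Gamma$ and compute ${}^{s}H = \psi_s^{-1}(H)$ explicitly in amalgamated form. The key observation is that $\psi_s$ acts only on the $O(2)$-coordinate via $\phi_s$, leaving the $\Gamma \times \bz_2$-coordinate fixed; hence ${}^{s}H$ has the same $K_\Gamma$ and the same target group $L$ (up to the natural identifications), while its $O(2)$-part is $\phi_s^{-1}(K_O)$ with the composed epimorphism. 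Since $\phi_s^{-1}$ of a subgroup $K_O \leq O(2)$ containing rotations of order dividing some $k$ produces a subgroup whose rotation generator has order $sk/\gcd(s,k)$ or a related quantity, I would compute $\mathfrak m({}^sH)$ in terms of $\mathfrak m(H)$ and $s$. The upshot should be that the $\Gamma\times\bz_2$-side of the amalgamated data is unchanged along the whole folding chain, so the containment $({}^{s_1}H) \leq ({}^{s_0}H)$ is governed entirely by the $O(2)$-side together with the compatibility of the gluing epimorphisms through $L$.

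Next I would translate $({}^{s_1}H) \leq ({}^{s_0}H)$ into the statement that (a conjugate of) $\phi_{s_1}^{-1}(K_O)$ sits inside $\phi_{s_0}^{-1}(K_O)$ with compatible epimorphisms onto $L$. Writing $d = \gcd(s_0,s_1)$, $s_0 = d a_0$, $s_1 = d a_1$ with $\gcd(a_0,a_1)=1$, the subgroup-containment question among preimages under $\phi_{s_1}$ and $\phi_{s_0}$ becomes a question about cyclic (and dihedral) subgroups of $O(2)$: one needs $\phi_{s_1}^{-1}(\bz_{\mathfrak m(H)})$ to be contained in $\phi_{s_0}^{-1}(\bz_{\mathfrak m(H)})$ up to the relevant rotation by an element of $O(2)$, and similarly for the reflection part when $K_O$ is dihedral. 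A direct computation with these preimages — $\phi_{s}^{-1}(\bz_k)$ is $\bz_{sk}$ extended appropriately — shows the rotation parts are nested precisely when $\mathfrak m(H) \mid (a_0 - a_1)$, while allowing the extra freedom of conjugating by a reflection (which is available exactly when the dihedral structure is present, and the hypothesis $(H)\in\Phi_0(G)$ with the $O(2)$-action by time-reversal guarantees the reflection is in play) yields the alternative divisibility $\mathfrak m(H) \mid (a_0 + a_1)$. Substituting $a_0 - a_1 = (s_0-s_1)/d$ and $a_0+a_1 = (s_0+s_1)/d$ gives exactly the stated criterion.

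The main obstacle I anticipate is handling the gluing epimorphisms $\varphi_O, \varphi_\Gamma$ carefully: it is not enough for the $O(2)$-subgroups to be nested as abstract subgroups of $O(2)$; the containment of amalgamated subgroups requires that the epimorphism onto $L$ attached to the smaller subgroup be the restriction (up to conjugacy in $\Gamma\times\bz_2$) of the one attached to the larger. Since folding by $\psi_s$ precomposes $\varphi_O$ with $\phi_s$ but does not alter $\varphi_\Gamma$ or $L$, I expect the compatibility on the $\Gamma$-side to be automatic, so the real content is checking that the $\phi_s$-pullback of the epimorphism behaves well under the rotation/reflection adjustment used to realize the conjugacy — essentially verifying that $\varphi_O \circ \phi_{s_1}$ factors through $\varphi_O \circ \phi_{s_0}$ after the appropriate inner automorphism of $O(2)$, which should come down to the same divisibility bookkeeping. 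A secondary technical point is the degenerate cases $\mathfrak m(H) = 1$ (where $K_O$ contains no nontrivial rotations forced by $L$, i.e. $\varphi_O(r(H))$ is trivial) and the case $s_1 = s_0$; both should be checked to be consistent with the formula, the latter trivially since $\mathfrak m(H) \mid 0$.
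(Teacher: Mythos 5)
Your proposal follows essentially the same route as the paper's proof: pass to the amalgamated decomposition $H = K_O\,{}^{\varphi_O}\!\times^{\varphi_\Gamma}_{L}K_\Gamma$, observe that $\psi_s$ acts only on the $O(2)$-coordinate (precomposing $\varphi_O$ with $\phi_s$ while leaving $K_\Gamma$, $L$ and $\varphi_\Gamma$ untouched), reduce the subconjugacy $({}^{s_1}H) \leq ({}^{s_0}H)$ to the compatibility condition $\varphi_O^{s_1}(a x a^{-1}) = \varphi_O^{s_0}(x)$ checked on the rotation generator of ${}^{\gcd(s_0,s_1)}K_O$, and split into the two cases $a = e_{O(2)}$ and $a = \kappa$ to extract the divisibilities $\mathfrak m(H)\mid(s_0-s_1)/\gcd(s_0,s_1)$ and $\mathfrak m(H)\mid(s_0+s_1)/\gcd(s_0,s_1)$, respectively. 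One small correction to your parenthetical remark: conjugation by a reflection does not require $K_O$ to be dihedral or any special feature of the ambient action --- the conjugating element $a=\kappa$ lies in $O(2)\leq G$ in all cases and satisfies $\kappa r\kappa^{-1}=r^{-1}$ regardless of the structure of $K_O$, which is exactly how the paper obtains the $(s_0+s_1)$ branch.
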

\begin{proof}
Assume that a subgroup representative $H \in (H)$ has the amalgamated decomposition $K_O {}^{\varphi_O}\times^{\varphi_\Gamma}_{L} K_\Gamma$. According to the natural ordering of $\Phi_0(G)$, the relation 
\begin{align} \label{eq:natural_ordering_s_0s}
    ({}^{s_1}H) \leq ({}^{s_0}H)
\end{align}
is equivalent to the existence of an element $g \in G$ for which 
\begin{align} \label{rel:natural_ordering_equiv_s1s2}
    g\,{}^{s_1}Hg^{-1} \leq {}^{s_0}H.
\end{align}
Given any $s \in \bn$, put ${}^{s}K_O := \phi_s^{-1}(K_O)$, $\varphi^s_O := \varphi_O \circ \phi_s:{}^{s}K_O \rightarrow L$ and notice that the subgroup ${}^{s}H \leq G$ has the amalgamated decomposition
    \begin{align*}
        \psi_s^{-1}(K_O {}^{\varphi_O}\times^{\varphi_\Gamma}_{L} K_\Gamma) &= \psi_s^{-1}(\{ (x,y) \in K_O \times K_\Gamma \; : \; \varphi_O(x) = \varphi_\Gamma(y) \}) \\
        &= \{ (\phi_s^{-1}(x),y) \in \phi_s^{-1}(K_O) \times K_\Gamma \; : \; \varphi_O(x) = \varphi_\Gamma(y) \} \\
        &= \{ (x',y) \in {}^{s}K_O \times K_\Gamma \; : \; \varphi^s_O(x') = \varphi_\Gamma(y) \} \\
        &= {}^{s}K_O {}^{\varphi^s_O}\times^{\varphi_\Gamma}_{L} K_\Gamma.
    \end{align*}
On the other hand, and without loss of generality, take $g := (a,e_{\Gamma},1) \in O(2) \times \Gamma \times \mathbb{Z}_2$, put $\tilde{K}_O:= a K_O a^{-1} \leq O(2)$ and define the epimorphism $\tilde{\varphi}_O: \tilde{K}_O \rightarrow L$ by
\[
\tilde{\varphi}_O(x):= \varphi_O(a^{-1} x a), \quad x \in \tilde{K}_O,
\]
so that the subgroup $gHg^{-1} \leq G$ can be expressed, using amalgamated notation, as follows
    \begin{align*}
        gK_O {}^{\varphi_O}\times^{\varphi_\Gamma}_{L} K_\Gamma g^{-1} &= (a,e_\Gamma,1)\{ (x,y) \in K_O \times K_\Gamma \; : \; \varphi_O(x) = \varphi_\Gamma(y) \} (a^{-1},e_\Gamma,1) \\
        &= \{ (a x a^{-1}, y) \in a K_O a^{-1} \times  K_\Gamma \; : \; \varphi_O(x) = \varphi_\Gamma(y) \} \\
        &= \{ (\tilde{x},y) \in \tilde{K}_O \times K_\Gamma  \; : \; \tilde{\varphi}_O(\tilde{x}) = \varphi_\Gamma(y) \} \\
        &= \tilde{K}_O {}^{\tilde{\varphi}_O}\times^{\varphi_\Gamma}_{L} K_\Gamma.
    \end{align*}
With these notations, it becomes clear that the subgroups $ {}^{s_0}H, {}^{s_1}H \leq G$ have the relation
\eqref{rel:natural_ordering_equiv_s1s2} for some $g = (a,\gamma,\pm 1) \in O(2) \times \Gamma \times \bz_2$ if and only if $\varphi^{s_0}_O(axa^{-1}) = \varphi^{s_1}_O(x)$
for every $(x,y) \in {}^{\gcd(s_0,s_1)}K_O \times K_\Gamma$ satisfying $\varphi^{\gcd(s_0,s_1)}_O(x) = \varphi_\Gamma(y)$. In particular, since the $\Gamma$ component of an amalgamated subgroup is unchanged by folding, the relation \eqref{eq:natural_ordering_s_0s} is equivalent to the existence of an element $a \in O(2)$ satisfying
\[
\varphi^{s_1}_O(axa^{-1}) = \varphi^{s_0}_O(x) \text{ for all } x \in {}^{d}K_O, \text{ where } d:= \gcd(s_0,s_1).
\]
In turn, it is sufficient to only consider the behavior of the element $r^{\frac{1}{d}} \in {}^{d}K_O$ where $r$ is the rotation generator in $K_O$ and the two cases $a \in \{\kappa, e_{O(2)} \}$. If $a = \kappa$, then from $\kappa r \kappa = r^{-1}$, one has
\begin{align*}
   \kappa {}^{s_1}H \kappa \leq {}^{s_0}H \iff \varphi_O^{s_1}(\kappa r^{\frac{1}{d}} \kappa) = \varphi_O^{s_0}( r^{\frac{1}{d}} ) \iff \varphi_O(r)^{\frac{s_0+s_1}{d}} = e_L \iff \mathfrak m(H)  \mid \frac{s_0+s_1}{d}.
\end{align*}
Supposing instead that $a = e_{O(2)}$, one has
\begin{align*}
   {}^{s_1}H \leq {}^{s_0}H \iff \varphi_O^{s_1}(r^{\frac{1}{d}}) = \varphi_O^{s_0}( r^{\frac{1}{d}}) \iff \varphi_O(r)^{\frac{s_0-s_1}{d}} = e_L \iff \mathfrak m(H) \mid \frac{s_0-s_1}{d}.
\end{align*}
\end{proof}
\vs
For the sake of simplifying our exposition throughout the remainder of this paper, we employ the {\it Iverson brackets} which map any logical predicate $P$ to the set $\{0,1\}$ according to the rule
\begin{align} \label{notation_iverson}
    [P] := \begin{cases}
        1 \quad & \text{ if } P \text{ is true}; \\
        0 \quad & \text{ otherwise},
    \end{cases}
\end{align}
and for any set of indices $I \subset \bn$, we indicate by $\mathcal B_{\mathfrak m}(I)$ the Boolean expression
\begin{align}
\mathcal B_{\mathfrak m}(I) \equiv \text{ `` } \text{Every pair of indices } & x,y \in I \text{ satisfies either } \\ &\mathfrak m | \frac{(x-y)}{\gcd(x,y)} \text{ or } \mathfrak m | \frac{(x+y)}{\gcd(x,y)}. \text{"} \nonumber 
\end{align}
\begin{theorem}
Let $(H) \in \mathfrak M$ be an orbit type of maximal kind. For any pair of numbers $s_0,s_1 \in \bn$, one has 
\begin{align*}
\operatorname{coeff}^{{}^{s}H}(({}^{s_0}H) \cdot ({}^{s_1}H)) = \begin{cases}
    | W(H) | \quad & \text{ if } s = \gcd(s_0,s_1) \text{ and } [\mathcal B_{\mathfrak m(H)}(\{s_0,s_1\})]; \\
    0 \quad & \text{otherwise},
\end{cases}
\end{align*}
equivalently, for any $s \in \bn$, one has
\begin{align*}
\operatorname{coeff}^{{}^{s}H}(({}^{s_0}H) \cdot ({}^{s_1}H)) = |W(H)|[s = \gcd{(s_0,s_1)}] [\mathcal B_{\mathfrak m(H)}(\{s_0,s_1\})].    
\end{align*}
\end{theorem}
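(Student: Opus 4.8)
The plan is to apply the simplified recurrence formula for coefficients of maximal orbit types (the displayed formula in Section~\ref{sec:orbtyps_maximal}) to the product $({}^{s_0}H)\cdot({}^{s_1}H)$, evaluated at the orbit type $({}^{s}H)$. Since $(H)$ is of maximal kind, every orbit type strictly above $({}^{s}H)$ is either $(G)$ or of the form $({}^{t}H)$ for some $t\in\bn$; consequently the recurrence applied at $({}^{s}H)$ collapses to the product term
\begin{align*}
\operatorname{coeff}^{{}^{s}H}\bigl(({}^{s_0}H)\cdot({}^{s_1}H)\bigr) = \frac{n({}^{s}H,{}^{s_0}H)\,|W({}^{s_0}H)|\,n({}^{s}H,{}^{s_1}H)\,|W({}^{s_1}H)|}{|W({}^{s}H)|},
\end{align*}
provided no orbit type $({}^{t}H)$ with $({}^{t}H)>({}^{s}H)$ contributes — and here one must check that the only relevant candidate giving a nonzero correction is $(G)$, which contributes the unit and is excluded since we look at a non-unit coefficient. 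So the whole problem reduces to computing the four quantities $n({}^{s}H,{}^{s_i}H)$ and the Weyl group orders $|W({}^{s_i}H)|$, $|W({}^{s}H)|$.

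First I would observe that $s$-folding is an automorphism-like operation on the relevant data: $\psi_s$ is a Lie group epimorphism, so $W({}^{s_i}H)$ and $W({}^{s}H)$ are all isomorphic to $W(H)$ (the $\Gamma\times\bz_2$ part is untouched and the $O(2)$ part is only reparametrized by $\phi_s$, which does not change the finiteness/structure of the Weyl group). This gives $|W({}^{s_0}H)|=|W({}^{s_1}H)|=|W({}^{s}H)|=|W(H)|$, so the formula above simplifies to $n({}^{s}H,{}^{s_0}H)\,n({}^{s}H,{}^{s_1}H)\,|W(H)|$. Next I would analyze $n({}^{s}H,{}^{s_i}H)$, the number of distinct subgroups conjugate to ${}^{s}H$ that contain a fixed copy of ${}^{s_i}H$ — equivalently, whether $({}^{s}H)\le({}^{s_i}H)$ holds at all, and if so, how many times. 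Theorem~\ref{thm:relations_along_folding_chain} tells us exactly when $({}^{s_1}H)\le({}^{s_0}H)$: namely when $\mathfrak m(H)\mid\frac{s_0-s_1}{\gcd(s_0,s_1)}$ or $\mathfrak m(H)\mid\frac{s_0+s_1}{\gcd(s_0,s_1)}$. Applying this with the pair $(s_i,s)$ where $s=\gcd(s_0,s_1)$: since $\gcd(s_i,s)=s$, the quotients become $\frac{s_i-s}{s}$ and $\frac{s_i+s}{s}$, and one of these divisibility conditions holds for \emph{both} $i=0,1$ precisely when $\mathcal B_{\mathfrak m(H)}(\{s_0,s_1\})$ holds — this is the bookkeeping identity I expect to be the crux of the argument, reconciling the pairwise condition on $\{s_0,s_1\}$ with the two conditions on $\{s_0,s\}$ and $\{s_1,s\}$ (using that $s\mid s_0$, $s\mid s_1$ and $\gcd(s_0/s, s_1/s)=1$). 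When $s\ne\gcd(s_0,s_1)$, I would argue $({}^{s}H)\not\le({}^{s_i}H)$ for at least one $i$ — because the folding index of a subgroup determines $\mathfrak m$ divisibility rigidly and a strictly finer/coarser fold breaks the containment — forcing the coefficient to vanish; this needs the observation that $n({}^{s}H,{}^{s_i}H)=0$ whenever $({}^{s}H)\not\le({}^{s_i}H)$.

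The remaining point is that when the containments do hold, $n({}^{s}H,{}^{s_i}H)=1$: a copy of ${}^{s_i}H$ is contained in a \emph{unique} conjugate of ${}^{s}H$. I expect this to follow from the fact that ${}^{s}H$ is recoverable from ${}^{s_i}H$ as a specific "closure" — the $O(2)$-rotation part of ${}^{s}H$ is the preimage under $\phi_s$ of a fixed subgroup, hence determined, and the $\Gamma\times\bz_2$ and amalgamation data are literally identical — so there is no freedom to conjugate ${}^{s}H$ nontrivially while still containing the fixed ${}^{s_i}H$. Assembling these: when $s=\gcd(s_0,s_1)$ and $\mathcal B_{\mathfrak m(H)}(\{s_0,s_1\})$ holds, the coefficient is $1\cdot 1\cdot |W(H)| = |W(H)|$; otherwise it is $0$. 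The two displayed formulas in the statement are then just the case-split form and its Iverson-bracket repackaging, which are trivially equivalent. \textbf{The main obstacle} I anticipate is the $n({}^{s}H,{}^{s_i}H)=1$ uniqueness claim together with verifying no higher orbit type $({}^{t}H)$ sneaks into the recurrence as a genuine correction — both require a careful argument that the folding chain above $({}^{s}H)$ is "thin" enough, which should come down to the structure of ${}^{t}H$ forcing $\mathfrak m({}^{t}H)\mid\mathfrak m({}^{s}H)$-type constraints, but the indexing is delicate.
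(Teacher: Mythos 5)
Your approach is close to the paper's in spirit --- both routes ultimately compute the coefficient through the recurrence formula and through Theorem~\ref{thm:relations_along_folding_chain} for the lattice relations among foldings --- but the two points you flag as the ``main obstacle'' are exactly where the paper does something you do not. Rather than starting from the recurrence formula, the paper first uses the \emph{geometric} definition of the Burnside product: $\operatorname{coeff}^{(L)}\bigl(({}^{s_0}H)\cdot({}^{s_1}H)\bigr)$ counts orbits of type $(L)$ in $G/{}^{s_0}H \times G/{}^{s_1}H$, so a nonzero coefficient forces $L = {}^{s_0}H \cap g\,{}^{s_1}Hg^{-1}$ for some $g\in G$. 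Writing out the amalgamated decomposition of this intersection and invoking $\bz_{s_0 n}\cap\bz_{s_1 n} = \bz_{\gcd(s_0,s_1)n}$ together with $D_n = \bz_n \cup \kappa\bz_n$, the $O(2)$-component of any such intersection is pinned down to be (a conjugate of) ${}^{\gcd(s_0,s_1)}K_O$. This single observation simultaneously (i) rules out every $({}^tH)$ with $t\ne\gcd(s_0,s_1)$ as an orbit type in the product --- which is precisely the vanishing of the correction terms in the recurrence sum that your argument needs but does not supply, since $({}^sH)$ is generally not a maximal element of $\Phi_0(G)$ --- and (ii) settles the uniqueness $n({}^sH,{}^{s_i}H)=1$. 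Without this step, invoking the simplified (sum-free) recurrence formula for $({}^sH)$ is not justified.

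There is also an error in the ``bookkeeping identity'' you call the crux. Writing $s = \gcd(s_0,s_1)$ and $a_i = s_i/s$ (so $\gcd(a_0,a_1)=1$), your claim is the biconditional
\begin{equation*}
\bigl([\mathfrak m\mid a_0-1]\ \text{or}\ [\mathfrak m\mid a_0+1]\bigr) \text{ and } \bigl([\mathfrak m\mid a_1-1]\ \text{or}\ [\mathfrak m\mid a_1+1]\bigr) \ \Longleftrightarrow\ [\mathfrak m\mid a_0-a_1]\ \text{or}\ [\mathfrak m\mid a_0+a_1].
\end{equation*}
The forward implication is a short casework, but the converse fails: with $\mathfrak m = 5$, $a_0 = 8$, $a_1 = 3$ one has $5\mid a_0-a_1$ yet $5\nmid a_0\pm 1$. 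So the condition ``$n({}^sH,{}^{s_i}H)\ne 0$ for both $i$'' is strictly stronger than $\mathcal B_{\mathfrak m(H)}(\{s_0,s_1\})$; indeed the paper's own proof works with $\mathcal B_{\mathfrak m(H)}(\{d,s_0,s_1\})$, $d=\gcd(s_0,s_1)$, which is equivalent to the left-hand side above. The direction of the equivalence you actually rely on is the one that does not hold.
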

\begin{proof}
Recall, the integer coefficient $n_L \in \bz$ of an orbit type $(L)$ in the Burnside ring product $({}^{s_0}H) \cdot ({}^{s_1}H)$ is defined as the number of orbits of type $(L)$ contained in the $G$-space $G/{}^{s_0}H \times G/{}^{s_1}H$. In particular, $(L) \in \Phi_0(G)$ is an orbit type in $G/{}^{s_0}H \times G/{}^{s_1}H$ if and only if there exists a group element $g \in G$ such that
\begin{align*}
    L = {}^{s_0}H \cap g\,{}^{s_1}Hg^{-1}.
\end{align*}
Without loss of generality, put $g = (a,e_\Gamma, 1)$ with an arbitrary $a \in O(2)$ and notice that the amalgamated decomposition
\begin{align*}
    {}^{s_0}H \cap & g\,{}^{s_1}Hg^{-1} \\ &=  \left\{ (x,y) \in \left({}^{s_0}K_O \cap a {}^{s_1}K_O a^{-1}\right)  \times K_\Gamma: \varphi_O^{s_0}(x) = \varphi_O^{s_1}(a^{-1}xa) = \varphi_\Gamma(y)  \right\} \\
    &= \left({}^{s_0}K_O \cap {}^{s_1} \tilde K_O\right) {}^{\psi}\times_L^{\varphi_\Gamma} K_\Gamma, \quad (\text{where } {}^{s_1} \tilde K_O := a{}^{s_1}K_O a^{-1})
\end{align*}
describes a subgroup belonging to the conjugacy class $({}^{s}H)$ if and only if there exists a group element $b \in O(2)$ with both
\begin{enumerate}
    \item[(i)]  $\varphi_O(b x b^{-1}) = \varphi_O^{s_0}(x) = \varphi_O^{s_1}(a x a^{-1}); \text{ for all } x \in K_O,$
    \item[(ii)] $b {}^{s}K_O b^{-1} = {}^{s_0}K_O \cap a {}^{s_1}K_O a^{-1}.$
\end{enumerate}
The equivalence of condition $(\rm i)$ with the truthfulness of the Boolean expression $\mathcal B_{\mathfrak m(H)}(\{s,s_0,s_1\})$ has already been demonstrated in the proof of Theorem \ref{thm:relations_along_folding_chain}. On the other hand, the observation
\[
K_O \in \{O(2), SO(2), D_n, \bz_n, n \in \bn \},
\]
together with the facts 
\[
\bz_{s_0 n} \cap \bz_{s_1 n} = \bz_{\gcd(s_1,s_0) n} \text{ and } D_n = \bz_n \cup \kappa \bz_n,
\]
implies that condition $( \rm ii)$ can be equated with $s = \gcd(s_0,s_1)$
(cf. \cite{book-new}, Example 3.12). 
\vs
For convenience, put $d:= \gcd(s_0,s_1)$ and notice that, in the case one has $\mathcal B_{\mathfrak m(H)}(\{d,s_0,s_1\})$, the coefficient $n_{d} \in \bz$ standing next to the orbit type $({}^{d}H)$ in the Burnside ring product $({}^{s_0}H) \cdot ({}^{s_1}H) \in A(G)$ can be obtained with the recurrence formula (cf. Appendix \ref{sec:appendix}) as follows
\begin{align*}
 n_{d} = \frac{n({}^{d}H,{}^{s_0}H) \vert W({}^{s_0}H) \vert n({}^{d}H,{}^{s_1}H) \vert W({}^{s_1}H) \vert }{\vert W({}^{d}H) \vert},   
\end{align*}
and the result follows from the fact $\vert W(H) \vert = \vert W({}^{s}H) \vert$ for any $s \in \bn$ together with the observation that $\mathcal B_{\mathfrak m(H)}(\{d,s_0,s_1\})$ implies $n({}^{d}H,{}^{s_0}H) = n({}^{d}H,{}^{s_1}H) = 1$.
\end{proof}
\section{A Characterization of the Orbit 
Types of Maximal Kind in the Product of Basic Degrees.} \label{sec:char_maxorbtyps}
Since the classification put forward in the previous section fully characterizes the orbit types $(\tilde H)$ with $(\tilde H) > (H)$ for any orbit type of maximal kind $(H) \in \mathfrak M$, we are now in a position to calculate their coefficients in Burnside ring products of the form \eqref{eq:final_computation_gdegA}.
\vs
Given a basic degree $\deg_{\mathcal V_{m,j}} \in A(G)$ corresponding to an index pair $(m,j) \in \Sigma$ with $m >0$ and an orbit type of maximal kind $(H) \in \mathfrak M$, the recurrence formula for the $G$-equivariant Leray-Schauder degree (cf. Appendix \ref{sec:appendix}) implies
\begin{align} \label{rm5_f1}
    \deg_{\mathcal V_{m,j}} = (G) - y_{m,j}(H) + a_{m,j}, 
\end{align}
where $a_{m,j} \in A(G)$ is such that $\operatorname{coeff}^{{}^{s}H}(a_{m,j}) = 0$ for all $s \in \mathbb{N}$ and the coefficient $y_{m,j} \in \mathbb{Z}$ is determined by the rule
 \begin{align} \label{rm5_f2}
    y_{m,j} =
\begin{cases}
    0 \quad & \text{if } \dim{\mathcal V_{m,j}^{H}} \text{ is even};\\
    1 \quad & \text{if } \dim{\mathcal V_{m,j}^{H}} \text{ is odd and } \vert W(H) \vert = 2; \\
    2 \quad & \text{if } \dim{\mathcal V_{m,j}^{H}} \text{ is odd and } \vert W(H) \vert = 1,
\end{cases}
\end{align}
or, equivalently (see discussion of the Iverson bracket notation \eqref{notation_iverson}):
\begin{align*} 
%\label{rm5_f3}
    y_{m,j} = x_0\left[2 \nmid \dim{\mathcal V_{m,j}^{H}}\right],
\end{align*}
where
 \begin{align*} 
 %\label{rm5_f4}
    x_0 =
\begin{cases}
    1 \quad & \text{if } \vert W(H) \vert = 2; \\
    2 \quad & \text{if } \vert W(H) \vert = 1.
\end{cases}
\end{align*}
\begin{lemma}
Let $(H) \in \mathfrak M$ be an orbit type of maximal kind. For any basic degree $\deg_{\mathcal V_{m,j}}$ with $(m,j) \in \Sigma$ and $m > 0$ one has the implication
\[
\operatorname{coeff}^{H}(\deg_{\mathcal V_{m,j}}) \neq 0 \implies (H) \in \mathfrak M_{m,j}.
\]
\end{lemma}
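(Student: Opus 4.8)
The plan is to read $\operatorname{coeff}^{H}(\deg_{\mathcal V_{m,j}})$ directly off the local expansion \eqref{rm5_f1}, convert the resulting parity condition into the existence of an $H$-fixed vector in $\mathscr H_{m,j}$, and then use the maximal-kind hypothesis to identify $(H)$ itself as an isotropy type occurring in $\mathscr H_m$. First I would record that $(H)\neq(G)$: since $(H)$ is of maximal kind it is a maximal element of $\Phi_0(G;\mathscr H_{m_0}\setminus\{0\})$ for some $m_0>0$, and $SO(2)$ -- hence $G$ -- has no nonzero fixed vector in $\mathscr H_{m_0}$ when $m_0>0$, so $(G)\notin\Phi_0(G;\mathscr H_{m_0}\setminus\{0\})$ and in particular $(H)\neq(G)$; thus $\operatorname{coeff}^{H}((G))=0$. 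Applying $\operatorname{coeff}^{H}$ to \eqref{rm5_f1}, and using that $\operatorname{coeff}^{{}^{s}H}(a_{m,j})=0$ for all $s\in\bn$ -- in particular for $s=1$, where ${}^{1}H=H$ -- one gets
\[
\operatorname{coeff}^{H}(\deg_{\mathcal V_{m,j}}) \;=\; -\,y_{m,j}.
\]
Since $x_0\in\{1,2\}$ is never zero, \eqref{rm5_f2} then shows that $\operatorname{coeff}^{H}(\deg_{\mathcal V_{m,j}})\neq 0$ holds if and only if $\dim\mathcal V_{m,j}^{H}$ is odd.

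Second, I would transfer this back to $\mathscr H$. An odd -- hence strictly positive -- value of $\dim\mathcal V_{m,j}^{H}$ forces $\mathcal V_{m,j}^{H}\neq\{0\}$, and because $\mathscr H_{m,j}$ is equivalent to a direct sum of $m_j$ copies of $\mathcal V_{m,j}$, also $\mathscr H_{m,j}^{H}\neq\{0\}$. Fixing $u\in\mathscr H_{m,j}\setminus\{0\}$ with $H\leq G_u$, the isotropy class $(G_u)$ belongs to $\Phi_0(G;\mathscr H_{m,j}\setminus\{0\})\subseteq\Phi_0(G;\mathscr H_{m}\setminus\{0\})$ and satisfies $(H)\leq(G_u)$. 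Consequently it suffices to prove that $(H)=(G_u)$, equivalently that $(H)$ is a maximal element of $\Phi_0(G;\mathscr H_{m}\setminus\{0\})$: once this is known, $(H)=(G_u)\in\Phi_0(G;\mathscr H_{m,j}\setminus\{0\})$ is simultaneously a member of $\mathfrak M_m$, so $(H)\in\mathfrak M_m\cap\Phi_0(G;\mathscr H_{m,j}\setminus\{0\})=\mathfrak M_{m,j}$ as required.

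I expect this last reduction to be the crux. An orbit type of maximal kind is, by definition, maximal in $\Phi_0(G;\mathscr H_{m_0}\setminus\{0\})$ for only one value $m_0$, and it has to be propagated to the -- a priori different -- lattice $\Phi_0(G;\mathscr H_{m}\setminus\{0\})$ in which we have just placed $(H)$. My approach would be to use the $s$-folding machinery of Section~\ref{sec:sfolding} together with Theorem~\ref{thm:relations_along_folding_chain}: writing $(H)=({}^{m_0}L)$ with $(L)\in\mathfrak M_1$ (via $\Psi_s(\mathfrak M_{m'})=\mathfrak M_{sm'}$, which gives $\mathfrak M_{m_0}=\Psi_{m_0}(\mathfrak M_1)$, and with the set map $(K)\mapsto({}^{s}K)$ injective since $\psi_s$ is surjective) and $(G_u)=({}^{m}M)$ with $(M)$ an isotropy type of $\mathscr H_1$, the goal becomes to rule out a strict relation $({}^{m_0}L)<({}^{m}M)$. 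Here one exploits that $\mathfrak m(\cdot)$ is unchanged by folding and that Theorem~\ref{thm:relations_along_folding_chain} pins down exactly which foldings of a subgroup are comparable, tracking rotation orders; the upshot should be that such a strict containment would descend to an isotropy type of $\mathscr H_1$ lying strictly above $(L)$, contradicting $(L)\in\mathfrak M_1$. Once $(H)=(G_u)$ is established, $(H)\in\mathfrak M_{m,j}$ is immediate from the definitions, and everything outside this folding argument is routine bookkeeping.
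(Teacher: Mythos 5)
Your first two steps match the paper's intent (read $\operatorname{coeff}^{H}$ off \eqref{rm5_f1}--\eqref{rm5_f2}, convert to $\mathcal V_{m,j}^H\neq\{0\}$, and lift to $\mathscr H_{m,j}^H\neq\{0\}$), and you are right that the reduction to showing $(H)=(G_u)$, i.e.\ $(H)\in\mathfrak M_m$, is where all the work is. But your plan for that final step cannot work, because the intermediate claim it rests on --- ``$\dim\mathcal V_{m,j}^H$ odd implies $(H)\in\mathfrak M_{m,j}$'' --- is simply false. Take $(H)\in\mathfrak M_{m_0}$ with $\mathfrak m(H)=2$ (e.g.\ $H=G_u$ for $u(t)=\cos(m_0t)a$ with $a\in V_0^\Gamma$; the $O(2)$-component of $H$ is the full dihedral $D_{2m_0}$ amalgamated over $\mathbb Z_2$ via the twist $(e^{\pi i/m_0},e_\Gamma,-1)$). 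For any odd $s\geq 3$ one checks directly that $(e^{\pi i/m_0},e_\Gamma,-1)$ and $(e^{2\pi i/m_0},e_\Gamma,1)$ act trivially on $\mathcal W_{sm_0}\otimes\mathcal V_0^-$ and $\kappa$ fixes the real line, so $\dim\mathcal V_{sm_0,0}^{H}=1$ is odd; yet $H\cap SO(2)=\mathbb Z_{m_0}\neq\mathbb Z_{sm_0}$, so $(H)\notin\Phi_0(G;\mathscr H_{sm_0})$ and a fortiori $(H)\notin\mathfrak M_{sm_0,0}$. Thus your Step~2 genuinely produces a nonzero $u\in\mathscr H_{sm_0,0}^H$, but the relation $(H)<(G_u)$ is \emph{strict} and cannot be ruled out by any folding argument --- it is real.

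What saves the lemma is not the parity bookkeeping but the recurrence formula itself: in the example above, $({}^sH)>(H)$ (since $\mathfrak m(H)=2\mid s\pm1$), $({}^sH)\in\mathfrak M_{sm_0}$, and the term $n_{{}^sH}\,n(H,{}^sH)\,|W({}^sH)|$ in the recurrence exactly cancels the leading $(-1)^{\dim\mathcal V_{sm_0,0}^H}-1$, giving $\operatorname{coeff}^{H}(\deg_{\mathcal V_{sm_0,0}})=0$ after all. Equivalently, the expansion \eqref{rm5_f1} is only valid when $(H)\in\mathfrak M_m$ for \emph{the same} $m$ as the basic degree; for $m\neq m_0$ the coefficient of $(H)$ in $\deg_{\mathcal V_{m,j}}$ is not $-y_{m,j}$. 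Your proof applies \eqref{rm5_f1} without this caveat, as the paper's statement of \eqref{rm5_f1}--\eqref{rm5_f2} invites one to do, and then aims to close the gap by showing $(H)$ is maximal in $\Phi_0(G;\mathscr H_m\setminus\{0\})$; but that is exactly what fails. A correct argument must either (i) run the recurrence honestly and exhibit the cancellation via the chain $(H)<({}^sH)<(G)$ and Theorem~\ref{thm:relations_along_folding_chain}, or (ii) use $\Psi_s$ to transport the problem to $\deg_{\mathcal V_{m_0,j}}$ where $(H)\in\mathfrak M_{m_0}$ genuinely holds and \eqref{rm5_f1} is safe, and then observe that the only $({}^sL)$ equal to $(H)$ with $(L)$ an isotropy type is $(L)=(H)$ itself with $s=1$. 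Either way, the ``folding descent'' you sketch, which tries to promote $(H)$ into $\Phi_0(G;\mathscr H_m)$, is a wrong approach: $(H)$ may never belong to that lattice even though $\mathscr H_{m,j}^H\neq\{0\}$.
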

\begin{proof}
Since $\mathcal V_{m,j}^{H} = \{0\}$ for {\it any} orbit type $(H) \notin \Phi_0(G, \mathscr H_{m,j})$, the proof follows from formula \eqref{rm5_f2} together with the assumption that $(H)$ is of maximal kind.
\end{proof}
\vs
For every $s \in \mathbb{N}$, the $s$-folding homomorphism \eqref{def:sfolding} induces the following relation between the basic degrees $\deg_{\mathcal V_{m,j}}, \deg_{\mathcal V_{sm,j}} \in A(G)$
\begin{align}\label{rm5_f5}
    \Psi_s(\deg_{\mathcal V_{m,j}}) = \deg_{\mathcal V_{sm,j}},
\end{align}
such that, for any orbit type $(H) \in \Phi_0(G)$, one has
\begin{align} \label{rm5_f6}
    \operatorname{coeff}^{H}(\deg_{\mathcal V_{m,j}}) = \operatorname{coeff}^{{}^{s}H}(\deg_{\mathcal V_{sm,j}}).
\end{align}
Using the observations \eqref{rm5_f1}, \eqref{rm5_f5}, and \eqref{rm5_f6}, we will derive results which characterize the behavior of orbit types of maximal kind in the Burnside ring product of a finite collection of basic degrees of the form
\begin{align} \label{def:finite_collection_bdeg}
    \left\{ \deg_{\mathcal V_{s_km, j_k}} \in A(G)  :  s_k \in \mathbb{N} \right\}_{k=1,2,\ldots,N}, \quad m > 0, \; j_k \in \{0,1,\ldots,r\}.
\end{align}
To begin with, we recall a result that the coefficient of an orbit type of maximal kind $(H) \in \mathfrak M$ is \textit{$2$-nilpotent} with respect to the Burnside ring product of any pair of basic degrees $\deg_{\mathcal V_{m,j_1}}, \deg_{\mathcal V_{m,j_2}} \in A(G)$ with $m > 0$ and $j_1,j_2 \in \{0,1,\ldots,r\}$, provided that both $\dim\mathcal V_{m,j_1}^{H}$ and $\dim\mathcal V_{m,j_2}^{H}$ are odd. Although this fact has been demonstrated elsewhere (cf. for example \cite{book-new}, \cite{AED}, \cite{BalChen}, \cite{Carlos}), the method of its proof is instructive for the proofs of subsequent, novel results and for this reason we include our own version of the argument.
\begin{lemma} \label{lemm:product_2bdegs_fixed_folding}
Let $(H) \in \mathfrak M$ be an orbit type of maximal kind. For any $m > 0$ and $j_1,j_2 \in \{0,1,\ldots,r\}$, one has 
\begin{align*}
    \operatorname{coeff}^{H}(\deg_{\mathcal V_{m,j_1}} \cdot \deg_{\mathcal V_{m,j_2}} ) =
        \begin{cases}
                0 \quad & \text{if } \dim\mathcal V_{m,j_1}^{H} \text{ and } \dim\mathcal V_{m,j_2}^{H} \\ & \quad \text{ are of the same parity;} \\
                - x_0 \quad & \text{else,}
            \end{cases}
    \end{align*}
    or, equivalently
    \begin{align*}
    \operatorname{coeff}^{H}(\deg_{\mathcal V_{m,j_1}} \cdot \deg_{\mathcal V_{m,j_2}} ) = -x_0\left[2 \nmid \dim\mathcal V_{m,j_1}^{H} + \dim\mathcal V_{m,j_2}^{H}\right].
    \end{align*}
\end{lemma}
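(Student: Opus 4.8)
The plan is to expand both basic degrees using \eqref{rm5_f1} and extract $\operatorname{coeff}^{H}$ of their product by bilinearity of the Burnside ring multiplication, exploiting the maximal-kind hypothesis to sharply restrict which orbit types can contribute. First I would record the coefficients of each $\deg_{\mathcal V_{m,j_i}}$ at the orbit types lying at or above $(H)$. Since $(H)$ is of maximal kind it belongs to $\Phi_0(G;\mathscr H_m\setminus\{0\})$ for some $m>0$; in particular $(H)\ne(G)$, and, as observed earlier, the only orbit types $(\tilde H)$ with $(\tilde H)>(H)$ are $(G)$ and the folds $({}^{s}H)$, $s\in\bn$. Writing $\deg_{\mathcal V_{m,j_i}}=(G)-y_{m,j_i}(H)+a_{m,j_i}$ as in \eqref{rm5_f1} and using $\operatorname{coeff}^{{}^{s}H}(a_{m,j_i})=0$ for every $s\in\bn$, one reads off $\operatorname{coeff}^{H}(\deg_{\mathcal V_{m,j_i}})=-y_{m,j_i}$, then $\operatorname{coeff}^{{}^{s}H}(\deg_{\mathcal V_{m,j_i}})=0$ whenever $({}^{s}H)\ne(H)$, and finally $\operatorname{coeff}^{G}(\deg_{\mathcal V_{m,j_i}})=(-1)^{\dim\mathcal V_{m,j_i}^{G}}=1$, the last equality because $m>0$ forces $\mathcal V_{m,j_i}^{G}\subseteq\mathcal V_{m,j_i}^{O(2)}=\{0\}$.

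Next I would compute $\operatorname{coeff}^{H}(\deg_{\mathcal V_{m,j_1}}\cdot\deg_{\mathcal V_{m,j_2}})$ by expressing each factor in the additive basis $\{(L):(L)\in\Phi_0(G)\}$ and summing $\operatorname{coeff}^{L_1}(\deg_{\mathcal V_{m,j_1}})\operatorname{coeff}^{L_2}(\deg_{\mathcal V_{m,j_2}})\operatorname{coeff}^{H}((L_1)\cdot(L_2))$ over all pairs $(L_1),(L_2)$. A term vanishes unless $H$ is sub-conjugate to both $L_1$ and $L_2$ (otherwise $G/L_1\times G/L_2$ contains no orbit of type $(H)$), i.e. unless $(H)\le(L_1)$ and $(H)\le(L_2)$; by the first paragraph the only such $(L_i)$ carrying a nonzero coefficient are $(H)$ and $(G)$, so only four terms survive. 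Since $(G)$ is the unit, $\operatorname{coeff}^{H}((G)\cdot(G))=0$ and $\operatorname{coeff}^{H}((G)\cdot(H))=\operatorname{coeff}^{H}((H))=1$, while $\operatorname{coeff}^{H}((H)\cdot(H))=|W(H)|$: every isotropy group of a point of $G/H\times G/H$ is sub-conjugate to $H$, so no orbit type strictly above $(H)$ occurs in $(H)\cdot(H)$ and the recurrence formula collapses to $\operatorname{coeff}^{H}((H)\cdot(H))=n(H,H)^2|W(H)|=|W(H)|$ (using $n(H,H)=1$). Collecting the four contributions yields
\[
\operatorname{coeff}^{H}(\deg_{\mathcal V_{m,j_1}}\cdot\deg_{\mathcal V_{m,j_2}})=-y_{m,j_1}-y_{m,j_2}+y_{m,j_1}y_{m,j_2}\,|W(H)|.
\]

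It then remains to substitute $y_{m,j_i}=x_0[2\nmid\dim\mathcal V_{m,j_i}^{H}]$ and use $x_0|W(H)|=2$, which holds in both cases $|W(H)|\in\{1,2\}$. Setting $p:=[2\nmid\dim\mathcal V_{m,j_1}^{H}]$ and $q:=[2\nmid\dim\mathcal V_{m,j_2}^{H}]$, the right-hand side above becomes $x_0(2pq-p-q)$, which equals $0$ when $p=q$ and $-x_0$ when $p\ne q$; since $p\ne q$ exactly when $\dim\mathcal V_{m,j_1}^{H}+\dim\mathcal V_{m,j_2}^{H}$ is odd, this is precisely $-x_0[2\nmid\dim\mathcal V_{m,j_1}^{H}+\dim\mathcal V_{m,j_2}^{H}]$, matching both displayed forms of the formula. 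The main obstacle is the bookkeeping of the second paragraph: one must be sure that nothing above $(H)$ other than $(G)$ and the folds can interfere, that the folds carry a zero coefficient in each $\deg_{\mathcal V_{m,j_i}}$, and that $\operatorname{coeff}^{H}((H)\cdot(H))=|W(H)|$. All three reduce to the maximal-kind property together with the elementary fact that the isotropy group of any point of $G/L_1\times G/L_2$ is sub-conjugate to both $L_1$ and $L_2$; once these are in place, the remainder is routine arithmetic.
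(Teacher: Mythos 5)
Your proof is correct and follows essentially the same route as the paper's: expand each basic degree as $(G)-y_{m,j_i}(H)+a_{m,j_i}$ via \eqref{rm5_f1}, multiply out to get $\operatorname{coeff}^{H}= -y_{m,j_1}-y_{m,j_2}+y_{m,j_1}y_{m,j_2}|W(H)|$, and finish by arithmetic using $x_0|W(H)|=2$. You supply a few justifications the paper leaves implicit (why the cross-terms with $a_{m,j_i}$ cannot contribute, why $\operatorname{coeff}^{H}\bigl((H)\cdot(H)\bigr)=|W(H)|$, and why $\operatorname{coeff}^{G}(\deg_{\mathcal V_{m,j}})=1$), and you close with indicator-variable algebra rather than the paper's three-way case split, but these are refinements of the same argument rather than a different method.
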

\begin{proof}
Consider the Burnside ring product of the relevant basic degrees
\begin{align*}
    \deg_{\mathcal V_{m,j_1}} \cdot \deg_{\mathcal V_{m,j_2}} & = \left( (G) - y_{m,j_1}(H) + a_{m,j_1} \right) \cdot \left( (G) - y_{m,j_2}(H) + a_{m,j_2} \right)\\
    &= (G) - (y_{m,j_1} + y_{m,j_2} - y_{m,j_1}y_{m,j_2}\vert W(H)\vert) (H) + a, 
\end{align*}
where $a \in A(G)$ is such that $\operatorname{coeff}^{{}^{s}H}(a) = 0$ for all $s \in \mathbb{N}$. Now, if $\dim\mathcal V_{m,j_1}^{H}$ and $\dim\mathcal V_{m,j_2}^{H}$ are both even, then one has $y_{m,j_1} = y_{m,j_2} = 0$ and the result follows. On the other hand, if $\dim\mathcal V_{m,j_1}^{H}$ and $\dim\mathcal V_{m,j_2}^{H}$ are both odd, then one has $y_{m,j_1} = y_{m,j_2} = x_0$ such that
\begin{align*}
    y_{m,j_1} + y_{m,j_2} - y_{m,j_1}y_{m,j_2}\vert W(H)\vert = x_0(2-x_0 \vert W(H) \vert),
\end{align*}
where in either of the cases: $x_0 = 2$ and $\vert W(H) \vert = 1$ or $x_0 = 1$ and $\vert W(H) \vert = 2$, one has $2 - x_0 \vert W(H) \vert = 0$. Suppose instead that $\dim\mathcal V_{m,j_1}^{H}$ and $\dim\mathcal V_{m,j_2}^{H}$ are of different parties and one has the two corresponding cases, $y_{m,j_1} = x_0$ and $y_{m,j_2} = 0$ \textit{or} $y_{m,j_1} = 0$ and $y_{m,j_2} = x_0$, both of which imply $y_{m,j_1} + y_{m,j_2} - y_{m,j_1}y_{m,j_2}\vert W(H)\vert = x_0$.
\end{proof}
\vs
Since our computations in the Burnside ring often involve the product of a number of basic degrees, we consider a natural generalization of Lemma \eqref{lemm:product_2bdegs_fixed_folding} to an orbit type of maximal kind $(H) \in \mathfrak M$ and the Burnside ring product of any finite collection of basic degrees $\{\deg_{\mathcal V_{m,j_k}} \}_{k=1}^N$ with $m > 0$ and $j_1,\ldots,j_N \in \{0,1,\ldots,r\}$.
\begin{corollary} \label{cor:product_Nbdegs_fixed_folding}
Let $(H) \in \mathfrak M$ be an orbit type of maximal kind. For any finite collection of basic degrees $\{ \deg_{\mathcal V_{m,j_k}} \}_{k=1}^N$ with $m > 0$ and $j_1,\ldots,j_N \in \{0,1,\ldots,r\}$, one has
\begin{align*} 
%\label{l3_res}
    \operatorname{coeff}^{H} \left( \prod\limits_{k=1}^N \deg_{\mathcal V_{m,j_k}} \right) =
        \begin{cases}
                0 \quad & \text{if } \dim\mathcal V_{m,j_k}^{H} \text{ is odd } \\ & \quad \text{for an even number of } j_k \in \{0,1,\ldots,r\};\\
                - x_0 \quad & \text{otherwise,}
            \end{cases}
    \end{align*}
    or, equivalently
    \begin{align*} 
    \operatorname{coeff}^{H} \left( \prod\limits_{k=1}^N \deg_{\mathcal V_{m,j_k}} \right) = - x_0\left[2 \nmid \sum\limits_{k=1}^N \dim\mathcal V_{m,j_k}^{H}\right].
    \end{align*}
\end{corollary}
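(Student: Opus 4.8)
The statement is the natural $N$-fold generalization of Lemma \ref{lemm:product_2bdegs_fixed_folding}, so the plan is to proceed by induction on $N$, with the base cases $N=1$ (which is formula \eqref{rm5_f2}, written as $\operatorname{coeff}^H(\deg_{\mathcal V_{m,j_1}}) = -y_{m,j_1} = -x_0[2\nmid \dim\mathcal V_{m,j_1}^H]$) and $N=2$ (which is precisely Lemma \ref{lemm:product_2bdegs_fixed_folding}) already in hand. For the inductive step, I would write $\prod_{k=1}^{N+1}\deg_{\mathcal V_{m,j_k}} = \left(\prod_{k=1}^N \deg_{\mathcal V_{m,j_k}}\right)\cdot \deg_{\mathcal V_{m,j_{N+1}}}$ and compute the $H$-coefficient of the right-hand side by the same mechanism used in the proof of Lemma \ref{lemm:product_2bdegs_fixed_folding}: both factors, modulo terms whose $\operatorname{coeff}^{{}^sH}$ vanishes for all $s$, have the shape $(G) + c_N (H)$ and $(G) + c_{N+1}(H)$ respectively, where $c_N := \operatorname{coeff}^H\!\left(\prod_{k=1}^N\deg_{\mathcal V_{m,j_k}}\right)$ is given by the inductive hypothesis and $c_{N+1} = -y_{m,j_{N+1}}$.

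The key algebraic input is the multiplication rule for a single maximal generator: $(H)\cdot(H) = |W(H)|\,(H) + (\text{lower terms})$, which follows from the simplified recurrence formula displayed in Section \ref{sec:orbtyps_maximal} for maximal orbit types, so that the product $((G)+c_N(H))\cdot((G)+c_{N+1}(H))$ contributes to $\operatorname{coeff}^H$ the quantity $c_N + c_{N+1} + c_N c_{N+1}|W(H)|$. I would then do the case analysis on parities exactly as in Lemma \ref{lemm:product_2bdegs_fixed_folding}: if $\dim\mathcal V_{m,j_{N+1}}^H$ is even, then $c_{N+1}=0$ and the total is $c_N$, so the parity of the number of odd-dimensional fixed spaces among $j_1,\dots,j_{N+1}$ is unchanged and the claim propagates; if $\dim\mathcal V_{m,j_{N+1}}^H$ is odd, then $c_{N+1}=-x_0$, and the number of odd fixed spaces flips parity, so I must check that $c_N + (-x_0) + c_N(-x_0)|W(H)|$ equals $-x_0$ when $c_N=0$ (the new parity is odd) and equals $0$ when $c_N=-x_0$ (the new parity is even). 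The first is immediate; the second uses the identity $-x_0 - x_0 + x_0^2|W(H)| = x_0(x_0|W(H)| - 2) = 0$, valid in both admissible configurations $(x_0,|W(H)|) \in \{(1,2),(2,1)\}$ — the same cancellation already exploited in Lemma \ref{lemm:product_2bdegs_fixed_folding}.

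Finally I would note that one must confirm that the ``lower terms'' genuinely do not contaminate $\operatorname{coeff}^H$: the terms $a_{m,j_k}$ have $\operatorname{coeff}^{{}^sH}(a_{m,j_k})=0$ for all $s\in\bn$, and since $(H)$ is of maximal kind the only orbit types strictly above $(H)$ are $(G)$ and the folds $({}^sH)$; hence in every product appearing in the recurrence the contribution to $\operatorname{coeff}^H$ comes only from the $(G)$ and $(H)$ parts of each factor, together with $(H)\cdot(H)$. This is exactly the role played by maximality of $(H)$ in the proof of Lemma \ref{lemm:product_2bdegs_fixed_folding}, and it carries over verbatim. I do not expect a genuine obstacle here — the only point requiring care is bookkeeping the parity count across the induction, and packaging the two equivalent formulations (the case split versus the compact Iverson-bracket form $-x_0[2\nmid\sum_k \dim\mathcal V_{m,j_k}^H]$) so that the inductive hypothesis and conclusion are stated in the same form; the Iverson-bracket form is the convenient one to induct on, since the parity of $\sum_{k=1}^{N+1}\dim\mathcal V_{m,j_k}^H$ is visibly $\dim\mathcal V_{m,j_{N+1}}^H$ plus the parity of $\sum_{k=1}^N\dim\mathcal V_{m,j_k}^H$.
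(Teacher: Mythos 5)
Your inductive argument is correct. The paper leaves Corollary~\ref{cor:product_Nbdegs_fixed_folding} without an explicit proof, but the expected argument (following the template of Theorem~\ref{thm:product_Nbdegs}) is a direct expansion of $\prod_{k=1}^N\bigl((G)-y_{m,j_k}(H)+a_{m,j_k}\bigr)$ over all subsets of the index set, after which the parity bookkeeping and the identity $x_0|W(H)|=2$ produce the closed form in one stroke. Your route is an induction on $N$ that reuses Lemma~\ref{lemm:product_2bdegs_fixed_folding} (equivalently, the identity $c_N+c_{N+1}+c_Nc_{N+1}|W(H)|$ with $c\in\{0,-x_0\}$) as the inductive engine; this buys you a shorter argument at the cost of having to re-verify at each step that the remainder terms $a$ stay outside the lattice above $(H)$. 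That verification is exactly right: since $(H)\in\mathfrak M$, the only orbit types $\geq(H)$ are $(G)$ and the folds $({}^sH)$, and the cross products $(H)\cdot a_{m,j_{N+1}}$, $a\cdot(H)$, $a\cdot a_{m,j_{N+1}}$ cannot produce $(H)$ because an orbit type appearing in $(K)\cdot(L)$ must be $\leq$ both $(K)$ and $(L)$. Both approaches are of comparable length; the direct expansion generalizes more cleanly to the variable-folding setting of Theorem~\ref{thm:product_Nbdegs}, whereas your induction is tied to the fact that all factors share the same Fourier mode $m$, which is precisely what makes the remainder bookkeeping trivial here.
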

\vs
The previous result only concerns the product of basic degrees associated with a fixed Fourier mode. We are also interested in characterizing the behaviour of the coefficients of an orbit type 
of maximal kind $(H) \in \mathfrak M$ and its $s$-foldings $\{ ({}^{s}H) \}_{s \in \bn} \subset \mathfrak M$ in the Burnside ring product of any pair of basic degrees $\deg_{\mathcal V_{s_1m,j_1}}, \deg_{\mathcal V_{s_2m,j_2}} \in A(G)$ with $m > 0$, $j_1,j_2 \in \{0,1,\ldots,r\}$ and where $s_1,s_2 \in \bn$ are any two numbers satisfying $s_1 \neq s_2$.
\begin{lemma} \label{lemm:product_2bdegs}
Let $(H) \in \mathfrak M$ be an orbit type of maximal kind. For any pair of basic degrees $\deg_{\mathcal V_{s_1m,j_1}}, \deg_{\mathcal V_{s_2m,j_2}} \in A(G)$ with $m > 0$, $j_1,j_2 \in \{0,1,\ldots,r\}$ and $s_1,s_2 \in \bn$ where $s_1 \neq s_2$ and for any number $s_0 \in \bn$, one has
\begin{align*}
\operatorname{coeff}^{{}^{s_0}H}&(\deg_{\mathcal V_{s_1m,j_1}} \cdot \deg_{\mathcal V_{s_2m,j_2}}) = -x_0\left[2 \nmid \dim\mathcal V_{m,j_0}^{H}\right]\left[s_0 \in \{s_1,s_2\}\right] \\ & \quad + 2x_0\left[2 \nmid \dim\mathcal V_{m,j_1}^{H} \dim\mathcal V_{m,j_2}^{H}\right]\left[\operatorname{gcd}(s_1,s_2) = s_0\right][\mathcal B_{\mathfrak m(H)}(\{s_0,s_1,s_2\})], \nonumber
\end{align*}
where \begin{align*}
    j_0 :=
    \begin{cases}
        j_1 & \text{ if } s_0=s_1; \\
        j_2 & \text{ if } s_0 = s_2.
    \end{cases}
\end{align*}
\end{lemma}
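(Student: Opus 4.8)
The plan is to expand the product $\deg_{\mathcal V_{s_1m,j_1}} \cdot \deg_{\mathcal V_{s_2m,j_2}}$ using the representation \eqref{rm5_f1} for each factor, with the caveat that, since $s_1 \neq s_2$, the two ``distinguished'' orbit types appearing in the two expansions are $({}^{s_1}H)$ and $({}^{s_2}H)$ rather than a common $(H)$. Concretely, by \eqref{rm5_f1}, \eqref{rm5_f5} and \eqref{rm5_f6}, one writes $\deg_{\mathcal V_{s_im,j_i}} = (G) - y_{m,j_i}\,({}^{s_i}H) + \tilde a_i$, where $y_{m,j_i} = x_0[2 \nmid \dim \mathcal V_{m,j_i}^H]$ and $\operatorname{coeff}^{{}^{s}H}(\tilde a_i) = 0$ for all $s \in \bn$ — here one must be slightly careful and argue that the ``remainder'' term $a_{s_im,j_i}$ from \eqref{rm5_f1}, whose coefficients at all folds of $H$ vanish, indeed contributes nothing to $\operatorname{coeff}^{{}^{s_0}H}$ of the product, which requires knowing that multiplying by $(G) - y\,({}^{s_k}H)$ cannot create a nonzero coefficient at any $({}^{s_0}H)$ out of a term supported away from the folding chain of $H$; this follows because $(H)$ is of maximal kind, so the only orbit types above $({}^{s_0}H)$ are $(G)$ and the other folds $({}^{s}H)$, and the recurrence formula for the Burnside product then forces the claim.

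First I would carry out the formal expansion
\[
\deg_{\mathcal V_{s_1m,j_1}} \cdot \deg_{\mathcal V_{s_2m,j_2}} = (G) - y_{m,j_1}({}^{s_1}H) - y_{m,j_2}({}^{s_2}H) + y_{m,j_1}y_{m,j_2}\,({}^{s_1}H)\cdot({}^{s_2}H) + (\text{terms with no fold of }H),
\]
and then read off $\operatorname{coeff}^{{}^{s_0}H}$ term by term. The unit $(G)$ contributes $0$ since $({}^{s_0}H) \neq (G)$ (an orbit type of maximal kind is a proper subgroup). The linear term $-y_{m,j_1}({}^{s_1}H)$ contributes $-y_{m,j_1}$ exactly when $s_0 = s_1$, i.e. $-x_0[2\nmid \dim\mathcal V_{m,j_1}^H][s_0 = s_1]$, and similarly for the $j_2$ term; since $s_1 \neq s_2$ these two cannot fire simultaneously, so together they give $-x_0[2\nmid\dim\mathcal V_{m,j_0}^H][s_0 \in \{s_1,s_2\}]$ with $j_0$ as defined. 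The cross term $y_{m,j_1}y_{m,j_2}\,({}^{s_1}H)\cdot({}^{s_2}H)$ I would evaluate by invoking the preceding theorem (the one computing $\operatorname{coeff}^{{}^{s}H}(({}^{s_0}H)\cdot({}^{s_1}H))$), which gives $\operatorname{coeff}^{{}^{s_0}H}(({}^{s_1}H)\cdot({}^{s_2}H)) = |W(H)|[s_0 = \gcd(s_1,s_2)][\mathcal B_{\mathfrak m(H)}(\{s_0,s_1,s_2\})]$; combined with $y_{m,j_1}y_{m,j_2} = x_0^2[2\nmid \dim\mathcal V_{m,j_1}^H][2\nmid\dim\mathcal V_{m,j_2}^H] = x_0^2[2\nmid \dim\mathcal V_{m,j_1}^H\dim\mathcal V_{m,j_2}^H]$ and the identity $x_0^2|W(H)| = 2x_0$ (valid in both cases $x_0 = 1, |W(H)| = 2$ and $x_0 = 2, |W(H)| = 1$), this yields the second summand $2x_0[2\nmid\dim\mathcal V_{m,j_1}^H\dim\mathcal V_{m,j_2}^H][\gcd(s_1,s_2) = s_0][\mathcal B_{\mathfrak m(H)}(\{s_0,s_1,s_2\})]$. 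Summing the three contributions gives the stated formula.

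The main obstacle I anticipate is the bookkeeping around the remainder terms: one needs to verify rigorously that every product involving at least one factor of $a_{s_im,j_i}$ (the part of a basic degree supported strictly away from the folding chain of $H$) has vanishing coefficient at every $({}^{s_0}H)$. The clean way is to note that, because $(H)$ is of maximal kind, for each fixed $s_0$ the orbit types strictly above $({}^{s_0}H)$ are exactly $(G)$ together with some subfamily of $\{({}^{s}H)\}_{s\in\bn}$, so the recurrence formula for $\operatorname{coeff}^{{}^{s_0}H}$ of a product only sees the coefficients of the two factors at $(G)$ and at folds of $H$ — and $a_{s_im,j_i}$ contributes $0$ at all of these. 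A secondary, purely arithmetic check is the case analysis for $x_0^2|W(H)| = 2x_0$ and $x_0^2 = x_0$ replaced correctly; this is routine given \eqref{rm5_f2}. Everything else is a direct substitution into results already established in Sections \ref{sec:generator_classification} and \ref{sec:char_maxorbtyps}.
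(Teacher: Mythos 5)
Your proof is correct and takes essentially the same approach as the paper: expand each basic degree via \eqref{rm5_f1} with the folding identities \eqref{rm5_f5}--\eqref{rm5_f6}, isolate the cross term $y_{m,j_1}y_{m,j_2}({}^{s_1}H)\cdot({}^{s_2}H)$, evaluate it with the preceding theorem, and close with the identity $x_0|W(H)|=2$. Your explicit justification that the remainder terms $a_{s_im,j_i}$ cannot contribute to $\operatorname{coeff}^{{}^{s_0}H}$ of the product — using maximality of $(H)$ together with the necessary condition $\operatorname{coeff}^{M}\bigl((K)\cdot(L)\bigr)\neq 0\Rightarrow (M)\leq(K),(L)$ — is a point the paper takes for granted, so that part is actually more carefully argued here.
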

\begin{proof}
    Consider the Burnside ring product of the relevant basic degrees
    \begin{align} \label{eq:2bdeg_product}
    \deg_{\mathcal V_{s_1m,j_1}} \cdot &\deg_{\mathcal V_{s_2m,j_2}} = \left( (G) - y_{m,j_1}({}^{s_1}H) + a_{m,j_1} \right) \cdot \left( (G) - y_{m,j_2}({}^{s_2}H) + a_{m,j_2}\right) \nonumber \\
        &= (G) - y_{m,j_1}({}^{s_1}H) - y_{m,j_2}({}^{s_2}H) + y_{m,j_1}y_{m,j_2}({}^{s_1}H) \cdot ({}^{s_2}H) + a \nonumber \\
        &= (G) - y_{m,j_1}({}^{s_1}H) - y_{m,j_2}({}^{s_2}H) \\ & \quad \quad \quad + y_{m,j_1}y_{m,j_2}\vert W(H) \vert ({}^{d}H)
    [\mathcal B_{\mathfrak m(H)}(\{d ,s_1,s_2\})] + a \nonumber
    \end{align}
where $d := \operatorname{gcd}(s_1,s_2)$ and $a \in A(G)$ satisfies $\operatorname{coeff}^{{}^{s}H}(a) = 0$ for all $s \in \mathbb{N}$. In order to demonstrate each of the boolean conditions, we consider the following cases: 
\begin{enumerate}
    \item[$(i)$] Supposing that $s_0 \notin \{s_1,s_2 \}$, if it is also the case that both $\dim\mathcal V_{m,j_1}^{H}$ and $ \dim\mathcal V_{m,j_2}^{H}$ are odd \textit{and} $\operatorname{gcd}(s_1,s_2) = s_0$, \textit{and} $\mathcal B_{\mathfrak m(H)}(\{s_0 ,s_1,s_2\})$  then one has
\begin{align*}
\operatorname{coeff}^{{}^{s_0}H}(\deg_{\mathcal V_{s_1m,j_1}} \cdot \deg_{\mathcal V_{s_2m,j_2}}) = x_0^2 \vert W(H) \vert ({}^{s_0}H) 
\end{align*}
and the result follows from the fact that $x_0 \vert W(H) \vert = 2$. On the other hand, if \textit{either} of $\dim\mathcal V_{m,j_1}^{H}$ and $\dim\mathcal V_{m,j_2}^{H}$ are even \textit{or} if $\operatorname{gcd}(s_1,s_2) \neq s_0$, \textit{ or } if 
$\mathcal B_{\mathfrak m(H)}(\{s_0 ,s_1,s_2\})$ does not hold,
then the result follows immediately from the assumption that $s_0 \notin \{s_1,s_2 \}$.
\item[$(ii)$] Supposing instead that $s_0 \in \{s_1,s_2\}$, if it is also the case that $\dim\mathcal V_{m,j_0}^{H}$ is even, then one has $y_{m,j_0} = 0$ and the result follows from \eqref{eq:2bdeg_product}. If one has $s_0 \in \{s_0,s_1\}$, with both $\dim\mathcal V_{m,j_1}^{H}$, $\dim\mathcal V_{m,j_{s_2}}^{H}$ odd {\it and} $s_0 = \gcd(s_1,s_2)$ {\it and} $\mathcal B_{\mathfrak m(H)}(\{s_0 ,s_1,s_2\})$, then \eqref{eq:2bdeg_product} becomes
\begin{align*}
    \deg_{\mathcal V_{s_1m,j_1}} \cdot \deg_{\mathcal V_{s_2m,j_2}} &= (G) - x_0({}^{s_0}H) + 2x_0({}^{s_0}H) + a',
\end{align*}
where $a' \in A(G)$ is such that $\operatorname{coeff}^{{}^{sm}H}(a') = 0$, and the result follows. On the other hand, if either of $\dim\mathcal V_{s_1m,j_{s_1}}^{H}, \dim\mathcal V_{s_1m,j_{s_1}}^{H}$ are even (in which case one has $y_{m,j_1}y_{m,j_2} = 0$) \text{or} if $\operatorname{gcd}(t,l) \neq s_0$, \text{or} if $\mathcal B_{\mathfrak m(H)}(\{s_0 ,s_1,s_2\})$ does not hold, then the result follows from \eqref{eq:2bdeg_product}.
\end{enumerate}
\end{proof}
\vs
What follows is the final generalization of the previous results to an orbit type of maximal kind $(H) \in \mathfrak M$ and its $s$-foldings $\{ ({}^{s}H) \}_{s \in \bn} \subset \mathfrak M$ in the Burnside ring product of any finite collection of basic degrees $\{\deg_{\mathcal V_{s_km,j_{s_k}}} \}_{k=1}^N$ with $m > 0$, $j_{s_1},\ldots,j_{s_N} \in \{0,1,\ldots,r\}$ and where
$s_1,\ldots,s_{N} \in \mathbb{N}$ are distinct.
\begin{theorem}\label{thm:product_Nbdegs}
Let $(H) \in \mathfrak M$ be an orbit type of maximal kind. For any finite collection of basic degrees $\{\deg_{\mathcal V_{s_km,j_{s_k}}} \}_{k=1}^N$ with $m > 0$, $j_{s_1},\ldots,j_{s_N} \in \{0,1,\ldots,r\}$ and where $s_1,\ldots,s_{N} \in \mathbb{N}$ are distinct and for any number $s_0 \in \bn$, one has
\begin{align*} 
%\label{l3_res}
& \operatorname{coeff}^{{}^{s_0}H}  \left( \prod\limits_{k=1}^N \deg_{\mathcal V_{s_km,j_{s_k}}} \right)  =  -x_0 \left[ 2 \nmid \dim \mathcal V_{m,j_{s_0}}^{H} \right] \left[s_0 \in \{s_1,\ldots,s_N\}\right] \\ & \quad \quad + 2x_0 \sum\limits_{\substack{ I \subset \{s_1,s_2,\ldots,s_N\} \\ I \neq \emptyset, \{s_0\} }} (-2)^{\vert I \vert-2} 
\left[ \mathcal B_{\mathfrak m(H)}(I) \right] \left[s_0 = \operatorname{gcd}(I)  \right] \left[2 \nmid \prod_{s \in I} \dim \mathcal V_{m,j_s}^{H} \right], \nonumber
    \end{align*}
where, if $s_0 = s_k$ for any $k = 1,\ldots,N$, then $j_{s_0} := j_{s_k}$. 
\end{theorem}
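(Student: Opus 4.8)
The plan is to induct on $N$, the cases $N=1$ and $N=2$ being furnished respectively by \eqref{rm5_f1}--\eqref{rm5_f2} and by Lemma~\ref{lemm:product_2bdegs}. For the inductive step I would peel off the last basic degree: set $P_{N-1} := \prod_{k=1}^{N-1}\deg_{\mathcal V_{s_km,j_{s_k}}}$, so that $P_N = P_{N-1}\cdot\deg_{\mathcal V_{s_Nm,j_{s_N}}}$, and decompose both factors relative to the foldings of $H$. By \eqref{rm5_f1}, $\deg_{\mathcal V_{s_Nm,j_{s_N}}} = (G) - y_{m,j_{s_N}}({}^{s_N}H) + a_{m,j_{s_N}}$; by the inductive hypothesis, $P_{N-1} = (G) + \sum_{s\in\bn} c_s\,({}^sH) + R$, where $c_s := \operatorname{coeff}^{{}^sH}(P_{N-1})$ is the already-known $(N-1)$-term formula and both $R$ and $a_{m,j_{s_N}}$ have vanishing coefficient at every folding $({}^sH)$.

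The first step is a maximality argument discarding every ``remainder'' contribution. Since $(H)$, hence each $({}^sH)$, is of maximal kind, the only orbit types strictly above $({}^sH)$ are $(G)$ and the foldings $({}^{ss'}H)$; thus no orbit type occurring in $R$ or in $a_{m,j_{s_N}}$ lies above a folding of $H$. As $\operatorname{coeff}^{(M)}\bigl((K_1)\cdots(K_\ell)\bigr) = 0$ whenever $(M)\not\le(K_i)$ for some $i$ — an isotropy subgroup in $G/K_1\times\cdots\times G/K_\ell$ is, up to conjugacy, $\le$ each $K_i$ — any product involving at least one remainder factor contributes $0$ to $\operatorname{coeff}^{{}^sH}$ for every $s$. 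Expanding $P_N$, extracting $\operatorname{coeff}^{{}^{s_0}H}$, and applying the preceding theorem on the two-fold product $({}^sH)\cdot({}^{s_N}H)$, what remains is
\begin{align*}
\operatorname{coeff}^{{}^{s_0}H}(P_N) = c_{s_0} - y_{m,j_{s_N}}\Bigl( [s_0 = s_N] + |W(H)|\sum_{s\in\bn} c_s\,[\mathcal B_{\mathfrak m(H)}(\{s,s_N\})]\,[s_0 = \gcd(s,s_N)] \Bigr).
\end{align*}

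Next I would substitute the full $(N-1)$-term expression for $c_{s_0}$ and for each $c_s$, and reorganize the outcome as a sum over subsets $I\subseteq\{s_1,\dots,s_N\}$. Subsets avoiding $s_N$ reproduce exactly the sum sitting inside $c_{s_0}$; a subset $I$ with $|I|\ge 2$ and $s_N\in I$ arises by pairing $I':=I\setminus\{s_N\}$ — as it occurs (possibly as a singleton coming from the leading term of $c_{\gcd(I')}$) in the expansion of $c_{\gcd(I')}$ — with the folding $({}^{s_N}H)$, noting that $\gcd(I) = \gcd(\gcd(I'),s_N) = s_0$. Gathering scalar factors, one uses the multiplicativity $\prod_{s\in I}[2\nmid\dim\mathcal V_{m,j_s}^H] = [2\nmid\prod_{s\in I}\dim\mathcal V_{m,j_s}^H]$, the identity $x_0\,|W(H)| = 2$ (immediate from the definitions of $x_0$ and of $|W(H)|\in\{1,2\}$), and the arithmetic identity $(-1)^{|I|}x_0^{|I|}|W(H)|^{|I|-1} = 2x_0(-2)^{|I|-2}$ valid for $|I|\ge 2$, which transforms the contribution of each such $I$ into $2x_0(-2)^{|I|-2}[\mathcal B_{\mathfrak m(H)}(I)][s_0 = \gcd(I)][2\nmid\prod_{s\in I}\dim\mathcal V_{m,j_s}^H]$; the singleton subsets collapse (using $s_N\notin\{s_1,\dots,s_{N-1}\}$) to the leading term $-x_0[2\nmid\dim\mathcal V_{m,j_{s_0}}^H][s_0\in\{s_1,\dots,s_N\}]$, and $I=\{s_0\}$ is excluded from the sum precisely because it is already accounted for there.

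The main obstacle is the bookkeeping of the Boolean predicates $\mathcal B_{\mathfrak m(H)}$ under this iteration: recognizing $[\mathcal B_{\mathfrak m(H)}(I')]$ (from the expansion of $c_{\gcd(I')}$) together with the factor $[\mathcal B_{\mathfrak m(H)}(\{\gcd(I'),s_N\})]$ (from the two-fold product formula) as the single bracket $[\mathcal B_{\mathfrak m(H)}(I'\cup\{s_N\})]$ requires showing that $\mathcal B_{\mathfrak m(H)}(I')\wedge\mathcal B_{\mathfrak m(H)}(\{\gcd(I'),s_N\})$ is equivalent to $\mathcal B_{\mathfrak m(H)}(I'\cup\{s_N\})$. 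I would isolate this as a number-theoretic lemma, proved by reading $\mathcal B_{\mathfrak m(H)}(I)$ through Theorem~\ref{thm:relations_along_folding_chain} as the assertion that $\{({}^sH):s\in I\}$ is a chain in $\Phi_0(G)$ with least element $({}^{\gcd(I)}H)$, and then checking that comparability of that least element with $({}^{s_N}H)$ propagates to comparability with every member of the chain. With that lemma in hand, the substitution and collection of terms is routine and the induction closes.
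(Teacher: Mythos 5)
Your inductive strategy is genuinely different from the paper's: the paper expands $\prod_{k}\bigl((G)-y_{m,j_{s_k}}({}^{s_k}H)\bigr)$ directly as a sum over all subsets $I\subset\{s_1,\dots,s_N\}$ and then asserts $\prod_{s\in I}({}^{s}H)=|W(H)|^{|I|-1}[\mathcal B_{\mathfrak m(H)}(I)]({}^{\gcd(I)}H)$ modulo remainder terms, whereas you peel off one factor and feed the $(N-1)$-case back through the two-factor formula. Your reduction of the inductive step to a single number-theoretic identity is also a correct diagnosis of where the difficulty lies. However, that identity is false, and this is a genuine gap, not a detail that can be smoothed over.

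The lemma you need is the equivalence
\[
\mathcal B_{\mathfrak m}(I')\wedge\mathcal B_{\mathfrak m}\bigl(\{\gcd(I'),s_N\}\bigr)\;\Longleftrightarrow\;\mathcal B_{\mathfrak m}\bigl(I'\cup\{s_N\}\bigr),
\]
and it fails in both directions. For the forward implication, take $\mathfrak m=5$, $I'=\{2,3\}$, $s_N=4$, so $\gcd(I')=1$. One checks $\mathcal B_5(\{2,3\})$ holds because $(3+2)/\gcd(2,3)=5$, and $\mathcal B_5(\{1,4\})$ holds because $(4+1)/1=5$; yet $\mathcal B_5(\{2,3,4\})$ fails at the pair $(2,4)$, since $(4-2)/\gcd(2,4)=1$ and $(4+2)/\gcd(2,4)=3$ are both indivisible by $5$. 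For the reverse implication, take $\mathfrak m=7$, $I'=\{3,4\}$, $s_N=10$: all three pairs in $\{3,4,10\}$ satisfy the condition ($7\mid 7$ in each case), so $\mathcal B_7(\{3,4,10\})$ holds, but $\mathcal B_7(\{1,10\})$ fails since neither $9$ nor $11$ is divisible by $7$. The source of the error is the ``chain'' interpretation in your closing paragraph: by Theorem~\ref{thm:relations_along_folding_chain}, $\mathcal B_{\mathfrak m(H)}(I)$ does say that $\{({}^{s}H):s\in I\}$ is a chain, but its least element is $({}^{\min I}H)$, not $({}^{\gcd(I)}H)$; when $\gcd(I)\notin I$ the orbit type $({}^{\gcd(I)}H)$ is not a member of the chain at all, and $\mathcal B_{\mathfrak m(H)}(I)$ imposes no relation between $({}^{\gcd(I)}H)$ and the chain members. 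Consequently, comparability of $({}^{\gcd(I')}H)$ with $({}^{s_N}H)$ neither implies nor is implied by comparability of each $({}^{s}H)$, $s\in I'$, with $({}^{s_N}H)$, and the ``routine'' collection of terms does not close the induction.
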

\begin{proof}
Consider the Burnside ring product of the relevant basic degrees
\begin{align} \label{prod_1}
    \prod\limits_{k=1}^N \deg_{\mathcal V_{s_km,j_{s_k}}} &= \prod\limits_{k=1}^N  (G) - y_{m,j_{s_k}}({}^{s_k}H) + b_{m,j_{s_k}},
\end{align}
and also the related Burnside ring product
\begin{align} \label{prod_2}
    \prod\limits_{k=1}^N  (G) - y_{m,j_{s_k}}({}^{s_k}H).
\end{align}
Notice that for any $s_0 \in \mathbb{N}$, one has
\begin{align*}
%\label{eq:first_computation_gdegA}
\operatorname{coeff}^{{}^{s_0}H}\left(\prod\limits_{k=1}^N \deg_{\mathcal V_{s_km,j_{s_k}}} \right) = \operatorname{coeff}^{{}^{s_0}H}\left( \prod\limits_{k=1}^N  (G) - y_{m,j_{s_k}}({}^{s_k}H)  \right),
\end{align*}
and also that \eqref{prod_2} has the expansion 
\begin{align*} 
    \prod\limits_{k=1}^N  (G) - &y_{m,j_{s_k}}({}^{s_k}H) = \sum\limits_{I \subset \{s_1,s_2,\ldots,s_N\}} \prod\limits_{s \in I} -y_{m,j_s} ({}^{s}H) \nonumber \\
    &= \sum\limits_{I \subset \{s_1,s_2,\ldots,s_N\}} \vert W(H) \vert^{\vert I \vert -1} \left(\prod\limits_{s \in I} -y_{m,j_s} \right) \left[ \mathcal B_{\mathfrak m(H)}(I) \right] ({}^{\operatorname{gcd}(I)}H), 
\end{align*}
where the notation $I$ is used to indicate a subset of the indices $\{s_1,s_2,\ldots, s_N \}$ and the expression $\sum_{I \subset \{s_1,s_2,\ldots, s_N \}}$ describes a summation over all such subsets, including the empty set, in which case we put $\prod_{s \in \emptyset} -y_{m,j_s}(\alpha_*) ({}^{s}H) := (G)$, and the full set. It follows that the coefficient of $({}^{s_0}H) \in \Phi_0(G; \mathscr H)$ in the Burnside ring product \eqref{prod_1}, which for convenience of notation we denote by
\[
n_{s_0} := \operatorname{coeff}^{{}^{s_0}H}(\prod\limits_{k=1}^N \deg_{\mathcal V_{s_km,j_{s_k}}}),
\]
is specified by the formula
\begin{align} \label{coeff_1}
n_{s_0} = \sum\limits_{\substack{ I \subset \{s_1,s_2,\ldots,s_N\} \\ I \neq \emptyset}} \vert W(H) \vert^{\vert I \vert -1} \left[ \mathcal B_{\mathfrak m(H)}(I) \right] [s_0 = \operatorname{gcd}(I) ] \prod\limits_{s \in I} -y_{m,s}. 
\end{align}
Now, for any non-empty subset of indices $I \subset \{s_1,s_2,\ldots,s_N\}$, the product 
\begin{align}
\prod\limits_{s \in I} -y_{m,j_s} = \prod\limits_{s \in I} -x_0\left[2 \nmid \dim \mathcal V_{m,j_s}^{H}\right],
\end{align}
is determined by the rule
\begin{align*}
    \prod\limits_{s \in I} -y_{m,j_s} = 
        \begin{cases}
        (-x_0)^{\vert I \vert} \quad& \text{ if } \dim \mathcal V_{m,j_s}^{H} \text{ is odd for all } s \in I;\\
        0 \quad& \text{ otherwise}.
    \end{cases} 
\end{align*}
Therefore, \eqref{coeff_1} becomes
\begin{align*}
%\label{coeff_2}
&\sum\limits_{\substack{ I \subset \{s_1,s_2,\ldots,s_N\} \\ I \neq \emptyset}} \vert W(H) \vert^{\vert I \vert -1}
(-x_0)^{\vert I \vert}  \left[ \mathcal B_{\mathfrak m(H)}(I) \right] \left[s_0 = \operatorname{gcd}(I)  \right] \left[ 2 \nmid \prod_{s \in I} \dim \mathcal V_{m,j_s}^{H} \right]  \nonumber \\
& =  -x_0 \left[ 2 \nmid \dim \mathcal V_{m,j_0}^{H} \right] \left[s_0 \in \{s_1,\ldots,s_N\}\right] \nonumber \\ & \quad + 2x_0 \sum\limits_{\substack{ I \subset \{s_1,s_2,\ldots,s_N\} \\ I \neq \emptyset, \{s_0\} }} (-2)^{\vert I \vert-2} \left[ \mathcal B_{\mathfrak m(H)}(I) \right] \left[s_0 = \operatorname{gcd}(I)  \right] \left[2 \nmid \prod_{s \in I} \dim \mathcal V_{m,j_s}^{H} \right],
\end{align*}
where we have used the fact that $x_0 \vert W(H) \vert = 2$.
\end{proof}
\vs
The following practical Corollary will prove useful in subsequent sections.
\begin{corollary}\label{cor:product_Nbdegs}
    Let $(H) \in \mathfrak M$ be an orbit type of maximal kind. For any finite collection of basic degrees $\{\deg_{\mathcal V_{s_km,j_{s_k}}} \}_{k=1}^N$ with $m > 0$, $j_{s_1},\ldots,j_{s_N} \in \{0,1,\ldots,r\}$ and where $s_1,\ldots,s_{N} \in \mathbb{N}$ are distinct and for any $s \in \{s_1,\ldots,s_{N}\}$ with $2 \nmid \dim \mathcal V_{m,j_{s}}^H$, 
\begin{align}
\operatorname{coeff}^{{}^{s}H} & \left( \prod\limits_{k=1}^N \deg_{\mathcal V_{s_km,j_{s_k}}} \right)  \neq 0.
    \end{align}
\end{corollary}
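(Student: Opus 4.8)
The plan is to deduce the statement directly from Theorem~\ref{thm:product_Nbdegs} by specializing $s_0 = s$ and checking that the term coming from the singleton index set survives a parity (i.e.\ $2$-adic) argument. First I would fix $s \in \{s_1,\ldots,s_N\}$ with $2 \nmid \dim \mathcal V_{m,j_{s}}^{H}$ and substitute $s_0 := s$ into the coefficient formula of Theorem~\ref{thm:product_Nbdegs}. Since $s \in \{s_1,\ldots,s_N\}$ and $\dim\mathcal V_{m,j_{s}}^{H}$ is odd, both Iverson brackets in the first summand $-x_0\left[2\nmid\dim\mathcal V_{m,j_{s}}^{H}\right]\left[s\in\{s_1,\ldots,s_N\}\right]$ evaluate to $1$, so this summand equals $-x_0$, which is nonzero because $x_0 \in \{1,2\}$ (recall $x_0$ is either $1$ or $2$ according to whether $\vert W(H)\vert$ is $2$ or $1$).

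Next I would isolate the one bookkeeping observation on which the argument rests: every index set $I$ that contributes a nonzero term to the remaining sum has $\vert I\vert \ge 2$. Indeed such an $I$ is non-empty and, by the range of summation, different from $\{s\}$; and if $I=\{t\}$ were a singleton, then $\gcd(I)=t$, so the bracket $[s=\gcd(I)]$ would force $t=s$, contradicting $I\neq\{s\}$. Hence for every $I$ occurring in the sum the exponent $\vert I\vert-2$ is a non-negative integer, $(-2)^{\vert I\vert-2}\in\bz$, and the whole sum is an integer multiple of $2x_0$ (possibly $0$, e.g.\ when $N=1$, in which case the sum is empty). Therefore the coefficient in question has the form $-x_0 + 2x_0\, c$ for some $c\in\bz$.

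Finally I would conclude with the parity argument: if $x_0=1$ then $-1+2c$ is odd, and if $x_0=2$ then $-2+4c\equiv 2\pmod 4$; in either case the coefficient is nonzero, which proves the claim. The step I expect to be the main --- indeed essentially the only --- obstacle is the observation that all index sets $I$ appearing in the sum have cardinality at least two, since this is precisely what makes the sum divisible by $2x_0$ and hence strictly more $2$-divisible than the leading term $-x_0$; everything else is immediate from Theorem~\ref{thm:product_Nbdegs} and the definition of $x_0$.
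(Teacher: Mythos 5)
Your proof is correct and follows essentially the same route as the paper's one-line argument: specialize Theorem~\ref{thm:product_Nbdegs} to $s_0 = s$, note the leading term is exactly $-x_0$, and observe that $-x_0 + 2x_0 c \neq 0$ for any integer $c$. Your explicit verification that every contributing index set $I$ has $\vert I\vert \geq 2$ (so that $(-2)^{\vert I\vert - 2}$ is indeed an integer and the sum is an integer multiple of $2x_0$) is a detail the paper leaves implicit but is precisely the right thing to check, and your use of $c \in \bz$ rather than $c \in \bn$ is actually the correct level of generality since the alternating signs can make the sum negative.
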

\begin{proof}
    Indeed, notice that $-x_0 + 2x_0 C \neq 0$ for any value of $C \in \bn$.
\end{proof}

\section{On the Non-Triviality of the Degree $\gdeg(\mathscr A, B(\mathscr H))$}\label{sec:nontriviality_deg}
Given an orbit type of maximal kind $(H) \in \mathfrak M$ and a positive number $s > 0$, we put 
\begin{align*}
\begin{cases}
  \Sigma^s(H) := \{ (m,j) \in \Sigma_0 : 2 \nmid \dim \mathcal V_{m,j}^{{}^{s}H} \}; \\   
\mathfrak n^s(H) := | \Sigma^s(H) |,  
\end{cases}
\end{align*}
and
\begin{align*} 
%\label{def:set_S(H)}
 S(H):= \{ s \in \bn : \mathfrak n^s(H) \neq 0 \}.  
\end{align*}
\begin{remark}
If $(H) \in \mathfrak M_{m_0}$ for some $m_0 >0$, then $(m,j) \in \Sigma^s(H)$ only if $m = m_0s$.      
\end{remark}
We are now in a position to present our main result. 
\begin{proposition} \label{prop:main_existence}
Let $(H) \in \mathfrak M$ be an orbit type of maximal kind. For any $s_0 \in \bn$, one has 
\begin{align*} 
\operatorname{coeff}^{{}^{s_0}H} & \left( \gdeg\left(\mathscr A,B(\mathscr H) \right) \right)  =  -x_0 \left[ 2 \nmid \mathfrak n^{s_0}(H) \right] \left[s_0 \in  S(H) \right] \\ & \quad + 2x_0 \sum\limits_{\substack{ I \subset S(H) \\ I \neq \emptyset, \{s_0\} }} (-2)^{\vert I \vert-2} 
\left[ \mathcal B_{\mathfrak m(H)}(I) \right] \left[s_0 = \operatorname{gcd}(I)  \right] \left[2 \nmid \prod_{s \in I} \mathfrak n^{s}(H) \right]. \nonumber
    \end{align*}
\end{proposition}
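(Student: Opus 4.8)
The strategy is to reduce the degree $\gdeg(\mathscr A, B(\mathscr H))$ to a Burnside ring product of basic degrees indexed over the negative-spectrum set $\Sigma_0$, then group that product according to the $O(2)$-mode $m$ and apply Theorem~\ref{thm:product_Nbdegs}. Concretely, recall from \eqref{eq:final_computation_gdegA} that
\[
\gdeg(\mathscr A, B(\mathscr H)) = \prod_{(m,j) \in \Sigma_0} \deg_{\mathcal V_{m,j}}.
\]
Fix an orbit type of maximal kind $(H) \in \mathfrak M$, say $(H) \in \mathfrak M_{m_0}$ for some $m_0 > 0$. By the Remark preceding this proposition, the only indices $(m,j) \in \Sigma_0$ that can contribute a nonzero coefficient at $({}^{s_0}H)$—indeed at any $({}^{s}H)$—are those with $m = m_0 s$ for some $s \in \bn$, since $\dim \mathcal V_{m,j}^{{}^{s}H} \neq 0$ forces $(m,j)$ to lie in $\Phi_0(G;\mathscr H_{m})$ with $m/m_0 = s$. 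So I would first discard from the product every basic degree whose mode is not an integer multiple of $m_0$; the discarded factors contribute only to orbit types $(L)$ with $\operatorname{coeff}^{{}^{s}H}(L) = 0$ for all $s$, hence do not affect any of the coefficients we are computing.

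\textbf{Main computation.} After this reduction, the relevant sub-product is $\prod_{s \in \bn} \prod_{j : (m_0 s, j) \in \Sigma_0} \deg_{\mathcal V_{m_0 s, j}}$. The inner product over $j$ at a fixed folding level $s$ is a product of basic degrees all sharing the same mode $m_0 s = s \cdot m_0$, so by Corollary~\ref{cor:product_Nbdegs_fixed_folding} applied with the folding identification \eqref{rm5_f5}--\eqref{rm5_f6}, this inner product equals $(G) - y^{s}(H)\,({}^{s}H) + (\text{terms irrelevant at any } {}^{t}H)$, where $y^{s}(H) = x_0\,[\,2 \nmid \sum_{j} \dim \mathcal V_{m_0 s, j}^{{}^{s}H}\,] = x_0\,[\,2 \nmid \mathfrak n^{s}(H)\,]$, using that $\dim \mathcal V_{m_0 s, j}^{{}^{s}H}$ is odd exactly for $(m_0 s, j) \in \Sigma^s(H)$ and the count of such $j$ is $\mathfrak n^s(H)$. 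Thus, modulo terms that vanish at every $({}^{t}H)$, the degree collapses to
\[
\gdeg(\mathscr A, B(\mathscr H)) \equiv \prod_{s \in S(H)} \Bigl( (G) - y^{s}(H)\,({}^{s}H) \Bigr),
\]
where $S(H) = \{ s : \mathfrak n^s(H) \neq 0\}$ indexes precisely the folding levels with a nontrivial contribution (for $s \notin S(H)$ the factor is $(G)$ and drops out). This is now exactly the situation of Theorem~\ref{thm:product_Nbdegs} with the role of the collection $\{\deg_{\mathcal V_{s_k m, j_{s_k}}}\}$ played by the ``effective'' factors $\{(G) - y^{s}(H)({}^{s}H)\}_{s \in S(H)}$ and with $[\,2 \nmid \dim\mathcal V_{m,j_s}^{H}\,]$ replaced throughout by $[\,2 \nmid \mathfrak n^{s}(H)\,]$. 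Invoking that theorem—or re-running its subset-expansion argument directly on the displayed product—yields the claimed formula for $\operatorname{coeff}^{{}^{s_0}H}(\gdeg(\mathscr A,B(\mathscr H)))$ verbatim.

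\textbf{The main obstacle.} The delicate point is justifying that the ``irrelevant remainder'' stays irrelevant under multiplication: when one expands the full product $\prod_{s}((G) - y^s(H)({}^s H) + a^s)$, cross terms involving the $a^s$ could in principle produce an orbit type lying in the folding chain $\{({}^t H)\}_{t}$. This is handled exactly as in the proof of Lemma~\ref{lemm:product_2bdegs} and Theorem~\ref{thm:product_Nbdegs}: because $(H)$ is of maximal kind, the only orbit types strictly above $(H)$ are $(G)$ and the foldings $({}^{s}H)$, and $\operatorname{coeff}^{{}^{t}H}(a^s) = 0$ by construction together with the fact that a product $({}^{t}H)\cdot(L)$ with $\operatorname{coeff}^{\bullet}$ supported away from the chain cannot feed back into the chain at a maximal-kind orbit type. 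One must also confirm the bookkeeping that $\gcd$ over a subset $I \subset S(H)$ of folding levels corresponds correctly to the mode arithmetic $m_0 \cdot \gcd(I) = \gcd\{m_0 s : s \in I\}$ and that $\mathcal B_{\mathfrak m(H)}(I)$ is the right Boolean gate—both of which are immediate from the definitions in Section~\ref{sec:sfolding} and the already-proven Theorem~\ref{thm:relations_along_folding_chain}. With these points in place the proposition follows; the computation is essentially a re-indexing of Theorem~\ref{thm:product_Nbdegs}.
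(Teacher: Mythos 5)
Your proof is correct and follows essentially the same route as the paper's: rewrite \eqref{eq:final_computation_gdegA}, group the basic degrees by folding level $s$ relative to the base mode $m_0$, collapse each level-$s$ block to an effective element of the form $(G)-x_0[2\nmid\mathfrak n^s(H)]({}^sH)+(\text{chain-irrelevant remainder})$ via Corollary~\ref{cor:product_Nbdegs_fixed_folding} and the folding relation \eqref{rm5_f6}, and then feed the resulting product over $S(H)$ into Theorem~\ref{thm:product_Nbdegs}. The only cosmetic difference is bookkeeping: you keep the even-fixed-point-dimension factors inside each level-$s$ block (where they contribute $y_{m,j}=0$), while the paper peels them off into a separate factor $\rho_G^0(H)$ supported on $\Sigma^0(H)$; the two partitions of $\Sigma_0$ give identical coefficients on the folding chain. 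Your explicit remark about why cross-terms from the remainders cannot re-enter the chain (using that an orbit type strictly above a maximal-kind $({}^{s_0}H)$ must be $(G)$ or some $({}^sH)$, together with the requirement $(L)\leq(K_1),(K_2)$ for $\operatorname{coeff}^L((K_1)\cdot(K_2))\neq 0$) is exactly the point the paper relies on implicitly when passing from the $\rho_G^{s_k}(H)$ to the surrogate basic degrees $\deg_{\mathcal V_{s_km_0,j_{s_k}}}$.
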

\begin{proof}
For convenience of notation, put
\[
\Sigma^0(H) := \{ (m,j) \in \Sigma_0: 2 \mid \dim \mathcal V_{m,j}^{{}^{s}H} \text{ for all } s > 0 \},
\]
and define
\begin{align*}
%\label{def:rho_s_G}
 \rho^s_G(H) := \prod_{(m,j) \in \Sigma^s(H)} \deg_{\mathcal V_{m,j}}. 
\end{align*}
Enumerating the set $S(H) = \{s_1,s_2,\ldots, s_N \}$, 
one clearly has
\[
\Sigma_0  \setminus \Sigma^0(H) = \bigcup_{k=1}^N \Sigma^{s_k}(H),
\]
such that the degree computation \eqref{eq:final_computation_gdegA} becomes
\[
\gdeg(\mathscr A, B(\mathscr H)) = \rho^0_G(H) \cdot \prod_{k  = 1}^{N} \rho^{s_k}_G(H).
\]
From Lemma \ref{lemm:product_2bdegs_fixed_folding} and Corollary \ref{cor:product_Nbdegs_fixed_folding}, one has
\begin{align*}
\rho_G^s(H) = \begin{cases}
    (G) + b_0 & \text{ if } s = 0; \\
    (G) - y_s({}^{s}H) + b_s & \text{ if } s > 0,
\end{cases}    
\end{align*}
where $b_0,b_s$ satisfy $\operatorname{coeff}^{{}^{s}H}(b_0) = \operatorname{coeff}^{{}^{s}H}(b_s) = 0$ for all $s \in \bn$ and
\begin{align*} 
%\label{rule:coefficient_ys}
y_s = \begin{cases}
    0 & \text{ if } \mathfrak n^s(H) \text{ is even}; \\
    x_0 & \text{ if } \mathfrak n^s(H) \text{ is odd}.
\end{cases}    
\end{align*}
Assume, without loss of generality, that $(H) \in \mathfrak M_{m_0}$ and notice that each of the elements $\rho_G^{s_k}(H)$ has the form of a basic degree corresponding to an index $(s_k m_0, j_{s_k}) \in \Sigma$ satisfying
\begin{align*}
2 \mid \dim \mathcal V_{m_0,j_{s_k}}^H + \mathfrak n^{s_k}(H),
\end{align*}
i.e., such that 
\[
\operatorname{coeff}^{{}^{s_k}H}\left(\deg_{\mathcal V_{s_k m_0, j_{s_k}}} \right) = \operatorname{coeff}^{{}^{s_k}H}\left(\rho_G^{s_k}(H)\right).
\]
Therefore, the coefficient of any $s_0$-folding $({}^{s_0}H) \in \mathfrak M$ in the degree \eqref{eq:final_computation_gdegA} can be computed as follows
\[
\operatorname{coeff}^{{}^{s_0}H}(\gdeg(\mathscr A, B(\mathscr H))) = \operatorname{coeff}^{{}^{s_0}H}\left(\prod_{k=1}^N \deg_{\mathcal V_{s_k m_0, j_{s_k}}}\right),
\]
and the result follows from Theorem \ref{thm:product_Nbdegs}.
\end{proof}
\vs
Just as Corollary \ref{cor:product_Nbdegs} naturally follows from Theorem \ref{thm:product_Nbdegs}, the following Corollary is a direct implication of Proposition \ref{prop:main_existence}.
\begin{corollary} \label{cor:main_corollary}
    Let $(H) \in \mathfrak M$ be an orbit type of maximal kind. For any $s \in \bn$ with $2 \nmid n^{s}(H)$, one has
    \[
    \operatorname{coeff}^{{}^{s}H} \left( \gdeg\left(\mathscr A,B(\mathscr H) \right) \right) \neq 0.
    \]
\end{corollary}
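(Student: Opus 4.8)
The plan is to specialize the formula of Proposition~\ref{prop:main_existence} to the index $s_0 = s$ and to observe that the two resulting summands cannot cancel, exactly as Corollary~\ref{cor:product_Nbdegs} is extracted from Theorem~\ref{thm:product_Nbdegs}. First I would note that the hypothesis $2 \nmid \mathfrak n^{s}(H)$ forces $\mathfrak n^{s}(H) \neq 0$, hence $s \in S(H)$ by the definition of $S(H)$. Substituting $s_0 = s$ into the expression of Proposition~\ref{prop:main_existence}, both Iverson brackets $\left[2 \nmid \mathfrak n^{s}(H)\right]$ and $\left[s \in S(H)\right]$ equal $1$, so the first summand is precisely $-x_0$.

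Next I would check that the remaining summand
\[
2x_0 \sum_{\substack{I \subset S(H)\\ I \neq \emptyset,\, \{s\}}} (-2)^{|I|-2}\,\left[\mathcal B_{\mathfrak m(H)}(I)\right]\,\left[s = \operatorname{gcd}(I)\right]\,\left[2 \nmid \prod_{s' \in I}\mathfrak n^{s'}(H)\right]
\]
equals $2x_0$ times an integer $C$. The only point needing attention is that $(-2)^{|I|-2}$ is not an integer when $|I| = 1$; but any singleton $I = \{s'\}$ with $s' \neq s$ satisfies $\operatorname{gcd}(I) = s' \neq s$, so the bracket $\left[s = \operatorname{gcd}(I)\right]$ annihilates it, and every surviving term therefore has $|I| \geq 2$, for which $(-2)^{|I|-2} \in \bz$. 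Hence
\[
\operatorname{coeff}^{{}^{s}H}\left( \gdeg\left(\mathscr A, B(\mathscr H)\right)\right) = -x_0 + 2x_0 C = x_0(2C - 1),
\]
which is nonzero since $2C - 1$ is odd and $x_0 \in \{1,2\}$.

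I do not anticipate a genuine obstacle: the whole argument reduces to the parity bookkeeping showing that the ``$s_0 = s$'' contribution is the odd multiple $-x_0$ of $x_0$ while every other contribution is an even multiple of $x_0$, so that the total is an odd multiple of $x_0$ and in particular never vanishes --- the same observation ``$-x_0 + 2x_0 C \neq 0$'' that proves Corollary~\ref{cor:product_Nbdegs}.
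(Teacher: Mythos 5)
Your proposal is correct and is precisely the argument the paper has in mind: the paper's proof simply says the Corollary is ``a direct implication of Proposition~\ref{prop:main_existence}'' just as Corollary~\ref{cor:product_Nbdegs} follows from Theorem~\ref{thm:product_Nbdegs} via the observation $-x_0 + 2x_0C \neq 0$. You have spelled out the two points the paper leaves implicit --- that $2 \nmid \mathfrak n^{s}(H)$ forces $s \in S(H)$, and that the bracket $[s = \operatorname{gcd}(I)]$ eliminates all singletons $I \neq \{s\}$ so that $(-2)^{|I|-2}$ is always an integer in the surviving terms --- and both checks are accurate.
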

In turn, combining Lemma \ref{lemm:sufficient_condition} with Corollary \ref{cor:main_corollary} leads immediately to Theorem \ref{thm:main_theorem}.
\section{Motivating Example: An Arrangement of $N$ Coupled Pendula with Dihedral Symmetries and Subject to Nonlinear Forcing} \label{sec:n_pendula}
\begin{figure}
    \centering
\includegraphics[width=0.5\linewidth]{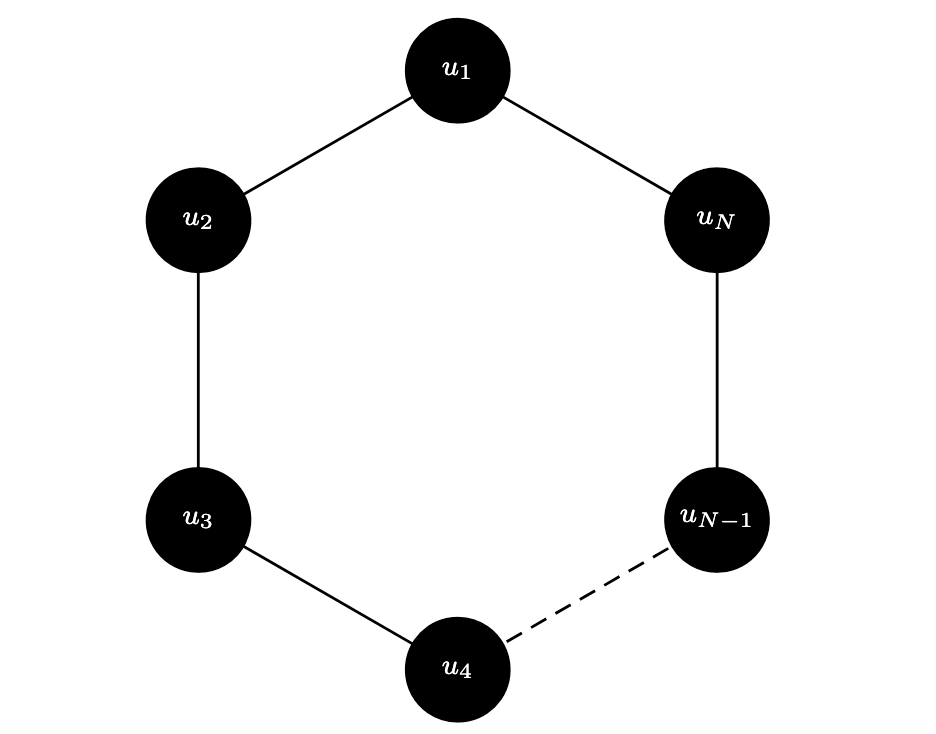}
    \caption{Cycle of $N$ oscillating pendula with $\Gamma=D_{N}$ symmetries}
    \label{fig:dihedral_graph}
\end{figure}
We can provide a more definitive prediction of the symmetries expressed by non-stationary solutions to \eqref{eq:system_periodic} than that established in Theorem \ref{thm:main_theorem} by prescribing an exact coupling relation between the pendula in our configuration. Specifically, let $\Gamma$ be the dihedral group of order $2N$ such that our full symmetry group becomes
\[
G := O(2) \times D_N \times \bz_2.
\]
Our model for this configuration is the system
\begin{align} \label{eq:system_example}
   \begin{cases} 
        \ddot u(t) = |u|^qu - (L + \id)u, \quad t \in \br, \;  u(t) \in V \\
        u(t) = u(t+ p), \; \dot u(t) = \dot u(t+p),
    \end{cases}
\end{align}
where $|u|^qu := (|u|^qu_1, |u|^qu_2, \ldots, |u|^qu_N)$ for any even $q > 1$ and $L:V \rightarrow V$ is the {\it graph Laplacian matrix} for an  undirected graph $\mathbb G$ invariant under the permutation action of $D_N$, with $N$ vertices representing a collection of the same number of oscillating pendula and with edges representing the coupling relations between vertices.
\vs
Since the natural permutation representation of $\Gamma$ on $V$
\begin{align} 
\rho_V: \Gamma \rightarrow GL(V), \quad \rho_V(\sigma)(u_1,u_2,\ldots,u_N) := (u_{\sigma(1)},u_{\sigma(2)},\ldots,u_{\sigma(N)}),
\end{align}
is induced by the permutation actions of the rotation and reflection
generators $\gamma, \kappa \in \Gamma$ on the indices $i \in \{1,\ldots,N\}$ 
\[
\gamma(i):= i+1 \pmod{N} \text{ and } \kappa(i) := N-i \pmod{N},
\]
its character can be determined according to the rule
\begin{align*}
    \chi_V(\gamma) = 0 \; \text{  and  } \; \chi_V(\kappa) = 
    \begin{cases}
        1 & \text{ if } N \text{ is  odd}; \\
        0 & \text{ if } N  \text{ is  even}.
    \end{cases}
\end{align*}
The number and character of the irreducible $\Gamma$ representations also depend on the dihedral order $N$: there are always the {\it trivial representation} $\mathcal V_0 \simeq \br$, on which $\Gamma$ acts trivially, $\lfloor \frac{N+1}{2} \rfloor - 1$ {\it geometric representations} $\mathcal V_j$, each with an action induced by the corresponding matrix representation $\rho_j: \Gamma \rightarrow GL(\bc)$
    \[
\rho_j(\kappa) := \begin{pmatrix}
    1&0 \\
    0&-1
\end{pmatrix}, \quad \rho_j(\gamma):=\begin{pmatrix}
    \cos(\frac{2j\pi}{N}) & -\sin(\frac{2j\pi}{N}) \\
    \sin(\frac{2j\pi}{N}) & \cos(\frac{2j\pi}{N})
\end{pmatrix}, \quad 1 \leq j < \lfloor \frac{N+1}{2} \rfloor,
\]
the {\it sign representation} $\mathcal V_{*} \simeq \br$, with the action
\[
\sigma \cdot v := \sign(\sigma) v, \quad v \in \mathcal V_{*},
\]
and, in the case that $N$ is even, two additional irreducible one-dimensional representations $\mathcal V_{\lfloor \frac{N+1}{2} \rfloor}, \mathcal V_{**} \simeq \br$ equipped, respectively, with the actions
\[
\sigma \cdot v := - \sign(\sigma) v, \quad v \in \mathcal V_{\lfloor \frac{N+1}{2} \rfloor},
\]
and
\[
\rho_{**}(\kappa) = \rho_{**}(\gamma) = -1.
\]
Comparing characters for the irreducible representations of $\Gamma$ with the character of $V$
\begin{table}[h]
\centering
\begin{tabular}{|c|ccccc|}
\hline
conjugacy classes &$e_\Gamma$ & \; & $\kappa$ & \; &$\gamma$\\\hline 
$\chi_0$ &$1$ & \; & $1$ & \; & $1$ \\
$\chi_1$ &$2$ & \; & $0$ & \; & $\cos(\frac{2 \pi}{N})+\cos(\frac{2 \pi}{N})$\\
$\vdots$ & $\vdots$ & \;& $\vdots$ & \; & $\vdots$ \\
$\chi_j$ &$2$ & \; & $0$ & \; & $\cos(\frac{2 j \pi}{N})+\cos(\frac{2 j\pi}{N})$\\
$\vdots$ & $\vdots$ & \;& $\vdots$ & \; & $\vdots$ \\
$\chi_{\lfloor \frac{N+1}{2} \rfloor}$ &$1$ & \; & $-1$ & \; & $1$\\
$\chi_{*}$ &$1$ & \; & $1$ & \; & $-1$\\
$\chi_{**}$ &$1$ & \; & $-1$ & \; & $-1$\\
\hline 
$\chi_V$ &$N$ & \; & $\frac{1}{2}(1-(-1)^N)$ & \; & $0$ \\
\hline 
\end{tabular}
\vs
\caption{Character Table for $D_N$.}
\end{table} \\
one obtains the relation
\begin{align}
    \chi_V = 
    \begin{cases}
        \chi_0 + \chi_1 + \cdots + \chi_{\frac{N+1}{2} - 1} &  \text{if } 2 \nmid N; \\
        \chi_0 + \chi_1 + \cdots + \chi_{\frac{N}{2}-1} + \chi_{\frac{N}{2}} & \text{if } 2 \mid N,
    \end{cases}
\end{align}
implying that $V$ has the $\Gamma$-isotypic decomposition 
\begin{align}
    V = \begin{cases}
        V_0 \oplus V_1 \oplus \cdots \oplus V_{\frac{N+1}{2} - 1} & \text{if } 2 \nmid N; \\
       V_0 \oplus V_1 \oplus \cdots \oplus V_{\frac{N}{2}-1} \oplus V_{\frac{N}{2}} & \text{if } 2 \mid N.
    \end{cases}
\end{align}
For the sake of generality, we adopt the following notation for the set of $\Gamma$-isotypic indices relevant to the $\Gamma$-isotypic decomposition of $V$
\[
\mathfrak J(N) := \begin{cases}
    \{0,1,\ldots, \frac{N+1}{2} - 1\} &  \text{if } 2 \nmid N; \\
    \{0,1,\ldots, \frac{N}{2} - 1, \frac{N}{2}\} &  \text{if } 2 \mid N.
\end{cases}
\]
The graph Laplacian associated with the undirected graph $\mathbb G$ with $N$ vertices invariant under the permutation action of $D_N$ has the form
\[
L=\left(
\begin{array}
[c]{ccccccc}%
-2 & 1 & 0 & \dots & 0 & 0 & 1\\
1 & -2 & 1 & \dots & 0 & 0 & 0\\
0 & 1 & -2 & \dots & 0 & 0 & 0\\
\vdots & \vdots & \vdots & \ddots & \vdots & \vdots & \vdots\\
0 & 0 & 0 & \dots & -2 & 1 & 0\\
0 & 0 & 0 & \dots & 1 & -2 & 1\\
1 & 0 & 0 & \dots & 0 & 1 & -2
\end{array}
\right).
\]
One can verify that such a matrix has the eigenvectors
\[
v_j := (1,\gamma^j,\gamma^{2j},\ldots,\gamma^{(N-1)j})^T, \quad j \in \{0,1,\ldots,\lfloor \frac{N+1}{2} \rfloor - 1\}, \; \gamma := e^{\frac{2\pi i}{N}}
\]
corresponding to the eigenvalues
\[
-z_j := - 2 + \gamma^j + \gamma^{-j} = -4 \sin^2 \left( \frac{\pi j}{N} \right).
\]
And, in the case that $N$ is even, one will find that there is also the eigenpair
\[
v_{\frac{N}{2}} := (1,-1,1\ldots, -1)^T, \quad z_{\frac{N}{2}} = -4.
\]
Since each isotypic component of $V$ has simple isotypic multiplicity, i.e. since
\[
m_j = 1, \quad j \in \mathfrak J(N),
\]
the eigenvalues of $\mathscr A$ (cf. Definition \eqref{def:operator_eigenvalues}) have the form
\[
\mu_{m,j} = \frac{m^2 - \beta^2(z_j + 1)}{1 + m^2}.
\]
Therefore, the condition $\mu_{m,j} < 0$ is equivalent to the condition
\[
4 \sin^2 \left( \frac{\pi j}{N} \right) > \frac{m^2}{\beta^2} + 1
\]
and Theorem \ref{thm:main_theorem} becomes:
\begin{theorem}\label{thm:N_Pendulum}
Take $m > 0$ and let $(H)$ be a maximal element in the isotropy lattice $\Phi_0(G; \mathscr H_{m} \setminus \{0\})$. For the symmetry group $\Gamma = D_N$ and under the assumptions \ref{A0}--\ref{A4}, if the relations
\[
4 \sin^2 \left( \frac{\pi j}{N} \right) > \frac{m^2}{\beta^2} + 1 \text{  and  } 2 \nmid \dim \mathcal V_{m,j}^H,
\]
are satisfied for an odd number of $D_N$ isotypic indices $j \in \mathfrak J(N)$, then there exists a non-stationary solution $u \in \mathscr H \setminus \{0\}$ to the system \eqref{eq:system_example} with an isotropy subgroup $G_u \leq G$ satisfying $(G_u) \geq (H)$. 
\end{theorem}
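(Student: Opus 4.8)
The plan is to recognize \eqref{eq:system_example} as a concrete instance of the abstract system \eqref{eq:system_periodic} and then invoke Theorem \ref{thm:main_theorem} almost verbatim, so that the only genuine work is to (i) verify the structural hypotheses \ref{A0}--\ref{A4} for the concrete nonlinearity $f(u) := |u|^q u$ and coupling matrix $A := -(L + \id)$, and (ii) rewrite the spectral condition $\mu_j < -(m/\beta)^2$ appearing in Theorem \ref{thm:main_theorem} as the trigonometric inequality in the statement.

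First I would check \ref{A1}--\ref{A4}. Since $|u|$ denotes the Euclidean norm of the full vector $u \in V$ and the permutation representation $\rho_V(\sigma)$ merely permutes coordinates, one has $|\rho_V(\sigma)u| = |u|$ and hence $f(\rho_V(\sigma)u) = \rho_V(\sigma) f(u)$, which is \ref{A1}. Oddness \ref{A2} follows from $q$ being even, as $|{-u}|^q(-u) = -|u|^q u$. For \ref{A3}, note $|f(u)| = |u|^{q+1}$, so $|f(u)|/|u| = |u|^q \to 0$ as $u \to 0$ because $q > 1$. For \ref{A4}, compute $f(u) \cdot u = |u|^q \sum_i u_i^2 = |u|^{q+2} > 0$ for every $u \neq 0$, so the Nagumo condition holds with any $M > 0$. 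Finally, for \ref{A0} I would argue that, since the graph $\mathbb G$ is $D_N$-invariant, $L$ is $D_N$-equivariant and hence preserves each $\Gamma$-isotypic component $V_j$; because every $D_N$-irreducible occurring in $V$ is absolutely irreducible of real type and, by the decomposition of $\chi_V$ established above, occurs with multiplicity $m_j = 1$, Schur's Lemma forces $A|_{V_j}$ to be a real scalar multiple $\mu_j\,\id|_{V_j}$ of the identity, the remaining non-resonance requirement $\mu_j \neq -m^2/\beta^2$ being part of the hypothesis.

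Next I would pin down the scalars $\mu_j$ and translate the condition. Diagonalizing the circulant coupling matrix on the eigenvectors $v_j = (1, \gamma^j, \ldots, \gamma^{(N-1)j})^T$ (together with $v_{N/2}$ when $N$ is even) yields the eigenvalues $-z_j$ with $z_j = 4\sin^2(\pi j/N)$, so by \eqref{def:operator_eigenvalues} the restriction $\mathscr A_{m,j}$ carries the single eigenvalue $\mu_{m,j} = (m^2 - \beta^2(z_j + 1))/(1+m^2)$; consequently $\mu_j < -(m/\beta)^2 \iff \mu_{m,j} < 0 \iff 4\sin^2(\pi j/N) > m^2/\beta^2 + 1$, which is the first displayed relation in the statement. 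Since $m_j = 1$ for every $j \in \mathfrak J(N)$, the requirement in Theorem \ref{thm:main_theorem} that the eigenvalues in question have odd isotypic multiplicity is automatically met, so the hypothesis ``$A$ admits an odd number of eigenvalues $\mu_j$ with odd $m_j$ satisfying the two relations'' collapses to ``an odd number of indices $j \in \mathfrak J(N)$ satisfy the two displayed relations.'' Applying Theorem \ref{thm:main_theorem} then produces the asserted non-stationary solution $u \in \mathscr H \setminus \{0\}$ with isotropy $(G_u) \geq (H)$.

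Since the argument is essentially a substitution into an already-established theorem, no step is a serious obstacle; the one point demanding care is the verification of \ref{A0} — confirming that $A = -(L + \id)$ genuinely acts as a scalar on each isotypic component $V_j$ (which rests on the real type / absolute irreducibility of the $D_N$-irreducibles and on the simple multiplicities $m_j = 1$) and extracting those scalars correctly from the circulant structure of $L$, all while keeping straight the bookkeeping of $\mathfrak J(N)$ across the two parities of $N$.
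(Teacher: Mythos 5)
Your proof takes essentially the same route as the paper's: view \eqref{eq:system_example} as a concrete instance of \eqref{eq:system_periodic}, diagonalize the circulant $L$ on the eigenvectors $v_j$, note that each isotypic multiplicity $m_j = 1$ so the odd-multiplicity requirement in Theorem \ref{thm:main_theorem} is automatic, translate $\mu_j < -(m/\beta)^2$ into a trigonometric inequality, and then apply Theorem \ref{thm:main_theorem}. The explicit verification of \ref{A1}--\ref{A4} for $f(u) = |u|^q u$ and the Schur-lemma justification of \ref{A0} are a useful addition to what the paper leaves implicit.

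One point that needs attention, which your write-up inherits from the paper's own exposition: the final equivalence is not arithmetically consistent with the formula that precedes it. From $\mu_{m,j} = \bigl(m^2 - \beta^2(z_j + 1)\bigr)/(1 + m^2)$, the condition $\mu_{m,j} < 0$ reads $z_j > m^2/\beta^2 - 1$, not $z_j > m^2/\beta^2 + 1$ as claimed. Moreover, with $L$ as displayed (diagonal entries $-2$, so $Lv_j = -z_j v_j$) and $A = -(L + \id)$, one in fact obtains $\mu_j = z_j - 1$, hence $\mu_{m,j} = \bigl(m^2 + \beta^2(z_j - 1)\bigr)/(1+m^2)$, and $\mu_{m,j} < 0 \iff z_j < 1 - m^2/\beta^2$ — the inequality reverses direction. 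You reproduce the paper's sign conventions faithfully, but a complete proof should resolve this discrepancy (most plausibly by taking $L$ to be the positive-semidefinite graph Laplacian, with $+2$ on the diagonal, which makes $\mu_j = -(z_j+1)$ and the stated $\mu_{m,j}$ formula correct up to the remaining $\pm 1$ slip) rather than carry it forward silently.
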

 \subsection{The Special Case of $\Gamma = D_8$}\label{sec:special_case}
While the framework described in Section \ref{sec:n_pendula} applies to any number of pendula, we can explicitly identify spatio-temporal symmetries of the non-stationary solutions to \eqref{eq:system_example} by choosing any particular dihedral order. Let's examine the special case of $8$ pendula, such that our full symmetry group becomes
\[
G:= O(2) \times D_8 \times \bz_2.
\]
Since the non-triviality of the coefficient standing next to an orbit type of maximal kind $(H) \in \mathfrak M_m$ in the basic degree $\deg_{\mathcal V_{m,j}} \in A(G)$ is equivalent to the odd-dimensionality of the associated $H$-fixed point space $\mathcal V_{m,j}^H$, the full power of Theorem 
\ref{thm:N_Pendulum} can be demonstrated using a complete list of maximal elements in the isotropy lattice  $\Phi_0(G; \mathscr H_{m} \setminus \{0\})$ for some fixed $m > 0$ together with computations of the relevant basic degrees $\deg_{\mathcal V_{m,j}}$ for $j \in \{ 0,1,2,3,4 \}$. This can be achieved for $m = 1$ (and then extended to any $m \in \bn$ via $s$-folding, cf. Section \ref{sec:sfolding}) by the following GAP code: 
\begin{lstlisting}  [language=GAP, frame=single] 
# A G.A.P. Program for the computation of Maximal Orbit Types and Basic Degrees associated with the G-isotypic decomposition 
# V = V_0 \times V_1 \times V_2 \times V_3 \times V_4
LoadPackage ("EquiDeg"); 
# create groups O(2)xD8xZ2
o2:=OrthogonalGroupOverReal(2);
d8:=pDihedralGroup(8);
z2:=pCyclicGroup(2);
# generate D8xZ2
g1:=DirectProduct(d8,z2);
# set names for subgroup conjugacy classes in D8xZ2
SetCCSsAbbrv(g1,["Z1","Z2","D1t","D1z","D1","Z1m","Z1p","D1zt","D1pt","D1p","D2","Z4","D2t","D2zt","D2d","Z4d","D2dt","D2z","Z2p","Z4p","D4dt","D2p","D4","D2pt","D4z","D4zt","Z8","Z8d","D4t","D4d","D4p","Z8p","D8","D4pt","D8d","D8z","D8dt","D8p"]);
# generate O(2)xD8xZ2
G:=DirectProduct(o2,g1);
ccs:=ConjugacyClassesSubgroups(G);
# find Maximal Orbit Types and Basic Degrees in first O(2)-isotypic component (these characterize the maximal orbit types in all O(2)-isotypic components via s-folding)
irrs := Irr(G); 
# The D_8-isotypic components 0,1,2,3,4 are indexed in GAP as 1,9,11,13,7. 
#Looping through these indices we obtain the desired data as follows:
for i in [1,9,11,13,7] do
    PrintFormatted( "\n Computing Maximal Orbit Types in M_1,{} \n", i );
   # The maximal orbit types in the isotypic component H_{m,j} and the corresponding basic degrees are obtained via the commands MaximalOrbitTypes( irrs[m,j] ); and BasicDegree( irrs[m,j] );
    max_orbtyps := MaximalOrbitTypes( irrs[1,i] );
    basic_deg := BasicDegree( irrs[1,i] );
    View(basic_deg);
    Print("\n");
    View(max_orbtyps);
od;
Print( "Done!\n" );
\end{lstlisting}
In the GAP package EquiDeg, developed for equivariant degree computations in the Burnside ring
(cf. \cite{GAP}), the conjugacy class of an amalgamated 
subgroup (cf. Section \ref{sec:generator_classification})
\[
H = K_O {}^{\varphi_O}\times^{\varphi_\Gamma}_{L} K_\Gamma,
\]
is identified by the quadruple $(K_1,K_2,Z_1,Z_2)$ where $Z_1:= \ker \varphi_1$ and $Z_2:= \ker \varphi_2$ with the notation
\begin{align*}
%\label{not_amal_2} 
(H) =: 
(K_1 {}^{Z_1}\times^{Z_2} K_2).
\end{align*}
Using this modified amalgamated notation, we can express the output of the above program, i.e. the maximal elements in the isotropy lattices $\Phi_0(G; \mathscr H_{m,j} \setminus \{0\})$ for $j \in \{0,1,2,3,4\}$ and the corresponding basic degrees as follows:
\begin{align*}
    \mathfrak M_{m,0} &= \left\{ (D_m \times D_8^p) \right\}; \\
    \mathfrak M_{m,1} &= \left\{ (D_{4m} {}^{\bz_m}\times^{\bz_4^d} D_8^p ), \; (D_{2m} {}^{D_m}\times^{\tilde D_4^d} \tilde D_4^p ), \; (D_{2m} {}^{D_m}\times^{D_4^d} D_4^p )\right\}; \\
    \mathfrak M_{m,2} &= \left\{ (D_{2m} {}^{D_m}\times^{D_2^d} D_2^p ), \; (D_{2m} {}^{D_m}\times^{\tilde D_2^d} \tilde D_2^p ), \; (D_{8m} {}^{
    \bz_m}\times^{\bz_2^-} D_8^p )\right\}; \\
    \mathfrak M_{m,3} &= \left\{ (D_{2m} {}^{D_m}\times^{D_1^p} D_2^p ), \; (D_{2m} {}^{D_m}\times^{\tilde D_1^p} \tilde D_2^p ), \; (D_{8m} {}^{
    \bz_m}\times^{\bz_1^p} D_8^p )\right\}; \\
    \mathfrak M_{m,4} &= \left\{ (D_{2m} {}^{D_m}\times^{\tilde D_4^p} D_8^p ) \right\}, \\
\end{align*}
and 
\begin{align*}
    \deg_{\mathcal V_{m,0}} &= (G) - (D_m \times D_8^p); \\
    \deg_{\mathcal V_{1,1}} &= (G) + 2(D_{2m} {}^{\bz_m}\times^{\bz_4^d} \tilde D_4^p ) + 2(D_{2m} {}^{\bz_m}\times^{\bz_4^d} D_4^p ) + (D_{2m} {}^{D_m}\times^{\bz_4^d} \bz_4^p ) \\
    & \quad \quad -2 (D_{4m} {}^{\bz_m}\times^{\bz_4^d} D_8^p ) - (D_{2m} {}^{D_m}\times^{\tilde D_4^d} \tilde D_4^p ) - (D_{2m} {}^{D_m}\times^{ D_4^d} D_4^p ); \\
    \deg_{\mathcal V_{1,2}} &= (G) + 2(D_{2m} {}^{\bz_m}\times^{\bz_2^-} D_2^p ) + 2(D_{2m} {}^{\bz_m}\times^{\bz_2^-} \tilde D_2^p ) + (D_{2m} {}^{D_m}\times^{\bz_2^-} \bz_2^p ) \\
    & \quad \quad - (D_{2m} {}^{D_m}\times^{D_2^d} D_2^p ) - (D_{2m} {}^{D_m}\times^{\tilde D_2^d} \tilde D_2^p ) - 2(D_{8m} {}^{\bz_m}\times^{} D_8^p ); \\
    \deg_{\mathcal V_{1,3}} &= (G) + 2(D_{2m} {}^{\bz_m}\times^{\bz_1^p} D_2^p ) + 2(D_{2m} {}^{\bz_m}\times^{\bz_1^p} \tilde D_2^p ) + (D_{2m} {}^{D_m}\times^{\bz_1^p} \bz_2^p ) \\
    & \quad \quad - (D_{2m} {}^{D_m}\times^{D_1^p} D_2^p ) - (D_{2m} {}^{D_m}\times^{\tilde D_1^d} \tilde D_2^p ) - 2(D_{8m} {}^{\bz_m}\times^{\bz_1^p} D_8^p ); \\
    \deg_{\mathcal V_{m,0}} &= (G) - (D_{2m} {}^{D_m}\times^{\tilde D_4^p} D_8^p ).
\end{align*}
In addition to the notation defined above, we have adopted the following ancillary shorthand for identification of subgroups in $D_8$ generated by the rotation 
$\gamma := e^{\frac{2 \pi i}{8}}$ and the relection $\kappa$:
\begin{align*}
    & \bz_1 := \{1\}, \quad \bz_2:= \{1, \gamma^4\}, \quad \bz_4 := \{1, \gamma^2, \gamma^4, \gamma^6\}, \\
    & D_1 := \{1, \kappa\}, \quad \tilde D_1 := \{1, \kappa \gamma\}, \quad D_2:= \{1, \gamma^4, \kappa, \kappa \gamma^4 \}, \\
    & \tilde D_2:= \{1, \gamma^4, \kappa \gamma, \kappa \gamma^5\}, \quad D_4:= \{1, \gamma^2, \gamma^4, \gamma^6,  \kappa, \kappa \gamma^2, \kappa \gamma^4, \kappa \gamma^6\}, \\
    & \tilde D_4 := \{1, \gamma^2, \gamma^4, \gamma^6,  \kappa \gamma, \kappa \gamma^3, \kappa \gamma^5, \kappa \gamma^7\}, \\
    & D_8 := \{1, \gamma, \gamma^2, \gamma^3, \gamma^4, \gamma^5, \gamma^6, \gamma^7, \kappa, \kappa \gamma, \kappa \gamma^2, \kappa \gamma^3, \kappa \gamma^4, \kappa \gamma^5, \kappa \gamma^6, \kappa \gamma^7 \}
\end{align*}
Likewise, the reader can consult the following list of shorthand to identify the subgroups in $D_8 \times \mathbb{Z}_2$: 
\begin{align*}
& \bz_1^p =\{(1,1),(1,-1)\}, \quad \bz_2^-=\{(1,1),(\gamma^4,-1)\}, \quad \bz_4^d := \{(1,1), (\gamma^2,-1), (\gamma^4,1), (\gamma^6,-1)\}, \\
& D_1^p := \{(1,1), (\kappa,1), (1,-1), (\kappa,-1) \}, \quad \tilde D_1^p := \{(1,1), (\kappa \gamma ,1), (1,-1), (\kappa \gamma,-1) \}, \\
& D_2^d := \{ (1,1), (\gamma^4,-1), (\kappa,1), (\kappa \gamma^4,-1) \}, \quad \tilde D_2^d := \{ (1,1), (\gamma^4,-1), (\kappa \gamma,1), (\kappa \gamma^5,-1) \}, \\
& D_2^p := \{ (1,1), (\gamma^4,1), (\kappa,1), (\kappa \gamma^4,1), (1,-1), (\gamma^4,-1), (\kappa,-1), (\kappa \gamma^4,-1) \}, \\
& \tilde D_2^p := \{ (1,1), (\gamma^4,1), (\kappa \gamma,1), (\kappa \gamma^5,1), (1,-1), (\gamma^4,-1), (\kappa \gamma,-1), (\kappa \gamma^5,-1) \}, \\
& D_4^d := \{ (1,1), (\gamma^2,-1), (\gamma^4,1), (\gamma^6,-1), (\kappa,1), (\kappa \gamma^2,-1), (\kappa\gamma^4,1), (\kappa \gamma^6,-1) \} \\
& \tilde D_4^d := \{ (1,1), (\gamma^2,-1), (\gamma^4,1), (\gamma^6,-1), (\kappa \gamma,1), (\kappa \gamma^3,-1), (\kappa\gamma^5,1), (\kappa \gamma^7,-1) \} \\
& \tilde D_4^p := \{ (1,1), (\gamma^2,1), (\gamma^4,1), (\gamma^6,1), (\kappa \gamma,1), (\kappa \gamma^3,1), (\kappa\gamma^5,1), (\kappa \gamma^7,1), \\
& \quad \quad \quad \quad 
 (1,-1), (\gamma^2,-1), (\gamma^4,-1), (\gamma^6,-1), (\kappa \gamma,-1), (\kappa \gamma^3,-1), (\kappa\gamma^5,-1), (\kappa \gamma^7,-1), \} \\
& D_8^p := \{(1,1), (\gamma,1), (\gamma^2,1), (\gamma^3,1), (\gamma^4,1), (\gamma^5,1), (\gamma^6,1), (\gamma^7,1), (\kappa,1), (\kappa \gamma,1), (\kappa \gamma^2,1),  \\
& \quad \quad \quad \quad  (\kappa \gamma^3,1), (\kappa \gamma^4,1), (\kappa \gamma^5,1), (\kappa \gamma^6,1), (\kappa \gamma^7,1), (1,-1), (\gamma,-1), (\gamma^2,-1), (\gamma^3,-1),  \\
&\quad \quad \quad \quad  
 (\gamma^6,-1), (\gamma^7,-1), (\kappa,-1), (\kappa \gamma,-1), (\kappa \gamma^2,-1), (\kappa \gamma^3,-1), (\kappa \gamma^4,-1), (\kappa \gamma^5,-1), \\
& \quad \quad \quad \quad  (\kappa \gamma^6,-1),(\gamma^4,-1), (\gamma^5,-1), (\kappa \gamma^7,-1) \}
\end{align*}
At this point, Proposition \ref{prop:special_case} follows directly from the observations $(\rm i)$ that the fixed point set $\mathcal V_{m,j}^H$ is odd dimensional for any $(H)\in \mathfrak M_{m,j}$ and $(\rm ii)$ that the sets $\mathfrak M_{m,j}$, $m > 0$, $ j \in \{0,1,2,3,4\}$ are pairwise disjoint.
 
\appendix
\section{The $G$-Equivariant Leray-Schauder Degree} \label{sec:appendix}
Given an isometric Banach $G$-representation $\mathscr H$ of functions taking values in an orthogonal $\Gamma$-representation $V$, the $G$-equivariant Leray-Schauder degree is a topological tool used to study the solution sets associated with equations of the form
\begin{align} \label{eq:1}
    \mathscr F(u) = 0, \quad u \in \mathscr H,
\end{align}
where $\mathscr F: \mathscr H \rightarrow \mathscr H$ is an operator satisfying the following conditions:
\begin{enumerate}[label=($B_\arabic*$)]
\item\label{c1} $\mathscr F$ is a $G$-equivariant completely continuous field;
\item\label{c2} there exists a sufficiently large $R > 0$ such that $\mathscr F$ is $B_R(0)$-admissibly $G$-homotopic to the identity operator $\operatorname{Id}: \mathscr H \rightarrow \mathscr H$;
\item\label{c3} $D \mathscr F(0): \mathscr H \rightarrow \mathscr H$ exists and the operator $\operatorname{Id} - D \mathscr F(0): \mathscr H \rightarrow \mathscr H$ is a $G$-equivariant compact field;
\item\label{c4} if $D \mathscr F(0): \mathscr H \rightarrow \mathscr H$ is an isomorphism, there exists a sufficiently small $\epsilon > 0$
such that $\mathscr F$ is $B_\epsilon(0)$-admissibly $G$-homotopic to $D \mathscr F(0)$.
\end{enumerate}
Condition \ref{c1} allows the problem of the existence of solutions for equation \eqref{eq:1} to be reformulated as a question concerning the non-triviality of the degree $\gdeg(\mathscr F, B(\mathscr H))$. In turn, Conditions \ref{c2}-\ref{c4} reduce calculation of $\gdeg(\mathscr F, B(\mathscr H))$ to a Burnside ring product involving a finite number of computationally simpler $G$-basic degrees.
\vs
\noi{\bf  Equivariant notation.}
Let $G$ be a compact Lie group. For any subgroup  $H \leq G$, we denote by $(H)$ its conjugacy class,
by $N(H)$ its normalizer by $W(H):=N(H)/H$ its Weyl group in $G$. The set of all subgroup conjugacy classes in $G$ is denoted by $\Phi(G):=\{(H): H\le G\}$ and has a natural partial order defined as follows
\[
(H)\leq (K) \iff \exists_{ g\in G}\;\;gHg^{-1}\leq K.
\]
As is possible with any partially ordered set, we extend the natural order over $\Phi(G)$ to a total order, which we indicate by $<$ to differentiate the two relations. Moreover, we put $\Phi_0 (G):= \{ (H) \in \Phi(G) : \text{$W(H)$  is finite}\}$ and, for any $(H),(K) \in \Phi_0(G)$, we denote by $n(H,K)$ the number of subgroups $\tilde K \leq G$ with $\tilde K \in (K)$ and $H \leq \tilde K$.
\vs
Given a $G$-space $X$ with an element $x \in X$, we denote by
$G_{x} :=\{g\in G:gx=x\}$ the {\it isotropy group} of $x$
and we call $(G_{x}) \in \Phi(G)$  the {\it orbit type} of $x \in X$. Put $\Phi(G,X) := \{(H) \in \Phi_0(G)  : 
(H) = (G_x) \; \text{for some $x \in X$}\}$ and  $\Phi_0(G,X):= \Phi(G,X) \cap \Phi_0(G)$. For a subgroup $H\leq G$, the subspace $
X^{H} :=\{x\in X:G_{x}\geq H\}$ is called the {\it $H$-fixed-point subspace} of $X$. If $Y$ is another $G$-space, then a continuous map $f : X \to Y$ is said to be {\it $G$-equivariant} if $f(gx) = gf(x)$ for each $x \in X$ and $g \in G$.
\vs
\noi{\bf The Burnside ring.}
The free $\mathbb{Z}$-module $A(G) := \mathbb{Z}[\Phi_0(G)]$ has a natural ring structure when equipped with the multiplicative operation defined, for any pair of generators $(H),(K) \in \Phi_0(G)$, as follows
\begin{align*} 
%\label{def:burnside_product}
    (H) \cdot (K) := \sum\limits_{(L) \in \Phi_0(G)} n_L(L), 
\end{align*}
and where the coefficients $n_L \in \mathbb{Z}$ are given by the recurrence formula
\begin{align*} %\label{def:recurrence_formula_coefficients_burnside_product}
    n_L := \frac{n(L,H) |W(H)| n(L,K) |W(K)| - \sum_{(\tilde L) > (L)} n_{\tilde L} n(L,\tilde L) |W(\tilde L)|}{|W(L)|}.
\end{align*}
Any {\it Burnside ring} element $a \in A(G)$ can be expressed as a formal sum over some finite number of generator elements  
\[
a = n_1(H_1) + n_2(H_2) + \cdots + n_N(H_N),
\]
and we use the notation
\[
\operatorname{coeff}^H(a) = n_H,
\]
to specify the integer coefficient standing next to the generator element $(H) \in \Phi_0(G)$.
\vs
\noi{\bf An Axiomatic Construction of the $G$-equivariant Leray-Schauder-Equivariant degree.}
Let $\mathscr E$ be any isometric Banach $G$-representation. A map $f: \mathscr E \rightarrow \mathscr E$ is said to be a completely continuous $G$-equivariant field if it can be expressed in the form
\[
f(x) = x - F(x),
\]
for some compact $G$-equivariant map $F: \mathscr E \rightarrow \mathscr E$. Moreover, given an open bounded $G$-invariant set $\Om \subset \mathscr E$, a completely continuous $G$-equivariant field is said to be $\Om$-admissible and the pair $(f,\Om)$ is called an admissible $G$-pair if $f(x) \neq 0$ for all $x \in \partial \Om$. We denote by $\mathscr M^G(\mathscr E)$ the set of all admissible $G$-pairs in $\mathscr E$ and by $\mathscr M^G$ the set of all admissible $G$-pairs defined by taking a union over all isometric Banach $G$-representations as follows
\[
\mathscr M^G := \bigcup_{\mathscr E} \mathscr M^G(\mathscr E).
\]
The $G$-equivariant Leray-Schauder degree is defined as the unique map $\gdeg: \mathscr M^G \rightarrow A(G)$ that assigns to every admissible $G$-pair $(f,\Omega)$ the Burnside ring element
	\begin{align}
		\label{eq:G-deg0}\gdeg(f,\Omega)=\sum_{(H) \in \Phi_0(G)}%
		{n_{H}(H)},
	\end{align}
satisfying the four degree axioms
	\begin{itemize}
		\item[] \textbf{(Existence)} If  $n_{H} \neq0$ for some $(H) \in \Phi_0(G)$ in \eqref{eq:G-deg0}, then there
		exists $x\in\Omega$ such that $f(x)=0$ and $(G_{x})\geq(H)$.

		\item[] \textbf{(Additivity)} 
  For any two  disjoint open $G$-invariant subsets
  $\Omega_{1}$ and $\Omega_{2}$ with
		$f^{-1}(0)\cap\Omega\subset\Omega_{1}\cup\Omega_{2}$, one has
		\begin{align*}
			\gdeg(f,\Omega)=\gdeg(f,\Omega_{1})+\gdeg
			(f,\Omega_{2}).
		\end{align*}

		\item[] \textbf{(Homotopy)} For any 
  $\Omega$-admissible $G$-homotopy, $h:[0,1]\times V\to V$, one has
		\begin{align*}
			\gdeg(h_{t},\Omega)=\mathrm{constant}.
		\end{align*}

		\item[] \textbf{(Normalization)}
  For any open bounded neighborhood of the origin in an isometric Banach $G$-representation $\mathscr E$ with the identity operator $\id:\mathscr E \rightarrow \mathscr E$, one has
		\begin{align*}
			\gdeg(\id,\Omega)=(G).
		\end{align*}
	\end{itemize}
The following are two additional properties of the map $\gdeg$ which can be derived from the four axiomatic properties defined above (cf. \cite{AED}, \cite{book-new}):		
\begin{itemize}
		\item[] {\textbf{(Multiplicativity)}} For any $(f_{1},\Omega
		_{1}),(f_{2},\Omega_{2})\in\mathcal{M} ^{G}$,
		\begin{align*}
			\gdeg(f_{1}\times f_{2},\Omega_{1}\times\Omega_{2})=
			\gdeg(f_{1},\Omega_{1})\cdot \gdeg(f_{2},\Omega_{2}),
		\end{align*}
		where the multiplication `$\cdot$' is taken in the Burnside ring $A(G )$.

		\item[] \textbf{(Recurrence Formula)} For an admissible $G$-pair
		$(f,\Omega) \in \mathscr M^G(\mathscr E)$, the $G$-degree \eqref{eq:G-deg0} can be computed using the
		following Recurrence Formula:
		\begin{equation}
			\label{eq:RF-0}n_{H}=\frac{\deg(f^{H},\Omega^{H})- \sum_{(K)>(H)}{n_{K}\,
					n(H,K)\, \left|  W(K)\right|  }}{\left|  W(H)\right|  },
		\end{equation}
		where $\left|  X\right|  $ stands for the number of elements in the set $X$
		and $\deg(f^{H},\Omega^{H})$ is the Brouwer degree of the map $f^{H}%
		:=f|_{\mathscr E^{H}}$ on the set $\Omega^{H}\subset \mathscr E^{H}$.
	\end{itemize}
\noi{\bf Computation of the $G$-equivariant Leray-Schauder degree.} 
Let's denote by $\{ \mathcal U_i \}_{i \in \mathbb{N}}$ the set of all irreducible $G$-representations and define the $i$-th basic degree as follows:
\begin{align*}
\deg_{\mathcal{U}_{i}}:=\gdeg(-\id,B(\mathcal{U} _{i})).
\end{align*} 
Given any isometric Banach $G$-representation with a $G$-isotypic decomposition
\[
\mathscr E = \bigoplus_{i \in \mathbb{N}} \mathscr E_i,
\]
and any bounded $G$-equivariant linear isomorphism $T:\mathscr E\to \mathscr E$, the Multiplicativity and Homotopy properties of the $G$-equivariant Leray-Schauder degree, together with Schur's Lemma implies
\begin{align*}
%\label{eq:prod-prop}
  \gdeg(\id - T,B(\mathscr E))=\prod_{i \in \mathbb{N}} \gdeg
	(T_{i},B(\mathscr E_{i}))= \prod_{i \in \mathbb{N}}\prod_{\mu\in\sigma_{-}(T)} \left(
	\deg_{\mathcal{U} _{i}}\right)  ^{m_{i}(\mu)}%  
\end{align*}
where $T_{i}=\id - T|_{\mathscr E_{i}}: \mathscr E_i \rightarrow \mathscr E_i$ and $\sigma_{-}(T)$ denotes the real negative
spectrum of $T$. \vskip.3cm

Notice that each of the basic degrees: 
\begin{align*}
	\deg_{\mathcal{U} _{i}}=\sum_{(H) \in \Phi_0(G)}n_{H}(H),
\end{align*}
can be practically computed, using the recurrence formula  \eqref{eq:RF-0}, as follows:
\begin{align*}
n_{H}=\frac{(-1)^{\dim\mathcal{U} _{i}^{H}}- \sum_{H<K}{n_{K}\, n(H,K)\, \left|  W(K)\right|  }}{\left|  W(H)\right|  }.
\end{align*}
\vs
The following fact is well-known (see for example \cite{survey}).
\begin{lemma}
    For any irreducible $G$-representation $\mathcal U$, the basic degree $\deg_{\mathcal U} \in A(G)$ is an involutive element of the Burnside ring, i.e.
    \[
    (\deg_{\cU})^2=\deg_{\cU} \cdot \deg_{\cU}=(G).
    \]
\end{lemma}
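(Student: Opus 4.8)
The plan is to reduce the claim to the Multiplicativity, Homotopy and Normalization properties of $\gdeg$ recorded above. First I would invoke Multiplicativity to write
\[
\deg_{\cU} \cdot \deg_{\cU} = \gdeg\big((-\id)\times(-\id),\, B(\cU)\times B(\cU)\big) = \gdeg\big(-\id,\, \Omega\big),
\]
where $\Omega := B(\cU)\times B(\cU)$ is regarded as an open bounded $G$-invariant neighborhood of the origin in $\cU\oplus\cU$ equipped with the diagonal $G$-action, and we have used that $(-\id)\times(-\id)$ is literally $-\id$ on the direct sum.

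Next I would exhibit an $\Omega$-admissible $G$-homotopy joining $-\id$ to $\id$ on $\cU\oplus\cU$. Take
\[
h_t(x,y) := \big(\cos(\pi t)\,x - \sin(\pi t)\,y,\ \sin(\pi t)\,x + \cos(\pi t)\,y\big), \qquad t\in[0,1].
\]
Since $G$ acts diagonally on $\cU\oplus\cU$, each $h_t$ is $G$-equivariant; since $h_t$ is the planar rotation by angle $\pi t$ applied blockwise, it is a linear isomorphism, so $h_t(x,y)=0$ forces $(x,y)=0$, and in particular $h_t$ is nonzero on $\partial\Omega$ for every $t$. Moreover $h_0 = \id$ and $h_1 = -\id$. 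Hence $(h_t,\Omega)$ is an admissible $G$-homotopy, and the Homotopy axiom gives $\gdeg(-\id,\Omega) = \gdeg(\id,\Omega)$, while the Normalization axiom gives $\gdeg(\id,\Omega) = (G)$ because $\Omega$ is an open bounded $G$-neighborhood of the origin. Chaining the three identities yields $(\deg_{\cU})^2 = (G)$.

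I do not expect a genuine obstacle here: the only points needing care are that the homotopy remains within $G$-equivariant isomorphisms (immediate, since $h_t$ is a scalar combination of the identity and a fixed $G$-equivariant map, with $\det h_t = 1$) and that the product set $B(\cU)\times B(\cU)$, although not literally a ball, is an admissible domain for the Normalization axiom — which as stated applies to \emph{any} open bounded neighborhood of the origin — so no excision or rescaling step is actually required (one could alternatively deform $\Omega$ to $B(\cU\oplus\cU)$ through admissible data, but this is unnecessary). Finally, I would note that the same conclusion can be read off directly from the Recurrence Formula, since $\dim(\cU\oplus\cU)^H = 2\dim\cU^H$ is even for every $H\le G$, forcing every coefficient of $\deg_{\cU}\cdot\deg_{\cU}$ other than that of $(G)$ to vanish and that of $(G)$ to equal $1$; the homotopy argument is cleaner, but this provides an independent check.
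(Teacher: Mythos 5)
Your proof is correct. Note, though, that the paper does not actually prove this lemma: it simply records it as ``well-known'' and cites \cite{survey}, so there is no in-paper argument to compare against. Your argument is a clean, self-contained justification: the rotation homotopy $h_t$ on $\cU\oplus\cU$ is $G$-equivariant because the diagonal $G$-action commutes with any block-scalar linear map, each $h_t$ is a linear isomorphism so the homotopy never vanishes on $\partial\Omega$, and since $\cU$ is finite-dimensional every continuous map is automatically a completely continuous field, so all the admissibility hypotheses of the Homotopy axiom are satisfied; combined with Multiplicativity and Normalization this yields $(\deg_{\cU})^2=(G)$. Your closing remark via the Recurrence Formula is also valid and is arguably the more standard route in the literature: since $\dim(\cU\oplus\cU)^H=2\dim\cU^H$ is even for every $H\le G$, the Brouwer degree $\deg\bigl((-\id)^H,B(\cU\oplus\cU)^H\bigr)$ equals $1$ for all $H$, so $n_G=1$ and then a downward induction over $\Phi_0(G)$ forces every other coefficient to vanish. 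Either argument suffices; the homotopy one is more conceptual, the recurrence one is closer to how the basic degrees are actually computed in the rest of the paper.
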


\newpage
 

\begin{thebibliography}{99}

\normalsize
\baselineskip=17pt
\bibitem{Amann}  H. Amann,  E. Zehnder, {\it Nontrivial solutions for a class of nonresonance problems and applications to nonlinear differential equations}, Annali della Scuola Normale Superiore di Pisa - Classe di Scienze, Serie 4, Vol. 7 (1980) No. 4, pp. 539-603. 

\bibitem{Amster} P. Amster, J. Haddad, {\it A Hartman-Nagumo type condition for a class of contractible domains}, 
Topological Methods in Nonlinear Analysis, Vol. 41 (2013) No. 2, pp. 287–304.

\bibitem{BalBurnett} Z. Balanov,  J.V. Burnett, W. Krawcewicz, H. Xiao,  {\it Global Bifurcation of Periodic Solutions in Symmetric Reversible Second Order Systems with Delays}, International Journal of Bifurcation and Chaos, Vol. 31 (2021) No. 12.

\bibitem{BalChen}Z. Balanov, F. Chen, J. Guo,  W. Krawcewicz,  {\it Periodic solutions to reversible second order autonomous systems with commensurate delays}, Topological Methods in Nonlinear Analysis, Vol. 59 (2021) No. 2A, pp. 475–498.

\bibitem{survey}  Z. Balanov, W. Krawcewicz,   S. Rybicki, H. Steinlein,  {\it A short treatise on the equivariant degree theory and its applications}, Journal of Fixed Point Theory and Applications, Vol. 8 (2010), pp. 1-74, https://doi.org/10.1007/s11784-010-0033-9.

\bibitem{book-new} Z. Balanov, W. Krawcewicz,  D. Rachinskii, J. Yu, H-P. Wu, {\it Degree Theory and Symmetric Equations Assisted by GAP System: With a Special Focus on Systems with Hysteresis}, AMS Mathematical Surveys and Monographs, Vol. 286 (2025).

\bibitem{AED} Z. Balanov, W. Krawcewicz, H. Steinlein, {\it Applied Equivariant Degree}, AIMS Series on Differential Equations \& Dynamical Systems, Vol. 1 (2006).

\bibitem{Bebernes} J.W. Bebernes, K. Schmitt,  {\it Periodic boundary value problems for systems of second order differential equations}, Journal of Differential Equations, Vol. 13 (1973) Issue 1, pp. 32–47, https://doi.org/10.1016/0022-0396(73)90030-2. 

\bibitem{Chang} K.C. Chang, {\it Solutions of asymptotically linear operator equations via Morse theory}, Communications on Pure and Applied Mathematics, Vol. 34 (1981) Issue 5, pp. 693–712, https://doi.org/10.1002/cpa.3160340503.

\bibitem{Duan} Y. Duan,  W. Krawcewicz, H. Xiao, {\it Periodic solutions in reversible systems in second order systems with distributed delays}, AIMS Mathematics, Vol. 9 (2024) Issue 4, pp. 8461–8475, https://doi.org/10.3934/math.2024411.

\bibitem{Eze} I. Eze, C. García-Azpeitia,  W. Krawcewicz,  Y. Lv,  {\it Subharmonic solutions in reversible non-autonomous differential equations}, Nonlinear Analysis, Vol. 216 (2022), pp. 112675–112675, https://doi.org/10.1016/j.na.2021.112675.

\bibitem{Carlos} C. García-Azpeitia,  W. Krawcewicz, Y. Lv,  {\it Solutions of fixed period in the nonlinear wave equation on networks}, Nonlinear Differential Equations and Applications NoDEA, Vol. 26 (2019) No. 4, https://doi.org/10.1007/s00030-019-0568-4.

\bibitem{Gaines} R. Gaines, J. Mahwin,  {\it Coincidence degree and nonlinear differential equations}, Lecture Notes in Mathematics, Vol. 568 (1977), Springer, Berlin-New York.

\bibitem{Ghanem} Z. Ghanem,  C. Crane,  J. Liu,  {\it Global bifurcation of non-radial solutions for symmetric sub-linear elliptic systems on the planar unit disc}, Journal of Fixed Point Theory and Applications, Vol. 26 (2024) No. 4, https://doi.org/10.1007/s11784-024-01133-8.

\bibitem{Goursat} E. Goursat, {\it Sur les substitutions orthogonales et les divisions régulières de l'espace}, Annales Scientifiques de l'École Normale Supérieure, Vol. 6 (1889) Series 3, pp. 9-102.

\bibitem{Hartman}  P. Hartman, {\it Ordinary Differential Equations}, John Wiley \& Sons, 1964.
    
\bibitem{Knobloch} H. Knobloch, {\it On the existence of periodic solutions of second order vector differential
equations}, J. Differential Equations, Vol. 9 (1970), pp. 67-85.

\bibitem{Krawcewicz} W. Krawcewicz, H. Wu, S. Yu, {\it Periodic solutions in reversible second order autonomous systems with symmetries}, Journal of Nonlinear and Convex Analysis, Vol. 18 (2017) No. 8, pp. 1393-1419.

\bibitem{Mahwin1} J. Mawhin, {\it An extension of a theorem of A. C. Lazer on forced nonlinear oscillations}, Journal of Mathematical Analysis and Applications, 
Vol. 40 (1972) Issue 1, pp. 20–29, https://doi.org/10.1016/0022-247x(72)90025-x.

\bibitem{Mahwin2} J. Mawhin, {\it Topological Degree Methods in Nonlinear Boundary Value Problems}, American Mathematical Society: CBMS Regional Conference Series in Mathematics,
Vol. 40 (1979). 

\bibitem{Mahwin3} J. Mawhin, {\it The forced pendulum: a paradigm for nonlinear analysis and dynamical systems}, Exposition Math, Vol. 6 (1988), pp. 271-287. 

\bibitem{Mahwin4} J. Mawhin, {\it Boundary value problems for nonlinear second-order vector differential equations}, Journal of Differential Equations, Vol. 16 (1974) Issue 2, pp. 257–269, https://doi.org/10.1016/0022-0396(74)90013-8.

\bibitem{Mahwin5} J. Mawhin, {\it Seventy-five years of global analysis around the forced pendulum equation},  Proceedings of Equadiff, Vol. 9 (1997), pp. 115-145, Masaryk University(Brno).

\bibitem{Lauter} R. Lauterbach, P. Matthews,  {\it Do absolutely irreducible group actions have odd dimensional fixed point spaces?} ArXiv (Cornell University), (2010), https://doi.org/10.48550/arxiv.1011.3986.

\bibitem{GAP} H-P. Wu, GAP system package EquiDeg for the computations of the Burnside ring and the equivariant
degree https://github.com/psistwu/equideg.



\end{thebibliography}
\end{document}